\documentclass[12pt]{amsart}
\usepackage[pagewise]{lineno}
\usepackage{amssymb,amsthm,amsmath}
\usepackage{mathtools}
\usepackage{amsfonts}
\usepackage{amscd}
\usepackage{amssymb}

\usepackage{enumerate}
\usepackage{bbm}
\usepackage{graphicx}
\allowdisplaybreaks
\usepackage{mathabx}
\usepackage{color}
\usepackage{amsbsy}
\usepackage{graphicx}
\usepackage{amsthm}
\usepackage{amsmath}
\usepackage{amsxtra}
\usepackage{mathrsfs}
\usepackage{bbm}
\usepackage{dsfont}
\usepackage{enumitem}
\usepackage{comment}
\usepackage{relsize}
\usepackage{enumerate}
\usepackage{xcolor}
\usepackage{float} 
\usepackage{url}
\usepackage{soul}
\usepackage[hidelinks]{hyperref} 
\usepackage{ifthen}

\newtheorem{theorem}{Theorem}[section]
\newtheorem{lemma}[theorem]{Lemma}
\newtheorem{corollary}[theorem]{Corollary}

\theoremstyle{definition}

\newtheorem{remark} [theorem]{Remark}

\theoremstyle{remark}

\numberwithin{equation}{section}

\newcommand{\C}{\mathbb{C}}

\newcommand{\R}{\mathbb{R}}

 \newcommand{\what}{\widehat}

\newcommand{\beas}{\begin{eqnarray*}}
\newcommand{\eeas}{\end{eqnarray*}}
\newcommand{\bes} {\begin{equation*}}
\newcommand{\ees} {\end{equation*}}
\newcommand{\be} {\begin{equation}}
\newcommand{\ee} {\end{equation}}
\newcommand{\bea} {\begin{eqnarray}}
\newcommand{\eea} {\end{eqnarray}}

\let\oldproofname=\proofname
\renewcommand{\proofname}{\rm\bf{\oldproofname}}

\newcommand{\ol}{\overline}
\newcommand{\mf}{\mathfrak}

\title[Spherical maximal operators]{Spherical maximal operators on rank one Riemannian symmetric spaces of noncompact type}
\author{Subir Dakshi and Sanjoy Pusti}

\address{Subir Dakshi \endgraf Department of Mathematics, \endgraf INDIAN INSTITUTE OF TECHNOLOGY BOMBAY, \endgraf Powai, Mumbai-400076, India.}
\email{subirdakshi@iitb.ac.in }
\address{Sanjoy Pusti \endgraf Department of Mathematics, \endgraf INDIAN INSTITUTE OF TECHNOLOGY BOMBAY, \endgraf Powai, Mumbai-400076, India.}
\email{sanjoy@math.iitb.ac.in}

\subjclass[2010]{Primary 43A85, 43A90; Secondary 33C67, 22E30}
\keywords{Riemannian symmetric spaces, spherical maximal function}

\begin{document}

\begin{abstract}
  The spherical maximal operator of order $\mu$ was introduced by El Kohen in the setting of real hyperbolic spaces, where its $L^p$-boundedness properties were studied. In this paper, we extend this notion to rank-one Riemannian symmetric spaces of noncompact type. We establish sufficient conditions on the parameter $\mu$ for the corresponding spherical maximal operator to be bounded on $L^p$ for $1<p\leq\infty$. We also obtain a necessary condition for $L^p$-boundedness when $\mu$ is real and $1<p<\infty$. In particular, for $1<p\leq2$, the necessary condition agrees with the sufficient condition, yielding the sharp range of real $\mu$ in this regime.
\end{abstract}

\maketitle
\begin{center}
    {\em The paper is dedicated to the loving memory of Dr. Jayanta Sarkar.}
\end{center}
\section{Introduction}
Spherical maximal functions constitute one of the fundamental examples of maximal operators associated with curved hypersurfaces. 
For $\operatorname{Re}\alpha>0$, Stein (\cite{Stein1976}) defined spherical maximal function of order $\alpha$ as, \bes \mathcal M^\alpha f(x)=\sup_{t>0}|M_t^\alpha f(x)|, \ees
where
\bes M_t^\alpha f(x)=\frac{1}{\Gamma(\alpha)}
t^{-n}
\int_{|y|<t}
\left(1-\frac{|y|^2}{t^2}\right)^{\alpha-1}
f(x-y)\,dy.
\ees

The operators $M_t^\alpha$ admit an analytic continuation in the parameter $\alpha$. At $\alpha=0$, the analytically continued family recovers, up to normalization, the ordinary spherical mean operator \bes \mathcal M^0f(x)=\sup_{t>0}|f*\sigma_t(x)|.\ees Also at $\alpha=1$, this family reduces to the Hardy-Littlewood maximal function. 
Stein proved that 
\begin{theorem}[Stein, \cite{Stein1976}]
    For $n\geq 2$, \bes 
    \|\mathcal M^\alpha f\|_{L^p(\mathbb R^n)} \leq C \|f\|_{L^p(\mathbb R^n)},
    \ees
    for $\alpha>1-n+\frac{n}{p}$ when $1<p\leq 2$ or for $\alpha>\frac{2-n}{p}$ when $2\leq p \leq \infty$.
\end{theorem}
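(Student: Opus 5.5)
The plan is to follow Stein's analytic-interpolation strategy. We embed $M_t^\alpha$ in an analytic family in $\alpha$ and prove two endpoint estimates: an $L^2$ bound on a vertical line $\operatorname{Re}\alpha=\alpha_0$ with $\alpha_0>1-\frac n2$, and an $L^1$ (or $L^\infty$) bound on a line $\operatorname{Re}\alpha=1+\delta$. Stein's complex interpolation theorem for analytic families, applied to linearized operators $f\mapsto M^\alpha_{t(x)}f(x)$ with measurable $t(\cdot)$, then gives boundedness of $\mathcal M^\alpha$ on intermediate lines. All estimates must have at most exponential growth in $|\operatorname{Im}\alpha|$, so that the interpolation is admissible.

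The first step is the easy endpoints. For $\operatorname{Re}\alpha\ge 1$ the kernel $\frac{1}{\Gamma(\alpha)}t^{-n}(1-|y|^2/t^2)^{\alpha-1}\chi_{|y|<t}$ is dominated by $\frac{C}{|\Gamma(\alpha)|}$ times the normalized indicator of the ball. Hence
\[
\mathcal M^\alpha f\le C e^{c|\operatorname{Im}\alpha|}\, Mf ,
\]
where $M$ is the Hardy--Littlewood maximal function, and $\mathcal M^\alpha$ is bounded on $L^p$ for $1<p\le\infty$. The bound $|1/\Gamma(\alpha)|\lesssim e^{\pi|\operatorname{Im}\alpha|}$ controls the growth in $\operatorname{Im}\alpha$. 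At $p=\infty$ the same domination is trivial.

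The key step is the $L^2$ estimate for $\operatorname{Re}\alpha>1-\frac n2$. The Fourier multiplier of $M_1^\alpha$ is
\[
m_\alpha(\xi)=c\,\pi^{-\alpha}|\xi|^{-n/2-\alpha+1}J_{n/2+\alpha-1}(2\pi|\xi|),
\]
which decays like $|\xi|^{-(n-1)/2-\operatorname{Re}\alpha}$, together with its radial derivative. I would use the Littlewood--Paley/square-function argument via the identity
\[
|F(t)|^2\le |F(0)|^2+2\Big(\int_0^\infty |F(t)|^2\frac{dt}{t}\Big)^{1/2}\Big(\int_0^\infty |tF'(t)|^2\frac{dt}{t}\Big)^{1/2}.
\]
Alternatively, I would decompose $m_\alpha=\sum_j m_\alpha\psi(2^{-j}\cdot)$ and bound each dyadic maximal piece by $2^{j(1/2-(n-1)/2-\operatorname{Re}\alpha)}$ using Plancherel in $t$ (Sobolev embedding in the dilation variable). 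Summability in $j$ requires $\operatorname{Re}\alpha>1-\frac n2$, and the constants grow at most exponentially in $|\operatorname{Im}\alpha|$ by Bessel asymptotics. The low-frequency piece is dominated by $Mf$. This is the main obstacle: the Bessel asymptotics must hold uniformly in complex order.

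Finally, we interpolate. Interpolating the $L^2$ bound at $\operatorname{Re}\alpha=1-\frac n2+\epsilon$ with the $L^1$-type bound at $\operatorname{Re}\alpha=1+\epsilon$ gives the range $\alpha>1-n+\frac np$ for $1<p\le2$. Strictly, the $L^1$ endpoint is only weak type, so I would interpolate with $L^{p_0}$ for $p_0$ near $1$, using the Hardy--Littlewood bound, and let $p_0\to1$. Interpolating the $L^2$ line with the $L^\infty$ line at $\operatorname{Re}\alpha=1$ gives $\alpha>\frac{2-n}{p}$ for $2\le p\le\infty$.
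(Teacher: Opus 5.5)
\textbf{The gap: the $L^\infty$ endpoint is on the wrong line.} Your argument for $1<p\le 2$ is fine, but the range $2\le p\le\infty$ does not follow from what you wrote. You put the $L^\infty$ endpoint on the line $\operatorname{Re}\alpha=1$ (or $1+\delta$), using the Hardy--Littlewood domination. Interpolate that with your $L^2$ line $\operatorname{Re}\alpha=1-\frac n2+\epsilon$, putting weight $\theta=2/p$ on the $L^2$ endpoint. This only gives
\[
\operatorname{Re}\alpha>\tfrac{2}{p}\Bigl(1-\tfrac n2\Bigr)+\Bigl(1-\tfrac 2p\Bigr)=1-\tfrac np ,
\]
which is strictly larger than $\frac{2-n}{p}$ for every $p>2$. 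At $p=\infty$ you get only $\operatorname{Re}\alpha\ge 1$ instead of $\operatorname{Re}\alpha>0$. So the arithmetic claim in your last sentence is wrong. The problem starts in your first paragraph, where you place the ``$L^1$ (or $L^\infty$)'' bound on the same line $\operatorname{Re}\alpha=1+\delta$. That line is the right one for the $L^1$ side, but not for the $L^\infty$ side.

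\textbf{The missing ingredient.} You need an $L^\infty$ bound on $\operatorname{Re}\alpha=\epsilon$ for arbitrarily small $\epsilon>0$. It comes from integrability of the kernel, not from the maximal function. For $\operatorname{Re}\alpha>0$,
\[
\sup_{t>0}\Bigl\|\tfrac{1}{\Gamma(\alpha)}\,t^{-n}\bigl(1-\tfrac{|y|^2}{t^2}\bigr)_+^{\alpha-1}\Bigr\|_{L^1(dy)}=\frac{\pi^{n/2}\,\Gamma(\operatorname{Re}\alpha)}{|\Gamma(\alpha)|\,\Gamma(\operatorname{Re}\alpha+\frac n2)} .
\]
Hence $\|\mathcal M^\alpha f\|_\infty\le C(\alpha)\|f\|_\infty$, and $C(\alpha)$ grows at most exponentially in $|\operatorname{Im}\alpha|$, so it is admissible for Stein's interpolation. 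Interpolating this with your $L^2$ line gives $\operatorname{Re}\alpha>\frac 2p\bigl(1-\frac n2\bigr)=\frac{2-n}{p}$, as the statement requires.

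\textbf{Comparison with the paper.} The paper only cites Stein's theorem and does not prove it. Its proof of the analogous Theorem~\ref{main-thm} uses the same three-endpoint scheme you propose:
\begin{enumerate}
\item Hardy--Littlewood domination on $L^{1+r}$ for $\operatorname{Re}\mu\ge 1$;
\item an $L^2$ maximal bound for $\operatorname{Re}\mu>1-\frac d2$;
\item an $L^\infty$ bound for $\operatorname{Re}\mu>0$, obtained exactly by integrating the kernel explicitly.
\end{enumerate}
Once you move your $L^\infty$ endpoint to $\operatorname{Re}\alpha>0$, the rest of your plan goes through. That includes the dyadic $L^2$ argument (where the square-function identity is applied to the frequency-localized pieces, which vanish at $t=0$), replacing $L^1$ by $L^{p_0}$ with $p_0\to 1$, and linearizing the supremum.
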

Therefore, for $\alpha=0$, for $n\geq 3$, the spherical maximal function $\mathcal M^0f$ is bounded for $p>\frac{n}{n-1}$. Stein proved that this range is sharp. Bourgain (\cite{Bourgain1986}) proved that this maximal function is bounded for $p>2$ in the case when $n=2$.

  The introduction of spherical means of complex order provides a broader analytic framework in which the singularity of the averaging kernel can be varied continuously and analytic interpolation methods can be applied.

An early fundamental contribution to maximal-function theory on noncompact symmetric spaces (arbitrary rank) is due to Str\"{o}mberg (\cite{Stromberg1981}), who established weak type $L^1$ estimates for maximal functions in this setting. 
The spherical maximal function of order $\alpha$ was studied by El Kohen (\cite{ElKohen1980}) on real hyperbolic spaces. Let $\mathbb H^n$ denote the real hyperbolic space realized in the hyperboloid model. For $\operatorname{Re}\alpha>0$, El Kohen introduced the family of spherical operators

\be \label{Kohen-defn} M_t^\alpha f(x)=\frac{1}{\Gamma(\alpha)}
\frac{2e^t}{(e^t-1)^{2\alpha}(\sinh t)^{n-2}}
\int_{B(x,t)}
[e^t x-y]^{\alpha-1}f(y) \,dy,
\ee
where $B(x,t)$ denotes the geodesic ball in $\mathbb H^n$ centered at $x$ with radius $t$, and $[\cdot,\cdot]$ denotes the Minkowski bilinear form.

The corresponding spherical maximal operator of complex order $\alpha$ is defined by

\bes 
\mathfrak m^\alpha f(x)=\sup_{t>0}
\left|M_t^\alpha f(x)\right|.
\ees

The family $M_t^\alpha$ is initially defined for $\operatorname{Re}\alpha>0$ and admits an analytic continuation to
\bes
\operatorname{Re}\alpha>\frac{1-n}{2}.
\ees

For $\alpha=0$, the analytically continued operator coincides with the ordinary spherical mean,
\bes
f*d\sigma_t(x),
\ees
where $d\sigma_t$ is the normalized surface measure on the geodesic sphere of radius $t$. Consequently,

\bes \mathfrak m^0 f(x)=\sup_{t>0}
\left|f*d\sigma_t(x)\right|,
\ees
is the usual spherical maximal operator on $\mathbb H^n$. 
\begin{theorem}[Kohen, \cite{ElKohen1980}] Let $n\geq 2$. Then \bes
\|\mathfrak m^\alpha f\|_{L^p(\mathbb H^n)}
\leq C\|f\|_{L^p(\mathbb H^n)},
\ees
provided
\bes
\operatorname{Re}\alpha> 1-n+\frac{n}{p},
\qquad 1<p\leq 2,
\ees
or
\bes
\operatorname{Re}\alpha > \frac{2-n}{p},
\qquad 2<p\leq\infty.
\ees
\end{theorem}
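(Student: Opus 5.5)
We argue by analytic interpolation (Stein's theorem) applied to the family $\alpha\mapsto M_t^\alpha$. The structure is: $L^\infty$ bounds for large $\operatorname{Re}\alpha$, $L^2$ bounds for $\operatorname{Re}\alpha>\frac{2-n}{2}$, and $L^1$-type control near $\operatorname{Re}\alpha=1$. First we linearize the supremum by choosing a measurable function $x\mapsto t(x)$. This gives an analytic family of linear operators $T^\alpha f(x)=M^\alpha_{t(x)}f(x)$, with constants admissible of growth at most $e^{c|\operatorname{Im}\alpha|}$, as Stein's interpolation theorem requires.

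\textbf{Endpoints.} For $\operatorname{Re}\alpha>0$, formula (\ref{Kohen-defn}) is a positive integral operator against $|(\cdot)^{\alpha-1}|=(\cdot)^{\operatorname{Re}\alpha-1}$.

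\emph{$L^\infty$ endpoint.} For $\operatorname{Re}\alpha>0$ (in particular near $\operatorname{Re}\alpha=0^+$), we show $\sup_t|M^\alpha_t f|\le C_\alpha\|f\|_\infty$. This requires computing the total mass of the kernel: integrating in geodesic polar coordinates on $B(x,t)$ and using $[e^tx,y]$ as a function of $d(x,y)$, the normalization in (\ref{Kohen-defn}) is exactly chosen so that the mass is bounded by $|\Gamma(\operatorname{Re}\alpha)/\Gamma(\alpha)|$. The factor $1/|\Gamma(\alpha)|$ grows at most exponentially in $|\operatorname{Im}\alpha|$.

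\emph{Weak $L^1$ / $L^p$ endpoint at $\operatorname{Re}\alpha=1$.} There the kernel is dominated by the (weighted) ball average, so $\mathfrak m^\alpha f$ is controlled by the Hardy--Littlewood maximal function on $\mathbb H^n$. By Str\"omberg's theorem this is weak type $(1,1)$ and bounded on all $L^p$, $p>1$. Small-$t$ balls behave Euclideanly. For large $t$ the factor $(\sinh t)^{-(n-2)}e^t(e^t-1)^{-2\alpha}$ must be compared with the volume $\sim e^{(n-1)t}$; this comparison should follow from the same mass computation.

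\emph{$L^2$ endpoint.} For $\operatorname{Re}\alpha>\frac{2-n}{2}$ we use the spherical (Helgason--Fourier) transform. $M_t^\alpha$ is convolution with a radial kernel whose spherical transform is an explicit multiplier $m_\alpha(t,\lambda)$, essentially a Jacobi function / associated Legendre function $\mathcal P$ in $\cosh t$ of order related to $\alpha$ (the analogue of $J_{n/2+\alpha-1}(t\lambda)/(t\lambda)^{n/2+\alpha-1}$). We split $t\le1$ and $t\ge1$.
\begin{itemize}
\item \emph{Local part.} Use Stein's square-function argument, namely
\[
\sup_t|M_t f|^2\le |M_{t}f|^2\big|_{t\to0}+2\Big(\int|M_tf|^2\frac{dt}{t}\Big)^{1/2}\Big(\int|t\partial_tM_tf|^2\frac{dt}{t}\Big)^{1/2},
\]
together with Plancherel and the asymptotics of the Jacobi functions. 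The decay $|m_\alpha(t,\lambda)|\lesssim (t\lambda)^{-(n-1)/2-\operatorname{Re}\alpha}$ gives finiteness exactly when $\operatorname{Re}\alpha>\frac{2-n}{2}$.
\item \emph{Global part ($t\ge1$).} Use the Kunze--Stein phenomenon / Harish-Chandra $\Xi$ decay. The multiplier decays like $e^{-\rho t}$ times bounded oscillation, which makes $\sum_k\sup_{t\in[k,k+1]}$ summable on $L^2$. Alternatively, dominate pointwise by $e^{-\rho t}$-weighted convolution with $\Xi$ and use Herz's principle.
\end{itemize}

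\textbf{Interpolation.} We interpolate between the $L^2$ line $\operatorname{Re}\alpha=\frac{2-n}{2}+\epsilon$ and the $L^{1+}$ line $\operatorname{Re}\alpha=1+\epsilon$. The segment joining $(\tfrac12,\tfrac{2-n}{2})$ and $(1,1)$ in the $(1/p,\operatorname{Re}\alpha)$ plane is $\operatorname{Re}\alpha=1-n+\frac np$, which gives the range $1<p\le2$. To stay within strong-type estimates, we use Str\"omberg's $L^{p_0}$ bounds with $p_0$ close to $1$ and let $p_0\to1$. Interpolating instead between $L^2$ at $\frac{2-n}{2}$ and $L^\infty$ at $0$ gives the line $\operatorname{Re}\alpha=\frac{2-n}{p}$ for $2\le p\le\infty$.

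\textbf{Main obstacle.} The hardest step is the $L^2$ estimate. It requires uniform control of the Jacobi-function multiplier in both $t$ and $\lambda$, including the continuation to negative $\operatorname{Re}\alpha$ with only exponential growth in $\operatorname{Im}\alpha$. It also requires handling the large-$t$ regime, where the Euclidean method fails and exponential volume growth must be beaten by the $e^{-\rho t}$ decay of spherical functions. I would try to write $M_t^\alpha$ via a fractional (Riemann--Liouville type) integral in the variable $\cosh t$ applied to $M_t^0$, which reduces the multiplier analysis to known estimates for $\varphi_\lambda$.
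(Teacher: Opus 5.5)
Your architecture matches the paper's proof of Theorem~\ref{main-thm}, which for $X=\mathbb H^n$, $d=n$, gives exactly these ranges. Both use Hardy--Littlewood domination via Str\"omberg for $\operatorname{Re}\alpha\ge1$, a uniform mass bound on $L^\infty$ for $\operatorname{Re}\alpha>0$, an $L^2$ bound for $\operatorname{Re}\alpha>\frac{2-n}{2}$ split into small and large radii, and complex interpolation; your interpolation arithmetic is right.

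The gap is in the local $L^2$ step, which is exactly where the threshold $\frac{2-n}{2}$ must be produced. Applied to $F(t)=M_tf(x)$ on $(0,1]$, your displayed inequality is vacuous: $M_tf\to f$, so $\int_0|M_tf|^2\,dt/t=\infty$. Subtracting a smoother average (say $M_t^{\alpha+1}f$) removes the divergence at $t\lambda\to0$. But after Cauchy--Schwarz in $x$ and Plancherel you are left with a product of two separate square-function norms. The second is governed by $\int_{t\lambda\ge1}|t\,\partial_t m_\alpha(t,\lambda)|^2\,dt/t\approx\int_1^{\lambda}s^{2-2a}\,ds/s$, with $a=\frac{n-1}{2}+\operatorname{Re}\alpha$, and this is bounded uniformly in $\lambda$ only if $a>1$. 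So the scheme as written yields only $\operatorname{Re}\alpha>\frac{3-n}{2}$, and interpolating from that line gives a strictly smaller range than claimed.

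The missing idea is to localize before applying the Sobolev-in-$t$ inequality. Decompose the multiplier into pieces where $t\lambda\approx2^j$. The two factors are then $\approx2^{-ja}$ and $\approx2^{j(1-a)}$, and their geometric mean $2^{j(\frac12-a)}$ is summable exactly when $\operatorname{Re}\alpha>\frac{2-n}{2}$. The piece $t\lambda\lesssim1$ is controlled by the Hardy--Littlewood maximal function. Equivalently, you can use Stein's fractional $g$-functions of order $\beta\in(\frac12,a)$, obtained from a Riemann--Liouville representation in $t$ plus Cauchy--Schwarz; this is close to the idea in your last sentence. The paper avoids this computation by quoting Sogge's local maximal theorem in Lemma~\ref{local-reduction}.

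The global step has the same defect. ``$e^{-\rho t}$ times bounded oscillation'' is not enough: bounding $\sup_{t\in[T,T+1]}$ by $\|F\|_{L^2_t}^{1/2}\|\partial_tF\|_{L^2_t}^{1/2}$ costs a factor $|\lambda|^{1/2}$, since $\partial_t$ brings down $\lambda$. You need a dyadic decomposition in $\lambda$ together with the high-frequency decay $|m^\alpha_t(\lambda)|\lesssim e^{-\rho t}(1+|\lambda|)^{-a}$, supplied by the $c$-function factor in the Harish-Chandra expansion. This gives $2^{-(\operatorname{Re}\alpha+\frac{n-2}{2})j}e^{-\rho T}(1+T)$ per block, as in the paper's lemma on $\sup_{t\in[T,T+1]}|f*A^\mu_{t,j}|$, and this is summable in $j$ and $T$ precisely when $\operatorname{Re}\alpha>\frac{2-n}{2}$.

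Your alternative of dominating by a positive $\Xi$-type kernel does not work below $\operatorname{Re}\alpha=1$. For $0<\operatorname{Re}\alpha<1$ the boundary singularity $(\cosh t-\cosh r)^{\operatorname{Re}\alpha-1}$ moves with $t$, so $\sup_{t\in[T,T+1]}|K_t(r)|=\infty$ for $T\le r<T+1$. For $\operatorname{Re}\alpha\le0$ the kernels are only distributions.
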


The ordinary spherical maximal problem on general rank-one noncompact symmetric spaces was subsequently investigated by Ionescu in connection with radial Fourier integral operators and proved that the operator is bounded on $L^p(\mathbb H^2)$ for $2<p\leq \infty$ .

More recently, Chen, Shen, Wang and Yan (\cite{ChenShenWangYan2025}) returned to the maximal operators of complex order on real hyperbolic spaces. They obtained several necessary conditions for boundedness. Their results demonstrate that the admissible range is influenced by both local Euclidean-type obstructions and the global hyperbolic geometry.
 Their result can be stated as follows.
\begin{theorem}[Chen--Shen--Wang--Yan, \cite{ChenShenWangYan2025}]\label{CSWY}
Let $n\geq 2$, $\alpha\in\mathbb{R}$ and $1<p<\infty$. Suppose that the spherical maximal operator
$\mathfrak m^\alpha f$
is bounded on $L^p(\mathbb{H}^n)$. Then the following necessary conditions hold:

(i) If $1<p\leq 2$, then
\bes
\alpha>1-n+\frac{n}{p}.
\ees

(ii) If $2<p<\infty$, then
\bes
\alpha
\geq
\max\left\{
\frac{1}{p}-\frac{n-1}{2},
\frac{1-n}{p}
\right\}.
\ees
\end{theorem}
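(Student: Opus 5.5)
We need necessary conditions for $L^p$ boundedness of $\mathfrak m^\alpha$ on $\mathbb H^n$ with $\alpha$ real. The plan is to test the maximal inequality against explicit functions. Each test function will exhibit one of three obstructions: a local Euclidean-type singularity, radial decay at infinity, or a Knapp/focusing-type example for the sphere. In every case we bound $\mathfrak m^\alpha f$ from below by $|M_t^\alpha f(x)|$ for a well-chosen $t=t(x)$, and show that the resulting function is not in $L^p$, or that its norm exceeds $C\|f\|_p$ as a parameter tends to its limit.

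\emph{Case (i), $1<p\le 2$.} Take $f(y)=d(y,o)^{-n/p}\,|\log d(y,o)|^{-\beta}\chi_{B(o,1/2)}(y)$ with $\beta p>1$ but $\beta$ close to $1/p$, so that $f\in L^p$. For $x$ with $d(x,o)=t\le 1/4$, choose the radius equal to $t$, so the sphere $S(x,t)$ passes through $o$.

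For $\alpha>0$, write $M_t^\alpha$ as an integral over the ball with the kernel $[e^tx-y]^{\alpha-1}$. Near the boundary sphere this kernel behaves like $(t-d(x,y))^{\alpha-1}t^{\alpha-1}$, up to the normalization $(e^t-1)^{-2\alpha}(\sinh t)^{2-n}\sim t^{-2\alpha-n+2}$. Integrating the singularity of $f$ near the point $o$, which lies on the boundary sphere, gives a lower bound $|M_t^\alpha f(x)|\gtrsim t^{-n/p}\cdot t^{\,n-1-\dots}$. The exponents are chosen to match the Euclidean computation of Stein. The net effect is $\mathfrak m^\alpha f(x)\gtrsim t^{\,\alpha-1+n-n/p-n}\cdot(\text{log})$, and this lower bound fails to be in $L^p$ near $o$ exactly when $\alpha\le 1-n+n/p$.

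For $\alpha\le 0$ we cannot use the integral formula directly. Instead we use the analytic continuation: $M_t^\alpha$ acts on radial-about-$x$ data as a fractional derivative (a Riemann–Liouville integral of order $\alpha$) of the spherical means. Since $f$ is positive and monotone in the relevant variable, we compute its sign. Positivity of $f$ lets us keep only the dominant term. Because everything is localized to a ball of radius $1/2$, the curvature terms only contribute bounded factors. The borderline case $\alpha=1-n+n/p$ is what the logarithm handles.

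\emph{Case (ii), $2<p<\infty$.} There are two conditions to prove.

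For $\alpha\ge (1-n)/p$, the obstruction is global. Take $f=\chi_{B(o,1)}$. For $d(x,o)=t$ large, use radius $t$. The sphere $S(x,t)$ meets $B(o,1)$ in a set whose measure, relative to the normalization, behaves like $e^{-(n-1)t}$, multiplied by a factor coming from the order-$\alpha$ kernel. That factor is $(\text{distance to boundary})^{\alpha-1}$ with a normalization $e^{-2\alpha t}\cdot e^{-(n-2)t}\cdot e^{t}$. Working through these, we obtain $\mathfrak m^\alpha f(x)\gtrsim e^{-(n-1)t}\,e^{(n-1-?)\,t}$, which we match to $e^{-(n-1)t}$ times a growth factor. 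Integrating against the volume $e^{(n-1)t}\,dt$ then diverges exactly when $p\alpha<1-n$. We expect this strict-versus-nonstrict bookkeeping to be exact, giving $\ge$.

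For $\alpha\ge \frac1p-\frac{n-1}{2}$, use a Knapp/focusing example. Let $f$ be the characteristic function of a $\delta$-neighborhood of a unit geodesic sphere $S(o,1)$, or equivalently $f(y)=|1-d(y,o)|^{-1/p}|\log|1-d(y,o)||^{-\beta}$ near that sphere. For $x$ within $\delta$ of $o$, the sphere $S(x,1)$ is tangent to, or nearly coincides with, the support. The order-$\alpha$ average then behaves like $\delta^{\alpha}\cdot\delta^{-(n-1)/2}$-type focusing. The set of such $x$ has measure $\delta^n$, or else we use the full neighborhood of the sphere with radius varying. Comparing gives $\delta^{\alpha-1+\frac{n+1}{2}}\delta^{n/p}\lesssim\delta^{1/p}$. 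After choosing the right geometry (radius $1$ and $x$ in a $\delta$-ball, versus radius $1-d(x,o)$ along normals), this should yield exactly $\alpha\ge \frac1p-\frac{n-1}{2}$.

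\emph{Main obstacle.} The hardest part will be the lower bounds for the analytically continued operators with $\alpha\le 0$. There the kernel is a distribution rather than a positive function, so positivity arguments fail. We plan to express $M_t^\alpha$, via the radial formula, as $\frac{1}{\Gamma(\alpha)}\int_0^t(\cdots)^{\alpha-1}\frac{d}{ds}(\cdots)$ after integrating by parts. We then choose test functions that are smooth and monotone away from the critical point, so that a single term dominates with a definite sign.
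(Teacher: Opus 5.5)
The paper gives no proof of this statement; it is quoted from Chen--Shen--Wang--Yan. The only necessary condition the paper proves is Theorem \ref{thm:necessary}, and it uses the Stein-type example of your case (i). There one takes the singular function $|y|^{1-d-\mu}/(-\log|y|)$ on a truncated cone at $eK$, points $x$ near $a_{1/2}$, and radius $|x|$, so that the sphere passes through the singularity with $|x|-d(x,y)\sim|y|$.

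Your case (i) is that idea, but the bookkeeping should be done in the local variable $r=d(y,o)$, not in $t$. For $0<\alpha$ and $1/p<\beta\le 1$, the integral $\int_0 r^{\alpha-1}\,r^{-n/p}|\log r|^{-\beta}\,r^{n-1}\,dr$ diverges exactly when $\alpha\le 1-n+n/p$. Hence $M^\alpha_{d(x,o)}f(x)=+\infty$ for all small $d(x,o)$. The obstruction is this divergence, not a failure of $p$-integrability of a power of $d(x,o)$.

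Part (ii) has genuine gaps. First, the global example cannot give $\alpha\ge\frac{1-n}{p}$. For $d(x,o)=t$ large and $y\in B(o,1)\cap B(x,t)$ one has $\cosh t-\cosh d(x,y)\approx e^{t}(t-d(x,y))$. So the kernel contributes a factor $e^{(\alpha-1)t}$, which exactly cancels the $\alpha$-dependence of the normalization $2e^t(2e^t)^{\alpha-1}(e^t-1)^{-2\alpha}(\sinh t)^{2-n}\approx e^{(2-n-\alpha)t}$. Hence $\mathfrak m^\alpha\chi_{B(o,1)}(x)\approx e^{-(n-1)d(x,o)}$ for every $\alpha>0$. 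For $\alpha=1$ this is just an average over a ball of volume $\approx e^{(n-1)t}$. This function lies in $L^p(\mathbb H^n)$ for every $p>1$, so your factor $e^{(n-1-?)t}$ cannot be filled in.

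Both conditions in (ii) are local. They come from the shell $f_\delta(y)=\chi\bigl(\delta^{-1}(d(y,o)-1)\bigr)$, with $\|f_\delta\|_p\approx\delta^{1/p}$, used in two different configurations:
\begin{enumerate}
\item Take $x\in B(o,c\delta)$ and radius near $1$. The whole sphere lies in the shell, so $|M^\alpha_t f_\delta(x)|\gtrsim\delta^{\alpha}$ on a set of measure $\approx\delta^{n}$. Boundedness then forces $\alpha+\frac np\ge\frac1p$, i.e. $\alpha\ge\frac{1-n}{p}$.
\item Take $\frac14<d(x,o)<\frac12$ and radius near $1-d(x,o)$ (internal tangency). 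The sphere meets the shell in a cap of radius $\approx\delta^{1/2}$, so $|M^\alpha_t f_\delta(x)|\gtrsim\delta^{\alpha+\frac{n-1}{2}}$ on a set of measure $\approx1$. This forces $\alpha\ge\frac1p-\frac{n-1}{2}$.
\end{enumerate}
Your inequality $\delta^{\alpha-1+\frac{n+1}{2}}\delta^{n/p}\lesssim\delta^{1/p}$ mixes the two configurations and yields only $\alpha\ge\frac{1-n}{2}+\frac{1-n}{p}$, which is weaker than both.

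Second, $\alpha\le0$ is not a side case. Every $\alpha$ to be excluded in (ii) is negative, and in (i) every excluded $\alpha$ is $\le0$ once $p\ge\frac{n}{n-1}$. For such $\alpha$, $M^\alpha_t$ exists only by analytic continuation. You therefore need a family of smooth test functions and a genuine asymptotic; positivity of $f$ gives nothing.

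Your single integration by parts yields a positive kernel only for $-1<\alpha\le0$, and only against profiles that increase toward $s=t$. Further integrations by parts need sign control of higher derivatives. That might be arranged for the point singularity in (i), but the shell profiles in (ii) are bumps, so it fails there.

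What works is scaling. Put $t=s_0+\delta\sigma_0$ and $s=t-\delta v$, where $s_0=1$ in the first configuration and $s_0=1-d(x,o)$ in the second. The radial formula then gives $M^\alpha_t f_\delta(x)=\delta^{\alpha+\kappa}\,a\,\bigl(I^\alpha G(\sigma_0)+o(1)\bigr)$. Here $\kappa=0$ or $\frac{n-1}{2}$, $a$ is bounded above and below, and $G$ is a fixed rescaled spherical-mean profile. The quantity $I^\alpha G(\sigma_0)=\frac{1}{\Gamma(\alpha)}\int_0^\infty v^{\alpha-1}G(\sigma_0-v)\,dv$ is understood by analytic continuation in $\alpha$. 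It is nonzero for some $\sigma_0$ because $I^{-\alpha}I^\alpha G=G\neq0$, and you must show the $o(1)$ is uniform in $x$. Case (i) needs the same kind of computation for smooth truncations of your singular function.

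The paper's own proof of Theorem \ref{thm:necessary} bounds the kernel from below pointwise. That is legitimate only for $\mu>0$, so it does not supply this step either.
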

In a related direction, Wang and Zhang (\cite{WangZhang}) studied lacunary spherical maximal operators on real hyperbolic spaces and showed that the geometry at infinity produces a substantially different phenomenon from the corresponding Euclidean problem.

Let $X=G/K$ be a rank one Riemannian symmetric space of noncompact type and let $d\sigma_\tau$ denote the normalized spherical measure, defined by
\bes
\int_X f(x)d\sigma_\tau=\int_K f\bigl(ka_\tau\cdot o\bigr) \, dk.
\ees

For $T\geq 0$, define the maximal operator
\bes
\mathcal M_T f(x)=\sup_{\tau\in[T,T+1]}
\left|f*d\sigma_\tau(x)\right|.
\ees

Ionescu proved the following result.
\begin{theorem} [Ionescu, \cite{Ionescu2000}]
     Let $X$ be a noncompact symmetric space of real rank one and let $n=\dim X$. If
\bes
\frac{n}{n-1}<p<\infty
\ees
and $T\geq0$, then
\bes
\|\mathcal M_T f\|_{L^p(X)}
\leq
C_p
e^{-|\rho|(1-\gamma_p)T}
(T+1)^\beta
\|f\|_{L^p(X)},
\ees
where
\bes
\gamma_p=\left|\frac{2}{p}-1\right|.
\ees
The constant $\beta$ may be taken to be
\bes
\beta=1,\qquad n\geq3,
\ees
and
\bes
\beta=2,\qquad n=2.
\ees
\end{theorem}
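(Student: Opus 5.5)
The plan is to prove Ionescu's estimate for the local maximal operator $\mathcal M_T$ by splitting the spherical measure on the spectral side into a part that is smooth at the scale of the geometry and a part that carries the singularity of the sphere. Write $f*d\sigma_\tau$ through the spherical transform: the spherical function $\varphi_\lambda(a_\tau)$ is the spherical Fourier transform of $d\sigma_\tau$. By the Harish-Chandra expansion, for $\tau\geq 1$,
\[
\varphi_\lambda(a_\tau)=\mathbf c(\lambda)e^{(i\lambda-\rho)\tau}\Phi_\lambda(\tau)+\mathbf c(-\lambda)e^{(-i\lambda-\rho)\tau}\Phi_{-\lambda}(\tau),
\]
with $\Phi_{\pm\lambda}(\tau)=1+O(e^{-2\tau})$ together with its derivatives, uniformly in $\lambda$. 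Insert a smooth dyadic partition of unity $\sum_{j\geq 0}\psi_j(\lambda)$ in $|\lambda|$, which decomposes $d\sigma_\tau=\sum_j \sigma_\tau^j$. The maximal operator is then bounded by $\sum_j\sup_{\tau\in[T,T+1]}|f*\sigma^j_\tau|$, and each piece will be estimated separately.

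For each $j$, I would control the supremum over $\tau\in[T,T+1]$ by the standard Sobolev embedding in $\tau$:
\[
\sup_\tau|F(\tau)|^p\leq |F(T)|^p+p\Bigl(\int_T^{T+1}|F|^p\Bigr)^{1/p'}\Bigl(\int_T^{T+1}|\partial_\tau F|^p\Bigr)^{1/p}.
\]
Differentiating in $\tau$ costs a factor $2^j$. It therefore suffices to prove $L^p$ bounds for the fixed-time operators $f\mapsto f*\sigma^j_\tau$ and $f\mapsto f*\partial_\tau\sigma^j_\tau$, uniformly in $\tau\in[T,T+1]$, with a gain of the form $2^{-j(n-1)(1-1/p)}$ (times $2^{j}$ for the derivative), for $1<p\le 2$. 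There are two endpoints to establish:

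(a) An $L^2$ estimate. By the Plancherel theorem together with the asymptotic $|\varphi_\lambda(a_\tau)|\lesssim (1+|\lambda|)^{-(n-1)/2}e^{-\rho\tau}(1+\tau)$, this gives a bound $2^{-j(n-1)/2}e^{-\rho T}(T+1)$.

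(b) An $L^1$ (or $H^1$/weak $L^1$) estimate. Near the singular support the kernel $\sigma^j_\tau$ is essentially $2^{j}$ times a bump of width $2^{-j}$ around the sphere of radius $\tau$. Its $L^1$ norm is $O(1)$ after normalization, but we also need the Kunze--Stein type phenomenon.

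This is where the main obstacle lies: on $X$, convolution on $L^p$ with $p\ne 2$ by a $K$-biinvariant kernel supported near $|x|=\tau$ is governed by Herz's principle, $\|f*k\|_p\le \|f\|_p\int |k|\varphi_{i\gamma_p\rho}$. I would use exactly this: $\varphi_{i\gamma_p\rho}(a_\tau)\asymp e^{-(1-\gamma_p)\rho\tau}(1+\tau)$ for $p\neq 2$, suitably modified at $p=2$. This yields the factor $e^{-|\rho|(1-\gamma_p)T}(T+1)$ for the pieces whose kernel is compactly supported in a shell around radius $\tau$. Interpolating between the $L^2$ square-function bound and this $L^1$-type bound gives decay $2^{-j\epsilon(p)}$ with $\epsilon(p)>0$ precisely when $p>n/(n-1)$. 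The range $p>2$ follows by a similar interpolation with $L^\infty$, where the kernel bound is trivial.

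The tails of the kernels away from the shell (coming from the smoothing by $\psi_j$) must be shown to decay rapidly in $2^j\,\mathrm{dist}$. This requires the kernel estimates of the Harish-Chandra expansion, and I expect the extra $(T+1)$ factor from the derivative step to produce $\beta=1$. In dimension $n=2$ the summation in $j$ is borderline and, as in Bourgain's circle theorem, a genuine local smoothing estimate for the wave-type operators $e^{i\tau\sqrt{\Delta}}$ would be needed. I would import it via the Euclidean result applied in local coordinates, losing one more factor of $T+1$, giving $\beta=2$.
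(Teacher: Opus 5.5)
The paper does not prove this theorem; it is quoted from Ionescu. Its $L^2$ lemma for $\sup_{t\in[T,T+1]}|f*A^\mu_{t,j}|$ does, however, reproduce Ionescu's key device, and that device is what your plan lacks.

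\textbf{The gap.} Your skeleton is the right one: dyadic decomposition, Sobolev embedding in $\tau$, and the exponent count giving $p>n/(n-1)$ (resp.\ $n\ge 3$ for $p>2$). But the pieces $\sigma^j_\tau=\mathcal F^{-1}\bigl(\psi_j(\lambda)\varphi_\lambda(a_\tau)\bigr)$ are not bounded on $L^p(X)$ for any $p\neq 2$.
\begin{itemize}
\item By Clerc--Stein, the multiplier of a $G$-invariant operator bounded on $L^p$ extends holomorphically to $|\operatorname{Im}\lambda|<\gamma_p\rho$. The compactly supported function $\psi_j(\lambda)\varphi_\lambda(a_\tau)$ cannot do this.
\item On the kernel side, $\sigma^j_\tau(r)\approx e^{-\rho(r+\tau)}2^j\check\psi(2^j(r-\tau))$, where $\check\psi$ decays faster than any polynomial but not exponentially. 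Hence $\int|\sigma^j_\tau|\,\varphi_{i\gamma_p\rho}\,e^{2\rho r}\,dr\approx\int e^{\gamma_p\rho r}(\cdots)\,dr=\infty$, and no Herz or Kunze--Stein bound is available.
\end{itemize}
So the reduction to $L^p$ bounds for $f\mapsto f*\sigma^j_\tau$ reduces to a false statement. Showing that the tails ``decay rapidly in $2^j\,\mathrm{dist}$'' cannot give $L^p$ bounds for them either, because polynomial decay in $|r-\tau|$ loses to the volume growth. Relatedly, you interpolate an $L^2$ bound for the untruncated $\sigma^j_\tau$ (from Plancherel) with an $L^1$-type bound for the shell-truncated kernel. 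These are different operators, so Riesz--Thorin does not apply.

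\textbf{The fix.} Never estimate the tails on $L^p$. Since $\sum_j\sigma^j_\tau=d\sigma_\tau$ is supported on $r=\tau\in[T,T+1]$, you may replace $\sigma^j_\tau$ by $A_{\tau,j}=\psi_T\,\sigma^j_\tau$ at no cost, where $\psi_T$ is a spatial cutoff equal to $1$ on $[T-\tfrac12,T+\tfrac32]$. The difficulty then sits at the opposite endpoint from where you placed it.
\begin{itemize}
\item The $L^1$ and $L^\infty$ bounds for $A_{\tau,j}$ are trivial: $\|A_{\tau,j}\|_{L^1(X)}=O(1)$, and $O(2^j)$ after $\partial_\tau$.
\item Herz's principle is unnecessary. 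Riesz--Thorin between $O(1)$ on $L^1$ (or $L^\infty$) and $e^{-\rho T}(1+T)$ on $L^2$ already gives $e^{-\rho(1-\gamma_p)T}(1+T)^{1-\gamma_p}$. Incidentally, $\varphi_{i\gamma_p\rho}(a_\tau)\asymp e^{-(1-\gamma_p)\rho\tau}$ for $p\neq 2$; the factor $1+\tau$ occurs only at $p=2$, so your asymptotic is inverted.
\item What is not trivial is the $L^2$ bound for the truncated kernel, whose multiplier is no longer $\psi_j\varphi_\lambda(a_\tau)$. Write $A_{\tau,j}=\sigma^j_\tau-B_{\tau,j}$. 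Show $|B_{\tau,j}(r)|\lesssim 2^{-jN}e^{-\rho(r+\tau)}(1+|r-\tau|)^{-N}$ by integrating by parts in $\lambda$ in the Harish-Chandra expansion. Then use $|\varphi_\lambda|\le\varphi_0\lesssim(1+r)e^{-\rho r}$ to get $\|\widehat{B_{\tau,j}}\|_\infty\lesssim 2^{-jN}e^{-\rho T}(1+T)$.
\end{itemize}
At $p=2$ the weight $\varphi_0$ exactly cancels the volume growth, which is why the tails are harmless there and only there. This is precisely the argument in the paper's proof of its $L^2$ lemma.

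\textbf{Two further missing pieces.}
\begin{itemize}
\item $T\in[0,1)$ lies outside the range of the Harish-Chandra expansion and requires the local Euclidean theory (Stein, Sogge).
\item For $n=2$, importing Euclidean local smoothing ``in local coordinates'' does not address large $T$, where the spheres do not fit in a coordinate patch. You would need local smoothing for $f\mapsto f*A_{\tau,j}$ uniformly in $T$, combined with a spatial decomposition that recovers the exponential factor. This is where $\beta=2$ has to come from.
\end{itemize}
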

In particular, since
\bes
1-\gamma_p>0
\ees
for $1<p<\infty$, the norm of $\mathcal M_T$ decays exponentially as $T\rightarrow\infty$.

As a consequence, Ionescu obtained the boundedness of the full spherical maximal operator:
\bes
\left\|
\sup_{0\leq\tau<\infty}
|f*d\sigma_\tau|
\right\|_{L^p(X)}
\leq
C_p\|f\|_{L^p(X)}
\ees
whenever
\bes
\frac{n}{n-1}<p\leq\infty.
\ees

Moreover, the exponential part of the decay,
\bes
e^{-|\rho|(1-\gamma_p)T},
\ees
is sharp.

Our aim in this article is to introduce the spherical maximal function of order $\mu$ on general rank-one Riemannian symmetric spaces of noncompact type and to study its $L^p$-boundedness properties. This class includes the real hyperbolic spaces. Let $m_1$ and $m_2$ denote the multiplicities of the roots $\gamma$ and $2\gamma$, respectively, arising in the restricted root-space decomposition of the Lie algebra $\mathfrak g$ of $G$.
To define spherical maximal function of order $\mu$, we observe that $[e^t x-y]$ in (\ref{Kohen-defn}) can be written as 
\bes 2e^t\bigl(\cosh t-\cosh d(x,y)\bigr).\ees
Then we define the maximal operator of order $\mu$ on $X$ as
\bes
\mathcal M^\mu f(x)
=\sup_{t>0}|M_t^\mu f(x)|,
\ees
where $M_t^\mu$ denoted by 
\bes 
M_{t}^{\mu} f(x) = \frac{1}{\Gamma(\mu)}    C(t,\mu)   \int_{B(x, t)} f(y) \left( \cosh 2t - \cosh 2d(x, y) \right)^{\mu - 1} d\sigma(y),
\ees
where the constant $C(t, \mu)$ is given by
\bes
    C(t,\mu) = \frac{4 \, e^{2\mu t}}{\sinh^{2\alpha_0 + 2 \mu}(t) \, \cosh^{2 \beta_0 + 2\mu}(t)}.   
\ees

In contrast with the real hyperbolic case, the non-real rank-one symmetric spaces may have $m_2\neq0$. This additional root multiplicity affects both the radial density and the parameters of the associated Jacobi functions.

As in the Euclidean and real hyperbolic settings, we shall see that when $\mu=0$, the analytically continued family reduces to the usual spherical mean operator, whereas for $\mu=1$ it is comparable to the centered Hardy--Littlewood maximal operator.

We now state our main theorem below for rank-1 symmetric spaces of noncompact type.
Let $d=\dim X=1+ m_1 + m_2$.
\begin{theorem}\label{main-thm}
  The spherical maximal function $\mathcal M^\mu$ of order $\mu$, is a bounded operator on $L^p(G/K)$ for:
  \begin{enumerate}[label=(\arabic*)]
        \item $1 < p \le 2,  \text{ if } \operatorname{Re}\,\mu > 1-d+ \frac{d}{p}$,
        \item $2 \le p \le \infty, \text{ if } \operatorname{Re}\,\mu > \frac{2-d}{p}.$
    \end{enumerate}
  
\end{theorem}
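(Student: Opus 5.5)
We follow Stein's analytic-interpolation scheme, in the form El Kohen used on $\mathbb H^n$. The idea is to embed $\mathcal M^\mu$ in an analytic family, prove an $L^2$ bound on one vertical line, prove $L^1\to L^{1,\infty}$ and $L^\infty$ bounds on another, and interpolate.

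\textbf{Step 1: radial and spectral description.} Write $M^\mu_t f = f * K^\mu_t$, with $K^\mu_t$ the $K$-biinvariant kernel
\[
K^\mu_t(r)=\frac{C(t,\mu)}{\Gamma(\mu)}\,(\cosh 2t-\cosh 2r)_+^{\mu-1}.
\]
In polar coordinates the density is $A(r)=c\,\sinh^{m_1+m_2}r\,\cosh^{m_2}r$. Set $\alpha_0=(m_1+m_2-1)/2$ and $\beta_0=(m_2-1)/2$, so that $\alpha_0$ and $\beta_0$ are the Jacobi parameters.

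Compute the spherical transform $\widehat{K^\mu_t}(\lambda)=\int_0^t K^\mu_t(r)\,\varphi_\lambda(r)A(r)\,dr$. Here $\varphi_\lambda=\varphi^{(\alpha_0,\beta_0)}_\lambda$ is a Jacobi function. The Bateman-type (Riemann--Liouville/Weyl) fractional integral formula for Jacobi functions gives
\[
\widehat{K^\mu_t}(\lambda)=c_\mu\,\varphi^{(\alpha_0+\mu,\beta_0+\mu)}_\lambda(t),
\]
and this is exactly why the normalisation $C(t,\mu)$ was chosen. This identity provides the analytic continuation in $\mu$ to $\operatorname{Re}\mu>-\alpha_0$. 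It also shows that $\mu=0$ gives $d\sigma_t$. For $\mu=1$, $C(t,1)(\cosh 2t-\cosh 2r)^0\lesssim |B(x,t)|^{-1}$, so $\mathcal M^1$ is dominated by the Hardy--Littlewood (Strömberg) maximal operator.

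\textbf{Step 2: endpoints.} For $\operatorname{Re}\mu\ge 1$ we have $|K^\mu_t|\lesssim e^{c|\operatorname{Im}\mu|^2}|B_t|^{-1}\mathbf 1_{B_t}$. Hence $\mathcal M^\mu$ is of weak type $(1,1)$ and bounded on $L^\infty$ by Strömberg's theorem. In addition, $\|K^\mu_t\|_{L^1}$ is uniformly bounded whenever $\operatorname{Re}\mu>0$, so $\mathcal M^\mu$ is bounded on $L^\infty$ for $\operatorname{Re}\mu>0$.

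For the $L^2$ estimate on $\operatorname{Re}\mu>1-d/2$, split the range of $t$.

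For $t\le 1$, use the Euclidean method. Write $M^\mu_t f$ at fixed $x$ as a function of $t$ and apply the Sobolev-in-$t$ trick
\[
\sup_t|F(t)|^2\le \int|F|^2\,\frac{dt}{t}+\int |t\partial_tF|^2\,\frac{dt}{t}
\]
(or a fractional version), using the Littlewood--Paley $g$-function on $X$. The needed input is the Jacobi asymptotics
\[
|\varphi^{(a,b)}_\lambda(t)|\lesssim (1+\lambda t)^{-a-1/2}\quad\text{for } t\le 1,
\]
with $a=\alpha_0+\operatorname{Re}\mu$. This gives decay $(\lambda t)^{-(d-1)/2-\operatorname{Re}\mu}$, and square-summability in $t$ holds exactly when $\operatorname{Re}\mu>1-d/2$.

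For $t\ge 1$, use the Harish-Chandra expansion. It gives $|\varphi^{(a,b)}_\lambda(t)|\lesssim e^{-\rho t}\,|c(\lambda)|$-type bounds, uniformly in $\lambda$ with the appropriate Plancherel weight. Combined with the exponential factors in $C(t,\mu)$ and a Sobolev embedding in $t$ over unit intervals $[T,T+1]$, this yields an exponentially decaying $L^2$ norm that is summable in $T$, as in Ionescu's result. Taking a finite sum over dyadic pieces, the $L^2$ bound holds for all $\operatorname{Re}\mu>1-d/2$, with growth at most $e^{c|\operatorname{Im}\mu|^2}$.

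\textbf{Step 3: interpolation.} Linearise by setting $t=t(x)$ measurable, and apply Stein's analytic interpolation to $\mu\mapsto M^\mu_{t(x)}$, using the $\Gamma(\mu)$ normalisation to control growth in $\operatorname{Im}\mu$.

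For $1<p\le 2$, interpolate between $\operatorname{Re}\mu=1+\epsilon$ (weak $(1,1)$, in Marcinkiewicz/Stein--Weiss form) and $\operatorname{Re}\mu=1-d/2+\epsilon$ (on $L^2$). This gives $\operatorname{Re}\mu>1-d+d/p$.

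For $2\le p\le\infty$, interpolate between $L^2$ at $1-d/2+\epsilon$ and $L^\infty$ at $\operatorname{Re}\mu=\epsilon$. This gives $\operatorname{Re}\mu>(1-d/2)\cdot\frac 2p=\frac{2-d}{p}$.

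\textbf{Main obstacle.} The hardest step is the uniform large-$t$ $L^2$ bound. The Jacobi function with shifted parameters must be controlled uniformly in $\lambda\in\mathbb R$ and in complex $\mu$, including small $\lambda$ where $c(\lambda)$ is singular, and the maximal function over $t\in[1,\infty)$ must still be summable. I would try first to handle it with the Harish-Chandra series together with the Stanton--Tomas uniform asymptotics for $\varphi^{(a,b)}_\lambda$.
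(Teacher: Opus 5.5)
Your overall architecture matches the paper's almost step for step:
\begin{itemize}
\item the Koornwinder fractional-integral identity giving $m_t^\mu(\lambda)=\frac{e^{2\mu t}}{2^{3\mu}}\frac{\Gamma(\alpha_0+1)}{\Gamma(\alpha_0+\mu+1)}\phi_\lambda^{(\alpha_0+\mu,\beta_0+\mu)}(t)$;
\item for $t\ge t_0$, the Harish-Chandra expansion, dyadic pieces in $\lambda$, and a sup-in-$t$ bound on $[T,T+1]$ giving $e^{-\rho T}(1+T)2^{-(\operatorname{Re}\mu+\alpha_0)j}$, with the $j=0$ piece handled by $|m_t^\mu(\lambda)|\le C(1+t)e^{-\rho t}$ (this is how the paper disposes of your small-$\lambda$ worry);
\item Str\"omberg domination for $\operatorname{Re}\mu\ge1$;
\item the uniform $L^1$ kernel bound giving $L^\infty$ when $\operatorname{Re}\mu>0$;
\item the same two interpolations.
\end{itemize}

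The genuine difference is the small-radius $L^2$ bound. The paper passes to normal coordinates and uses $\cosh2t-\cosh2r=2\sinh(t+r)\sinh(t-r)$ to rewrite $M_t^\mu$ locally as analytic families $(h(t,x;z')-z_d)_+^{\mu-1}/\Gamma(\mu)$ of curved hypersurface means. It then quotes Sogge's local maximal theorem and patches via homogeneity of $X$. You stay spectral and run Stein's Euclidean argument through the spherical Plancherel theorem, using Bessel-type bounds $|\phi_\lambda^{(a,b)}(t)|\lesssim(1+|\lambda|t)^{-\operatorname{Re}a-1/2}$ for $t\le1$. Your route is self-contained (on $L^2$ it is just Plancherel, so no $g$-function theory is needed) and uses one multiplier for all radii. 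The cost is that you must prove these small-$t$ bounds, and their $t$-derivative versions, for the continued parameters $(\alpha_0+\mu,\beta_0+\mu)$ with $\operatorname{Re}\mu<0$ and admissible growth in $\operatorname{Im}\mu$. The paper's black box avoids this.

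Three points in your sketch need repair, though none is fatal.

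\emph{First,} the displayed additive inequality (an $H^1$ embedding in $\log t$) loses half a derivative. On a piece with $t|\lambda|\sim 2^j$ it gives only $\|\sup_t|F_j|\|_2\lesssim 2^{j(1-\delta)}$, where $\delta=\frac{d-1}{2}+\operatorname{Re}\mu$. That yields only $\operatorname{Re}\mu>\frac{3-d}{2}$. Reaching the threshold $1-\frac d2$ requires the multiplicative form $\sup|F|^2\lesssim|F(t_1)|^2+\|F\|_2\|F'\|_2$ (or $H^s$ with $s>\frac12$) applied piece by piece, which gives $2^{j(\frac12-\delta)}$. The paper uses exactly this on $[T,T+1]$, and your large-$t$ step needs it too. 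For the same reason, your sentence that square-summability holds exactly when $\operatorname{Re}\mu>1-\frac d2$ is off. In $dt/t$, $m$ is square-integrable iff $\delta>0$ and $t\partial_t m$ iff $\delta>1$; the sharp range is the midpoint.

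\emph{Second,} the low-frequency piece $t|\lambda|\lesssim 1$ is not automatically Hardy--Littlewood dominated as it is in $\mathbb R^d$. With a compactly supported cutoff $\psi_0(t\lambda)$, the kernel is not in $L^1(X)$, because bi-invariant $L^1$ kernels have transforms holomorphic in $|\operatorname{Im}\lambda|<\rho$. You have three ways out:
\begin{itemize}
\item split the kernel into a local part $\lesssim t^{-d}(1+r/t)^{-N}$ and a tail $\lesssim e^{-\rho r}(1+r)^{-N}$, the latter bounded on $L^2$ by Herz's criterion;
\item use a spatial cutoff, as the paper does with $\psi_T$;
\item use an analytic cutoff such as $e^{-t^2(\lambda^2+\rho^2)}$, controlled by the heat maximal function.
\end{itemize}

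\emph{Third,} the standard form of Stein's analytic interpolation theorem needs strong-type endpoints. Instead of weak $(1,1)$, use the $L^{p_0}$ bound for every $p_0>1$ supplied by Str\"omberg's theorem and let $p_0\downarrow 1$, as the paper does with $L^{1+r}$.
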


The idea of the proof follows the approach of Ionescu and makes use of the Harish--Chandra series expansion together with oscillatory integral estimates. For small radii, we apply Sogge's result on local maximal functions associated with hypersurfaces. For large radii, the argument relies on the exponential decay arising from spherical analysis on $G/K$. Thus, the proof reflects the two distinct geometric regimes of a noncompact rank-one symmetric space: the local Euclidean-type behaviour near the origin and the exponential behaviour at infinity.
We have the following necessary condition:
\begin{theorem}\label{thm:necessary}
    Suppose $\mu \in \R$ and $\mathcal M^\mu $ is bounded on $L^p(G/K)$ for $1 < p < \infty$. Then we must have $\mu > 1 - d + \frac{d}{p}$.
\end{theorem}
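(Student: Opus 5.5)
We prove the necessary condition by testing $\mathcal M^\mu$ against the indicator of a small geodesic ball and showing that the resulting lower bound on $\|\mathcal M^\mu f\|_{L^p}$ forces $\mu>1-d+\frac dp$. This is the standard Euclidean counterexample of Stein, transplanted to small scales, where $X$ is locally Euclidean.

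\textbf{Setup.} Since $\mu$ is real, the kernel $(\cosh 2t-\cosh 2d(x,y))^{\mu-1}$ is nonnegative. Hence the only cancellation comes from the analytic continuation when $\mu\le 0$. In that range the continued operator is a finite combination of radial derivatives of a positive average, or equivalently it can be written through the Abel/Jacobi transform. We handle this separately below.

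Fix $0<\delta\ll 1$ and let $f=\chi_{B(o,\delta)}$, so that $\|f\|_p\asymp \delta^{d/p}$. We consider points $x$ with $1\le d(x,o)\le 2$ and choose $t=d(x,o)+\delta'$, where $\delta'\in(0,\delta)$ is chosen so that the sphere $S(x,t)$ sits near the boundary of $B(o,\delta)$.

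\textbf{Lower bound for $\mu>0$.} In geodesic polar coordinates about $x$, the ball $B(o,\delta)$ occupies a set of radii $r\in[t-2\delta,t]$ of width about $\delta$. For each such $r$ the angular slice has measure about $\delta^{d-1}$, since the slice is a cap of angular radius about $\delta$ on a sphere of radius about $1$. Near $r=t$ we have $\cosh 2t-\cosh 2r\asymp (t-r)$, and $C(t,\mu)\asymp 1$ because $t\in[1,3]$. Therefore
\[
M_t^\mu f(x)\gtrsim \int_0^{\delta}s^{\mu-1}\delta^{d-1}\,ds\asymp \delta^{\mu+d-1}.
\]
The set of such $x$ has measure $\asymp 1$, so $\|\mathcal M^\mu f\|_p\gtrsim \delta^{\mu+d-1}$. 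Comparing with $\|f\|_p\asymp \delta^{d/p}$ and letting $\delta\to 0$ gives $\mu+d-1\ge d/p$.

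\textbf{Lower bound for $\mu\le 0$.} For $\mu\le0$, in the range $\mu>1-d$ where the operator is defined, we use the analytic continuation. Writing $s=t-r$, the continued radial integral against the distribution $s_+^{\mu-1}/\Gamma(\mu)$ applied to the profile $\phi(s)=|S(x,t-s)\cap B(o,\delta)|$ is Riemann–Liouville fractional differentiation of order $-\mu$ at the boundary point. This profile behaves like $\delta^{d-1}\,\psi(s/\delta)$, with $\psi$ having a square-root-type edge behaviour, whose fractional derivative of order $-\mu$ scales as $\delta^{\mu}$. Choosing the offset $\delta'$ suitably keeps this quantity bounded below by $c\,\delta^{\mu+d-1}$, so the same conclusion $\mu+d-1\ge d/p$ follows.

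\textbf{Strict inequality.} To exclude the endpoint $\mu=1-d+\frac dp$, we use a logarithmic refinement instead of a single ball. Either take $f(y)=d(y,o)^{-d/p}\,|\log d(y,o)|^{-\gamma}\chi_{B(o,1/2)}$ with $1/p<\gamma\le1$, so that $f\in L^p$, or sum dyadic balls. For $1\le d(x,o)\le2$ each dyadic shell $d(y,o)\sim 2^{-j}$ contributes about $2^{-j(\mu+d-1)}\,2^{jd/p}\,j^{-\gamma}$, and at the endpoint this is $j^{-\gamma}$. One must choose $t$ depending on $x$ so that these shell contributions are taken near their respective edges, which gives a divergent sum. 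An alternative is to show the local blow-up $\mathcal M^\mu f(x)\gtrsim |\log d(x,o)|^{1-\gamma}\,d(x,o)^{-d/p}$ for $x$ near $o$ by rescaling the Euclidean Stein example, which fails to be in $L^p$. This is the route I would pursue: for small $x$ and $t$ the kernel agrees with the Euclidean one up to factors $1+O(t^2)$.

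\textbf{Main obstacles.} I expect two difficulties. The first is the lower bound for $\mu\le 0$, where positivity is lost and a single radius must be chosen carefully. The second is the endpoint exclusion. Both reduce to Stein's Euclidean argument via a local comparison of $M_t^\mu$ with its Euclidean counterpart for small $t$, which I would make precise using normal coordinates.
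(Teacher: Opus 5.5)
\textbf{Verdict: there is a genuine gap.} The theorem's content is the strict inequality, and your proposal does not establish it when the endpoint is nonpositive.

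\textbf{What works.} Your small-ball computation is correct and gives the non-strict bound $\mu\ge 1-d+\frac dp$ for $\mu>0$. For non-integer $\mu<0$ you do not need the fractional-derivative discussion at all; incidentally, its edge exponent is $\frac{d-1}{2}$, not a square root unless $d=2$. Simply take $t=d(x,o)+2\delta$. Then $B(o,\delta)$ sits strictly inside $B(x,t)$ at distance $\sim\delta$ from the sphere. There the continued kernel is the honest function $\Gamma(\mu)^{-1}C(t,\mu)(\cosh 2t-\cosh 2d(x,y))^{\mu-1}$, which has constant sign.

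\textbf{Where it breaks.} Your shell count $2^{-j(\mu+d-1)}2^{jd/p}j^{-\gamma}$ holds simultaneously for all $j$ only at the single radius $t=d(x,o)$, the sphere through the singular point. One $t$ cannot place each shell ``near its own edge.'' With $t=d(x,o)$ the argument does settle the endpoint when that endpoint is positive, i.e.\ $p<\frac{d}{d-1}$. But $1-d+\frac dp\le 0$ for every $p\ge\frac d{d-1}$. In that range the sphere $S(x,d(x,o))$ slices through every shell, the kernel near it is a distribution rather than a positive function, and you give no argument. The ``local blow-up by rescaling Stein's example'' is asserted rather than derived, and it meets the same obstruction.

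\textbf{The missing idea is the paper's cone construction.} Restrict the test function to a thin cone $\Gamma$ with vertex $eK$ aimed at $a_{1/2}$. Take $x\in B_{c_1}(a_{1/2})$ and $t=|x|$, and set $f_\delta(y)=|y|^{1-d-\mu}/\log(1/|y|)$ on $(B_{1/2}\setminus B_\delta)\cap\Gamma$.
\begin{enumerate}
\item The support then lies strictly inside $B(x,|x|)$ with $|x|-d(x,y)\sim|y|$. So the kernel is a genuine function of one sign, comparable to $|y|^{\mu-1}$.
\item Polar coordinates give $|M^\mu_{|x|}f_\delta(x)|\gtrsim\int_\delta^{1/10}\frac{dr}{r\log(1/r)}\sim\log\log\frac1\delta$.
\item Meanwhile $\|f_\delta\|_p=O(1)$ for every $\mu\le1-d+\frac dp$.
\end{enumerate}
Because the exponent of $f_\delta$ is tied to $\mu$, this handles the endpoint and everything below it at once. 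It needs no split into $\mu>0$ and $\mu\le0$, and no analysis of the analytic continuation.

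\textbf{Your concern about the continuation is not idle, though.} The paper's positivity bound carries the factor $1/|\Gamma(\mu)|$, so it is vacuous for $\mu\in\{0,-1,-2,\dots\}$. For example, $M^0_{|x|}f_\delta(x)$ is a spherical mean of a function that vanishes on the sphere. At those values the sphere must actually meet the support, as in your small-ball step or in Stein's example without the cone. For $\mu=0$, $p=\frac d{d-1}$, one has $\int_{S(x,|x|)}|y|^{1-d}/\log(1/|y|)\,d\sigma=\infty$.
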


In the next section we give necessary preliminaries and in section $3$ we prove our main theorems.

\section{Preliminaries}
The letters $\mathbb N$, $\mathbb Z$, $\R$, and $\C$ will respectively denote the set of all natural numbers, the ring of integers, and the fields of real and complex numbers. For $ z \in \C $, we use the notation $\operatorname{Re} z$  for real part of $z$. We shall follow the standard practice of using the letters $C$, $C_1$, $C_2$, etc., for positive constants, whose value may change from one line to another. For two nonnegative function $f, g$ we say $f\asymp g$ if there exists $C_1, C_2>0$ such that $C_1 f(x)\leq g(x)\leq C_2 f(x)$. 

   \subsection{Symmetric spaces} Here, we review some general facts about describing the necessary preliminaries regarding semisimple Lie groups and harmonic analysis on Riemannian symmetric spaces. Most of these are already known and can be found, for example, in \cite{GV, Helgason1984}. To make the article self-contained, we shall gather only those results  used throughout this paper.

Let $ G $ be a  noncompact connected semisimple  real rank one Lie group with finite center, with its Lie algebra $\mathfrak g$. Let $ \theta $ be a Cartan involution of $ \mathfrak{g} $ and $ \mathfrak{g} =\mathfrak{k+p} $ be the associated Cartan decomposition. Let $ K=\exp \mathfrak{k} $ be a maximal compact subgroup of $ G $ and let $ X =G/K $ be an associated symmetric space with origin $ \textbf{0} =\{K\} $.  Let $ \mathfrak{a} $ be a maximal abelian subspace of $ \mathfrak{p} $. Since the group $ G $ is of real rank one,  dim $\mathfrak{a} =1 $.  Let $ \Sigma $ be the set of nonzero roots of the pair $ (\mathfrak{g,a}) $, and let $ W $ be the associated Weyl group. For rank one case, it is well known that  either $ \Sigma =\{-\gamma,\gamma\} $ or $ \{-2\gamma,-\gamma,\gamma,2\gamma \} $, where $ \gamma $ is a positive root  and the Weyl group $ W $ associated to $ \Sigma  $ is  \{-Id, Id\}, where Id is the identity operator. Let $ \mathfrak{a}^+ =\{ H \in \mathfrak{a} : \alpha(H)>0 \} $ be a positive Weyl chamber, and let $ \Sigma^+ $ be the corresponding set of positive roots.  In our case, $ \Sigma ^+ = \{\alpha\}$ or $ \{ \alpha, 2\alpha\} $. For any root $ \beta \in \Sigma  $, let $ \mathfrak{g}_\beta $ be the root space associated to $ \beta $. Let 
			\bes 
			\mathfrak{n} =\sum_{\beta \in \Sigma^+} \mathfrak{g}_\beta.\ees  Also let 
			\bes  N =\exp  \mathfrak{n}.
			\ees
The group $ G $ has an Iwasawa decomposition \bes G= K (\exp \mathfrak{a})N,
			\ees and a Cartan decomposition 
			\bes G=K(\exp \mathfrak{a}^+) K.\ees This decomposition is unique. For each $ g \in G $, we denote $ H(g) \in \mathfrak{a} $
			and $ g^+ \in \ol {\mf{a}^+} $ are the unique elements such that
			\be\label{defn:Hg}
			g=k \exp H(g) n,\quad k\in K, n\in N,
			\ee  and 
\be\label{defn:gplus}
g=k_1 \exp (g^+) k_2,\quad k_1, k_2\in K.
\ee		

Let  $H_0$ be the unique element in $\mathfrak a$ such that $\alpha(H_0)=1$ and through this we identify $\mathfrak a$ with $\R$ as $t\leftrightarrow tH_0$ and $\mathfrak a_+= \{H\in \mathfrak a\mid \alpha(H)>0\}$ is identified with the set of positive real numbers.   We also identify $\mathfrak a^*$ and its complexification $\mathfrak a^*_\C$ with $\R$  and $\C$ respectively by $t\leftrightarrow t\alpha$ and  $z\leftrightarrow z\alpha$, $t\in \R$, $z\in \C$. Let $A=\exp\mathfrak{a}=\left\{a_t:=\exp(t H_0)\mid t\in\R\right\}$ and $A^+=\left\{a_t\mid t>0\right\}$.
 Let $m_1=\dim \mathfrak g_\gamma$ and $m_{2}=\dim \mathfrak g_{2\gamma}$ where $\mathfrak g_\gamma$ and
 $\mathfrak g_{2\gamma}$ are the root spaces corresponding to $\gamma$ and $2\gamma$ respectively. As usual, then $\rho=\frac 12(m_1+2m_2)$ denotes the half sum of the positive roots.
By abuse of notation we will denote $\rho(H_0)=\frac 12(m_1+2m_2)$ by $\rho$.

Let $ dg, dk$ and $dn$ be the Haar measures on the groups $ G, K$ and  $N$ respectively. We  normalize $ dk $ such that $ \int_K dk =1 $.  We have the following integral formulae corresponding to the Iwasawa and Cartan
decomposition respectively, which holds for any integrable function $f$:

\begin{equation}\label{eqn:integral-decom-nbar-a}
				\int_{G}f(g) dg =\int_{N} \int_{\R} \int_{K} f(k a_t n)e^{2\rho t} \,dk \,dt\, dn,
			\end{equation}
	and		
			\begin{equation}
\int_Gf(g)dg=\int_K\int_{\R^+}\int_K f(k_1a_tk_2) \Delta(t)\,dk_1\,dt\,dk_2. \label{polar}
\end{equation} 
where $\Delta(t)=(2\sinh t)^{m_1 + m_2}(2\cosh t)^{m_2}$.

\subsection{Fourier transform}For a sufficiently nice function $f$ on $X$, its Helgason Fourier transform $\widetilde{f}$ is a function defined on $\C \times K$ given by 
\be \label{defn:hft}
\widetilde{f}(\lambda,k) = \int_{G} f(g) e^{(i\lambda- \rho)H(g^{-1}k)}\, dg, \quad \lambda \in \C, k \in K, 
\ee
whenever the integral exists (\cite[p. 199]{Helgason1984}). 

It is known that if $f\in L^1(X)$ then $\widetilde{f}(\lambda, k)$ is a continuous function of $\lambda \in \R$, for almost every $k\in K$. If in addition $\widetilde{f}\in L^1(\R\times K, |c(\lambda)|^{-2}~d\lambda~dk)$ then the following Fourier inversion holds,
\be\label{hft}
f(gK)= |W|^{-1}\int_{\R\times K}\widetilde{f}(\lambda, k)~e^{-(i\lambda+\rho)H(g^{-1}k)} ~ |c(\lambda)|^{-2}d\lambda~dk,
\ee
for almost every $gK\in X$ (\cite[Chapter III, Theorem 1.8, Theorem 1.9]{Helgason1984}), where $c(\lambda)$ is the Harish Chandra's $c$-function given by \bes c(\lambda)=\frac{2^{\rho-i\lambda} \Gamma(\frac{m_1 + m_2 +1}{2})\Gamma(i\lambda)}{\Gamma(\frac{\rho+ i\lambda}{2}) \Gamma(\frac{m_1+ 2}{4} + \frac{i\lambda}{2})}.\ees
It is normalized such that $c(-i\rho)=1$.

 Moreover, $f \mapsto \widetilde{f}$ extends to an isometry of $L^2(X)$ onto $L^2(\R\times K, |c(\lambda)|^{-2}~d\lambda~dk )$ (\cite[Chapter III, Theorem 1.5]{Helgason1984}).

A function $f$ is called $K$-biinvariant if \bes f(k_1xk_2)=f(x) \text{ for all } x\in G, k_1, k_2\in K.\ees  We denote the set of all $K$-biinvariant functions by $\mathcal F (G//K)$.
Let $\mathbb D(G/K)$ be the algebra of $G$-invariant differential operators on $X$. The elementary spherical functions $\phi$ are $C^\infty$ functions and are joint eigenfunctions of all $D\in\mathbb D(G/K)$ for some complex eigenvalue $\lambda(D)$. That is $$D\phi=\lambda(D)\phi, \quad D\in\mathbb D(G/K).$$They are parametrized by $\lambda\in\C$. The algebra $\mathbb D(G/K)$ is generated by the Laplace-Beltrami operator $ L$. Then we have, for all $\lambda\in\C, \varphi_\lambda$ is a $C^\infty$ solution of  
\begin{equation}\label{phi-lambda}
L\phi=-(\lambda^2 + \rho^2)\phi.
\end{equation}

			For any $\lambda\in \C$  the elementary spherical function $\varphi_\lambda$ has the following integral representation
$$\varphi_\lambda(x)=\int_K e^{-(i\lambda+\rho)H(xk)}\,dk \text{ for all } x\in G.$$
The spherical transform $\what{f}$ of a  suitable $K$-biinvariant function $f$ is defined by the formula:
$$\what{f}(\lambda)=\int_Gf(x)\varphi_\lambda(x^{-1})\,dx.$$
It is easy to check that for suitable $K$-biinvariant function $f$ on $G$; its Helgason Fourier transform $\widetilde{f}$ reduces to the spherical transform $\what{f}$.
For a suitable function $f$ on $X$ and a $K$-invariant function $g$, we have
\bes
\widetilde{f\ast g}(\lambda, k)=\widetilde{f}(\lambda, k)\widehat{g}(\lambda).
\ees
We now list down some well-known properties of the elementary spherical functions which are important for us (\cite[ Prop 3.1.4 and Chapter 4, \S 4.6]{GV}, \cite[Lemma 1.18, p. 221]{Helgason1984}).

\begin{enumerate}
\item[(1)] $\varphi_\lambda(g)$ is $K$-biinvariant in $g\in G$,  $\varphi_\lambda=\varphi_{-\lambda}$, $\varphi_\lambda(g)=\varphi_\lambda(g^{-1})$.
\item[(2)] $\varphi_\lambda(g)$ is $C^\infty$ in $g\in G$ and holomorphic in $\lambda\in\C$.
\item[(3)] The following inequality holds:
\bes
e^{-\rho t} \leq \varphi_0(a_t)\leq \left(1+|t|\right)~e^{-\rho t}, \, t\geq 0.
\ees
\item[(4)] $|\varphi_\lambda(x)|\leq 1$ for all $x\in G$ if and only if $\lambda\in S_1=\left\{\lambda\in \C \mid |\Im\lambda|\leq \rho\right\}$.
\item[(5)] For all $\lambda\in \R$ we have
\bes
|\varphi_\lambda(g)| \leq  \varphi_0(g)\leq 1.
\ees
\end{enumerate}

A Jacobi function (see \cite{Koornwinder1984}) $\phi_\lambda^{(\alpha, \beta)} (\alpha, \beta, \lambda\in\C, \alpha\not=-1, -2, \cdots)$ is defined as the even $C^\infty$ function on $\R$ such that $\phi_\lambda^{(\alpha, \beta)}(0)=1$ and it satisfies the following differential equation 
\begin{equation}\label{eqn-1}
\left(\frac{d^2}{dt^2} + ((2\alpha +1)\coth t + (2\beta +1)\tanh t)\frac{d}{dt} + \lambda^2 + (\alpha +\beta +1)^2\right)\phi_\lambda^{(\alpha, \beta)}(t)=0.
\end{equation}
 This Jacobi function can be written as the hypergeometric function:

\begin{equation}
\phi_\lambda^{(\alpha, \beta)}(t)= {}_2F_1\left(\frac{\alpha +\beta + 1-i\lambda}{2}, \frac{\alpha + \beta + 1+ i\lambda}{2}; \alpha +1; -\sinh^2t\right).
\end{equation}

The $A$-radial part of the Laplace-Beltrami operator $L$ on $X$ is given by 
\begin{equation} 
L_Af(a_t):=\frac{d^2}{dt^2}f (a_t)+\left((m_1+ m_2)\coth t + m_2\tanh t\right)\frac{d}{dt}f(a_t), t>0. 
\end{equation}
Therefore equation (\ref{phi-lambda}) reduces to 
\begin{equation}\label{diff-hyper}
\frac{d^2\phi}{dt^2}+\left((m_1+ m_2)\coth t + m_2\tanh t\right)\frac{d\phi}{dt} + (\lambda^2 + \rho^2)\phi=0, t>0.
\end{equation}

Therefore the spherical function can be given in terms of Jacobi-function as, \begin{eqnarray}\label{phi}\varphi_\lambda(a_t)=\phi^{(\alpha_0, \beta_0)}_\lambda(t)= {}_2F_1\left(\frac{\rho-i\lambda}{2}, \frac{\rho+ i\lambda}{2}; \frac{m_1 + m_2 +1}{2}; -\sinh^2t\right),
\end{eqnarray}
where \bes \alpha_0=\frac{m_1 + m_2-1}{2} \text{ and } \beta_0=\frac{m_2-1}{2}.\ees

Let \bes
\Delta_{\alpha, \beta}(s) = \sinh^{2\alpha + 1}(s) \cosh^{2\beta + 1}(s).
\ees

The Harish-Chandra $c$-function (associated with parameter $(\alpha, \beta)$) is given by (\cite{Koornwinder1984}, 2.18)
\bes
 c_{\alpha,\beta}(\lambda) := 
    \dfrac{2^{\rho - i\lambda} \, \Gamma(\alpha + 1) \, \Gamma(i \lambda)}
    {\Gamma \left(\dfrac{i\lambda + \rho}{2} \right) \,\Gamma \left(\dfrac{i\lambda + \alpha - \beta +1}{2} \right)
    }.
\ees
We need the estimates for the Harish-Chandra $c$-functions. 
\begin{lemma}[\cite{Ionescu2000}, Prop. A1]\label{lemma:estimate for lambda^-1 c(lambda)^-1}
    Suppose $\lambda \in \R$ and $N\in \mathbb N$. The Harish-Chandra function $c(\lambda)$ satisfies 
    \bes
        |c(\lambda)|^{-2} = (c(\lambda) \, c(-\lambda))^{-1}.
    \ees    
    The function $\lambda^{-1}c(-\lambda)^{-1}$ belongs to $C^\infty(\R)$ and
    \bes
        \left |  \dfrac{\partial^k}{\partial\lambda^k} \left ( \lambda^{-1}c(-\lambda)^{-1} \right ) \right | \le C_k \, (1 + |\lambda|)^{\alpha_0 - 1/2 - k}.
    \ees
    for all integers $k \in [0,N]$.
\end{lemma}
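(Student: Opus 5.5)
Since $\Delta(t)$ is not yet available in closed form, we prove the lemma by reducing the statement about $c(\lambda)$ to standard Gamma-function asymptotics. The plan has three steps: check the symmetry identity, show smoothness of $\lambda^{-1}c(-\lambda)^{-1}$, and then get the symbol-type bounds from Stirling's formula.

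\emph{Step 1: the identity.} For real $\lambda$ we use $\overline{\Gamma(z)}=\Gamma(\bar z)$ and $\overline{2^{\rho-i\lambda}}=2^{\rho+i\lambda}$. These give $\overline{c(\lambda)}=c(-\lambda)$. Hence $|c(\lambda)|^{2}=c(\lambda)c(-\lambda)$, which is the first claim.

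\emph{Step 2: smoothness.} We write
\[
\lambda^{-1}c(-\lambda)^{-1}=\frac{\Gamma\bigl(\frac{\rho-i\lambda}{2}\bigr)\Gamma\bigl(\frac{m_1+2}{4}-\frac{i\lambda}{2}\bigr)}{2^{\rho+i\lambda}\Gamma\bigl(\frac{m_1+m_2+1}{2}\bigr)\,\lambda\,\Gamma(-i\lambda)}.
\]
Using $\lambda\Gamma(-i\lambda)=i\,\Gamma(1-i\lambda)$, the denominator becomes $i\,2^{\rho+i\lambda}\Gamma(\frac{m_1+m_2+1}{2})\Gamma(1-i\lambda)$.

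Now $1/\Gamma$ is entire. On the real line the numerator Gamma factors have arguments with real parts $\rho/2>0$ and $(m_1+2)/4>0$, so they are holomorphic in a strip around $\R$. Therefore the whole expression is real-analytic on $\R$, in particular $C^\infty$.

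\emph{Step 3: the derivative bounds.} We split into bounded and large $\lambda$.

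\begin{itemize}
\item For $|\lambda|\le 1$, the bounds follow from continuity of all derivatives on a compact set.
\item For $|\lambda|\ge1$, the expression extends holomorphically to a strip $|\operatorname{Im}\lambda|<\delta$. By Cauchy's estimates on discs of radius $\delta/2$, the $k$-th derivative is controlled by sup bounds on the strip. Cauchy's estimates alone give no gain of $(1+|\lambda|)^{-k}$, so for the extra decay we use Stirling's asymptotic expansion instead.
\end{itemize}

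Concretely, Stirling's formula gives
\[
\log\Gamma(a+iy)=(a+iy-\tfrac12)\log(iy)-iy+\tfrac12\log 2\pi+\sum_j b_j (iy)^{-j}+\dots,
\]
valid uniformly for $a$ in compacta, together with its term-by-term differentiated versions. Applying it to the three Gamma factors, the logarithm of the expression equals
\[
\Bigl(\tfrac{\rho}{2}-\tfrac12+\tfrac{m_1+2}{4}-\tfrac12-\tfrac12\Bigr)\log|\lambda| + i\,\Phi(\lambda)+ S(\lambda),
\]
where:
\begin{itemize}
\item the oscillatory phase terms $\pm\frac{i\lambda}{2}\log|\lambda|$ cancel against the $i\lambda\log|\lambda|$ from $\Gamma(1-i\lambda)$, so $\Phi(\lambda)$ is a linear function of $\lambda$ (the remaining constants combine with $2^{-i\lambda}$);
\item $S$ is a symbol of order $0$, i.e. $|S^{(k)}(\lambda)|\le C_k|\lambda|^{-k}$.
\end{itemize}

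The real exponent is $\frac{m_1+2m_2}{4}+\frac{m_1+2}{4}-\frac32=\frac{m_1+m_2}{2}-1=\alpha_0-\frac12$. Hence the modulus is of size $|\lambda|^{\alpha_0-1/2}$, and the expression has the form
\[
e^{i(a\lambda+b)}\,|\lambda|^{\alpha_0-1/2}e^{S(\lambda)}.
\]

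\emph{Main obstacle.} The difficulty is the linear phase $e^{ia\lambda}$: each derivative of it costs only a constant factor, not $|\lambda|^{-1}$. So the cleanest route is to check that $a=0$, i.e. that the $\lambda$-linear terms cancel. Those terms are
\begin{itemize}
\item $-i\lambda\log2$ from $2^{-i\lambda}$;
\item from Stirling, $+\frac{i\lambda}{2}$ twice for the two numerator factors and $-i\lambda$ for $1/\Gamma(1-i\lambda)$;
\item $\frac{-i\lambda}{2}\log\frac12$ twice, from writing $\log(i\lambda/2)$ in the numerator factors.
\end{itemize}
These sum to $i\lambda(-\log 2+\log2)=0$, so the phase is constant.

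With $a=0$, the Leibniz rule and Faà di Bruno's formula applied to $|\lambda|^{\alpha_0-1/2}e^{S}$ yield
\[
\left|\frac{\partial^k}{\partial\lambda^k}\bigl(\lambda^{-1}c(-\lambda)^{-1}\bigr)\right|\le C_k(1+|\lambda|)^{\alpha_0-1/2-k}.
\]
Alternatively, the lemma can simply be quoted from \cite{Ionescu2000}, Prop. A1, as indicated.
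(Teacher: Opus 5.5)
Your proposal is correct and takes essentially the paper's approach: the paper gives no proof of its own but quotes Ionescu's Proposition A1, which rests on the explicit Gamma-function formula for $c(\lambda)$ and Stirling's formula, just as the paper justifies its companion estimates for $c^\mu$. One cancellation you leave implicit does hold: the factors $e^{-\pi|\lambda|/4}$ from the two numerator Gamma functions offset the $e^{-\pi|\lambda|/2}$ from $\Gamma(1-i\lambda)$, so writing the expression as a constant phase times $|\lambda|^{\alpha_0-1/2}e^{S(\lambda)}$, with $S$ a symbol of order $0$, is accurate.
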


In the analogy of $c(\lambda)$, we define another function 
\bes
    c^\mu(\lambda) := c_{\alpha_0+\mu,\beta_0+\mu}(\lambda) = \dfrac{2^{\rho+2\mu - i\lambda} \, \Gamma(\alpha_0 + \mu + 1) \, \Gamma(i \lambda)}
    {\Gamma \left(\dfrac{i\lambda + \rho}{2} + \mu \right) \,\Gamma \left(\dfrac{i\lambda + \alpha_0 - \beta_0 +1}{2} \right)
    } \quad \text{for Re }\mu > -\alpha_0 -1.
\ees
We observe that $c^0=c$.
For the function $c^\mu$, we have the following result.
\begin{lemma}\label{lemma:estimate for lambda c^mu(lambda)}
    Let $ \operatorname{Re }\mu > -\alpha_0 - 1$ and $k\in \mathbb{N}$. The function $\lambda\, c^\mu(\pm\lambda)$ belongs to $C^\infty(\R)$ and
    \bes
        \left |  \dfrac{\partial^k}{\partial\lambda^k} \left ( \lambda\, c^\mu(\pm\lambda) \right ) \right | \le C_k \, (1 + |\lambda|)^{1/2 - \alpha_0 - \operatorname{Re }\mu - k}.
    \ees
\end{lemma}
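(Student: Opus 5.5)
We estimate $\lambda\, c^\mu(\lambda)$ directly from the explicit Gamma-function formula, following the same route as for the estimate of $\lambda^{-1}c(-\lambda)^{-1}$ in Lemma~\ref{lemma:estimate for lambda^-1 c(lambda)^-1}. Write
\bes
\lambda\, c^\mu(\lambda) = 2^{\rho+2\mu}\Gamma(\alpha_0+\mu+1)\, 2^{-i\lambda}\, \frac{\lambda\,\Gamma(i\lambda)}{\Gamma\left(\frac{i\lambda+\rho}{2}+\mu\right)\Gamma\left(\frac{i\lambda+\alpha_0-\beta_0+1}{2}\right)}.
\ees
First we show smoothness. Since $\lambda\Gamma(i\lambda)=-i\,\Gamma(1+i\lambda)$, the product $\lambda\,\Gamma(i\lambda)$ is holomorphic in a strip around $\R$. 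The reciprocals $1/\Gamma(\cdot)$ are entire, so $\lambda c^\mu(\lambda)$ extends holomorphically to a strip $|\Im\lambda|<\delta$. The constant $\Gamma(\alpha_0+\mu+1)$ is finite because $\operatorname{Re}\mu>-\alpha_0-1$. The same reasoning applies to $\lambda c^\mu(-\lambda)$. This gives $C^\infty(\R)$.

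Next we get the size bound on $\R$, splitting into $|\lambda|\le 1$ and $|\lambda|\ge1$. On the compact set $|\lambda|\le1$, each derivative is bounded by continuity of the holomorphic extension. For $|\lambda|\ge1$ we use Stirling's formula in the form $|\Gamma(a+ib)|\asymp |b|^{a-1/2}e^{-\pi|b|/2}$, uniform for $a$ in compacts and $|b|\ge1$, together with $|2^{-i\lambda}|=1$. The exponential factors cancel: the numerator contributes $e^{-\pi|\lambda|/2}$ and each of the two denominator factors contributes $e^{-\pi|\lambda|/4}$. Counting powers:
\begin{itemize}
\item the numerator $\Gamma(1+i\lambda)$ gives $|\lambda|^{1/2}$;
\item the denominator gives $|\lambda/2|^{\frac{\rho}{2}+\operatorname{Re}\mu-\frac12}\,|\lambda/2|^{\frac{\alpha_0-\beta_0+1}{2}-\frac12}$.
\end{itemize}
Using $\rho=\alpha_0+\beta_0+1$, the total exponent is $\tfrac12+\tfrac12-\tfrac{\alpha_0+\beta_0+1}{2}-\operatorname{Re}\mu-\tfrac{\alpha_0-\beta_0+1}{2}+\tfrac12$. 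This simplifies to $\tfrac12-\alpha_0-\operatorname{Re}\mu$, which gives the case $k=0$.

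For derivatives, the cleanest route is Cauchy's estimate. For $|\lambda|\ge1$, the holomorphic extension satisfies the same bound $|\lambda c^\mu(\lambda')|\le C(1+|\lambda|)^{1/2-\alpha_0-\operatorname{Re}\mu}$ on a disc of fixed radius $r<\delta$ around $\lambda$. Here Stirling is applied uniformly with bounded real parts. Cauchy's estimate then only yields $(1+|\lambda|)^{\cdots}$ without the extra decay $(1+|\lambda|)^{-k}$, so it is not enough on its own.

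To gain the factor $|\lambda|^{-k}$, we instead write $\lambda c^\mu(\lambda)=e^{\Phi(\lambda)}$, where $\Phi$ is a combination of $\log\Gamma$'s plus the linear term $-i\lambda\log 2$. By the asymptotic expansion of the digamma function, $\psi(z)=\log z+O(|z|^{-1})$, with $\psi^{(j)}(z)=O(|z|^{-j})$ for $j\ge1$. Hence
\bes
\Phi'(\lambda)=i\left[\psi(1+i\lambda)-\tfrac12\psi\left(\tfrac{i\lambda+\rho}{2}+\mu\right)-\tfrac12\psi\left(\tfrac{i\lambda+\alpha_0-\beta_0+1}{2}\right)\right]-i\log 2 .
\ee
The leading logarithms cancel, since $\log(i\lambda)-\tfrac12\log(i\lambda/2)-\tfrac12\log(i\lambda/2)=\log2$, and this exactly cancels the $-i\log 2$. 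Therefore $\Phi'(\lambda)=O(|\lambda|^{-1})$ and $\Phi^{(j)}(\lambda)=O(|\lambda|^{-j})$. By Faà di Bruno's formula, each derivative of $e^\Phi$ gains a factor $|\lambda|^{-1}$, which gives the claimed bound for all $k$.

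The main obstacle is this derivative gain: the precise cancellation of the logarithmic terms in $\Phi'$, including the $2^{-i\lambda}$ factor, must be verified carefully. Everything else is routine Stirling bookkeeping.
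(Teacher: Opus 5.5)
Your approach is essentially the paper's: the paper proves this lemma in one line by citing Stirling's formula, and your argument is a faithful expansion of that. It has three parts: holomorphy of $\lambda c^\mu$ in a strip, the exponent count giving $\tfrac12-\alpha_0-\operatorname{Re}\mu$, and a logarithmic-derivative/Fa\`a di Bruno argument for the derivative bounds. The exponent bookkeeping is correct, and so is the cancellation of the logarithms against $-i\log 2$ in $\Phi'$.

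\textbf{The gap.} One step does not follow as you justify it, and it is exactly the step you flagged as the main obstacle. For $j\ge 2$, $\Phi^{(j)}$ is a linear combination of $\psi^{(j-1)}$ evaluated at points of size $\asymp|\lambda|$. So the bound $\psi^{(m)}(z)=O(|z|^{-m})$ only gives $\Phi^{(j)}=O(|\lambda|^{-(j-1)})$. Fed into Fa\`a di Bruno, this loses a power of $|\lambda|$ for every $k\ge 2$.

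\textbf{The fix.} The bound $\Phi^{(j)}=O(|\lambda|^{-j})$ is true, but it needs the cancellation at every order, not just for $\Phi'$. The quickest route uses large discs:
\begin{enumerate}
\item Take $|\lambda|$ large and $|\lambda'-\lambda|\le|\lambda|/2$. Then all three Gamma arguments stay in a sector $|\arg z|\le\pi-\varepsilon$ with $|z|\asymp|\lambda|$.
\item The same computation you did then gives $\Phi'(\lambda')=O(|\lambda|^{-1})$ uniformly on that disc.
\item Cauchy's estimate on a disc of radius $|\lambda|/2$ yields $\Phi^{(j)}(\lambda)=O(|\lambda|^{-j})$.
\end{enumerate}
Alternatively, use $\psi^{(m)}(z)=(-1)^{m-1}(m-1)!\,z^{-m}+O(|z|^{-m-1})$ and check that the leading terms cancel. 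They do, because the leading part of $\Phi'$ is identically zero.

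\textbf{A smaller point.} $\lambda c^\mu(\lambda)$ can vanish on $\R$. When $\operatorname{Re}\mu=-\rho/2$, which is permitted since $-\rho/2>-\alpha_0-1$, the factor $1/\Gamma(\tfrac{i\lambda+\rho}{2}+\mu)$ vanishes at $\lambda=-2\operatorname{Im}\mu$. So $\Phi=\log(\lambda c^\mu)$, and the uniform Stirling bound with $b=\lambda/2+\operatorname{Im}\mu$, should only be used for $|\lambda|\ge R_\mu$. The range $|\lambda|\le R_\mu$ should be handled by smoothness and compactness, rather than cutting at $|\lambda|=1$. With these two adjustments the proof is complete.
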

\begin{proof}
    This is a consequence of Stirling's formula (\cite{titchmarsh1939}, Chap. 4).
\end{proof}

\begin{lemma}\label{lemma:estimate for c^mu(lambda)/c(lambda)}
    Let $ \operatorname{Re }\mu > -\alpha_0 - 1$ and $k\in \mathbb{N}$. The function $\frac{c^\mu(\lambda)}{c(\pm\lambda)}$ belongs to $C^\infty(\R)$ and
    \bes
        \left |  \dfrac{\partial^k}{\partial\lambda^k} \left( \frac{c^\mu(\lambda)}{c(\pm\lambda)} \right) \right | \le C_k \, (1 + |\lambda|)^{ - \operatorname{Re }\mu - k}.
    \ees
\end{lemma}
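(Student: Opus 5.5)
The proof reduces, via Stirling's formula, to the explicit Gamma-function form of both $c$-functions. Using the definitions,
\[
\frac{c^\mu(\lambda)}{c(\lambda)}=2^{2\mu}\,\frac{\Gamma(\alpha_0+\mu+1)}{\Gamma(\alpha_0+1)}\cdot\frac{\Gamma\left(\frac{i\lambda+\rho}{2}\right)}{\Gamma\left(\frac{i\lambda+\rho}{2}+\mu\right)}.
\]
The factors $\Gamma(i\lambda)$ and $\Gamma\left(\frac{i\lambda+\alpha_0-\beta_0+1}{2}\right)$ cancel, since $\alpha_0,\beta_0$ are unchanged in that second factor. The leading constant is finite for $\operatorname{Re}\mu>-\alpha_0-1$.

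\textbf{Smoothness and decay for $c^\mu/c$.} For real $\lambda$ and $\rho>0$, the poles of $\Gamma\left(\frac{i\lambda+\rho}{2}\right)$ are never reached. The zeros of $1/\Gamma\left(\frac{i\lambda+\rho}{2}+\mu\right)$ do not matter, since $1/\Gamma$ is entire. So the quotient is $C^\infty$, in fact real-analytic, on $\R$.

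For the decay I will use the standard consequence of Stirling's formula: in a sector $|\arg z|\le\pi-\delta$,
\[
\frac{\Gamma(z)}{\Gamma(z+\mu)}=z^{-\mu}\bigl(1+O(|z|^{-1})\bigr),
\]
and this asymptotic can be differentiated termwise. Taking $z=\frac{i\lambda+\rho}{2}$, which has $\operatorname{Re}z=\rho/2>0$, gives $|z^{-\mu}|\asymp(1+|\lambda|)^{-\operatorname{Re}\mu}$, because $\arg z$ stays bounded. Each $\lambda$-derivative gains a factor $(1+|\lambda|)^{-1}$, either by Cauchy estimates on discs of radius $\asymp 1+|\lambda|$ inside the sector or by differentiating $\log\Gamma(z)-\log\Gamma(z+\mu)$ via the digamma asymptotics. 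On compact sets, smoothness alone gives the bound.

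\textbf{The case $c^\mu(\lambda)/c(-\lambda)$.} Here there is no cancellation, so I factor
\[
\frac{c^\mu(\lambda)}{c(-\lambda)}=\frac{c^\mu(\lambda)}{c(\lambda)}\cdot\frac{c(\lambda)}{c(-\lambda)}.
\]
The first factor was handled above. For the second, write $c(\lambda)/c(-\lambda)=\bigl(\lambda c(\lambda)\bigr)\cdot\bigl(\lambda^{-1}c(-\lambda)^{-1}\bigr)$. Lemma \ref{lemma:estimate for lambda c^mu(lambda)} with $\mu=0$ bounds the $k$-th derivative of the first piece by $(1+|\lambda|)^{1/2-\alpha_0-k}$. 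Lemma \ref{lemma:estimate for lambda^-1 c(lambda)^-1} bounds that of the second piece by $(1+|\lambda|)^{\alpha_0-1/2-k}$.

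By Leibniz, $c(\lambda)/c(-\lambda)$ is a smooth symbol of order $0$. Equivalently, it has modulus one on $\R$ and a phase $\Gamma(i\lambda)\Gamma(\cdot)\Gamma(\cdot)/\Gamma(-i\lambda)\cdots$, whose derivatives are controlled by Stirling in the same way. Leibniz then gives the claimed bound $C_k(1+|\lambda|)^{-\operatorname{Re}\mu-k}$.

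\textbf{Main obstacle.} The delicate point is the derivative bounds uniformly for large $|\lambda|$. I would handle this with the Cauchy-estimate argument: the function is holomorphic in a strip $|\operatorname{Im}\lambda|<\rho/2$, with the $\Gamma(\pm i\lambda)$ poles at $0$ cancelling, since $c(\lambda)/c(-\lambda)$ has a removable singularity there. The size bound there is $\lesssim(1+|\lambda|)^{-\operatorname{Re}\mu}$, which yields the derivative estimates directly on larger regions after rescaling.
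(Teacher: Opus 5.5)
Your argument is correct and is essentially the paper's: the paper simply invokes Stirling's formula, and your explicit cancellation $c^\mu(\lambda)/c(\lambda)=2^{2\mu}\frac{\Gamma(\alpha_0+\mu+1)}{\Gamma(\alpha_0+1)}\,\Gamma\bigl(\tfrac{i\lambda+\rho}{2}\bigr)/\Gamma\bigl(\tfrac{i\lambda+\rho}{2}+\mu\bigr)$, together with the Leibniz reduction of $c(\lambda)/c(-\lambda)=\bigl(\lambda c(\lambda)\bigr)\bigl(\lambda^{-1}c(-\lambda)^{-1}\bigr)$ to the two preceding lemmas, is exactly the work that citation leaves implicit. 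Drop the closing ``strip'' remark, though: $c(\lambda)/c(-\lambda)$ is not holomorphic in $|\operatorname{Im}\lambda|<\rho/2$ once $\rho>2$ (the pole of $\Gamma(i\lambda)$ at $\lambda=i$ is not cancelled), and Cauchy estimates on a fixed-width strip would not give the $(1+|\lambda|)^{-k}$ gain anyway, whereas your Leibniz argument already settles that case.
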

\begin{proof}
    This is a consequence of Stirling's formula (\cite{titchmarsh1939}, Chap. 4).
\end{proof}

\section{Main theorem}
We consider the following analytic family of operators
\bes 
M_{t}^{\mu} f(x) = \frac{1}{\Gamma(\mu)}    C(t,\mu)   \int_{B(x, t)} f(y) \left( \cosh 2t - \cosh 2d(x, y) \right)^{\mu - 1} d\sigma(y)
\ees
where,
\bes
    C(t,\mu) = \frac{4 \, e^{2\mu t}}{\sinh^{2\alpha_0 + 2 \mu}(t) \, \cosh^{2 \beta_0 + 2\mu}(t)}.   
\ees
and 
\bes
\mathcal M^\mu f(x)
=\sup_{t>0}|M_t^\mu f(x)|,
\ees
We first show that this spherical maximal function turns out to usual maximal function for the case when $\mu=0$, if $m_1 + m_2>1$. 
If $y = k a_s \cdot 1$, then \bes \cosh 2d(1, y) = \cosh 2s.\ees
Therefore, 
\beas
     M_{t}^{\mu} f(1) &=& C(t, \mu)  \frac{1}{\Gamma(\mu)} \int_0^t \int_K f(k a_s \cdot 1) \left( \cosh 2t - \cosh 2s \right)^{\mu - 1}\sinh^{2\alpha_0 +1}(s) \cosh^{2\beta_0+1}(s) \, ds \, dk \\ \\
    &=& C(t, \mu)    \frac{1}{\Gamma(\mu)} \int_0^t f^{\sharp}(s) \left( \cosh 2t - \cosh 2s \right)^{\mu - 1} \Delta_{\alpha_0,\beta_0}(s) \, ds.
 \eeas

where, 
\bes
    f^{\sharp}(s) = \int_K f(k a_s \cdot 1) \, dk \text{ and }
    \Delta_{\alpha_0,\beta_0}(s) = \sinh^{2\alpha_0 +1}(s) \cosh^{2\beta_0+1}(s).
\ees

We first observe that
\beas
   & \frac{1}{\Gamma(\mu)} \int_0^t f^\sharp(s) \left( \cosh 2t - \cosh 2s \right)^{\mu - 1} \Delta_{\alpha_0,\beta_0}(s) \, ds &
    \\
    &= - \frac{1}{\mu \, \Gamma(\mu)} \int_0^t f^\sharp(s) \frac{d}{ds} \left( \left( \cosh 2t - \cosh 2s \right)^\mu \right) \frac{\Delta_{\alpha_0,\beta_0}(s) \, ds}{2 \sinh 2s}.& 
    \eeas
    The last term is equal to
     \begin{multline*}
       - \frac{1}{\Gamma(\mu + 1)} \left[ f^\sharp(s)    \frac{\Delta_{\alpha_0,\beta_0}(s)}{2 \sinh 2s}    \left( \cosh 2t - \cosh 2s \right)^\mu \right]_{s=0}^t \\
     + \frac{1}{\Gamma(\mu + 1)}    \int_0^t \frac{d}{ds} \left( f^\sharp(s)    \frac{\Delta_{\alpha_0,\beta_0}(s)}{2 \sinh 2s} \right) \left( \cosh 2t - \cosh 2s \right)^\mu ds,   
     \end{multline*}
     
 which is equal to    
      \bes 
      \frac{1}{\Gamma(\mu + 1)} \int_0^t \frac{d}{ds} \left( f^\sharp(s) \frac{\Delta_{\alpha_0,\beta_0}(s)}{2 \sinh 2s} \right) \left( \cosh 2t - \cosh 2s \right)^\mu ds,
\ees
since $\alpha_0 >0$.
Observe that the following term
\bes \int_0^t \frac{d}{ds} \left( f^\sharp(s) \frac{\Delta_{\alpha_0,\beta_0}(s)}{2 \sinh 2s} \right) \left( \cosh 2t - \cosh 2s \right)^\mu ds,
\ees
converges for $\mu > -1$. Therefore we have,
\bes
    M_{t}^\mu f(1) = C(t, \mu) \frac{1}{4 \Gamma(\mu + 1)} \int_0^t \frac{d}{ds} \left( f^\sharp(s) \sinh^{2\alpha_0}(s) \cosh^{2\beta_0}(s) \right) \left( \cosh 2t - \cosh 2s \right)^\mu ds.
\ees

Thus, 
\bes
    \left. M_{t}^{\mu} f(1) \right|_{\mu = 0} =  \frac{C(t, 0)}{4 \Gamma(1)} \int_0^t \frac{d}{ds} \left( f^{\sharp}(s) \sinh^{2\alpha_0}(s) \cosh^{2\beta_0}(s) \right) ds
    = \int_K f(k a_{t} \cdot 1) \, dk.
\ees

Using translation by the elements of the group $G$, we have
\bes
M_{t}^{0} f(x) = \int_K f(u k a_{t} 1) \, dk,
\ees
where $x = u\cdot 1$, which
is the spherical mean on the rank one symmetric spaces.

It is also easy to see that the spherical maximal function $\mathcal M^\mu$ becomes comparable to centered Hardy-Littlewood maximal function for the case when $\mu=1$.

We recall that $\alpha_0=\frac{m_1 + m_2-1}{2}$, and  $\beta_0=\frac{m_2-1}{2}$. Also the spherical function $\varphi_\lambda$ is equal to the Jacobi function $\phi_\lambda^{(\alpha_0, \beta_0)}$ (see (\ref{phi})).

\begin{lemma}
   The operator $\mathcal M_t^\mu$ is a multiplier operator and for $\operatorname{Re}\mu>0$, the multiplier is given in terms of Jacobi function. More precisely, for $\operatorname{Re}\mu>0$, \bes
    m_t^\mu(\lambda) = \frac{e^{2\mu t}}{2^{3\mu}}\, \frac{\Gamma(\alpha_0+1)}{\Gamma(\alpha_0+\mu+1)} \, \phi_\lambda^{(\alpha_0+\mu, \, \beta_0+\mu)}(t).
\ees
\end{lemma}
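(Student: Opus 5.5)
The plan is to show first that $M_t^\mu$ is right convolution with a $K$-biinvariant kernel, and then to compute the spherical transform of that kernel by a fractional integration formula for Jacobi functions. By the $G$-invariance of the Riemannian distance, the computation of $M_t^\mu f(1)$ carried out above transfers to every point $x=u\cdot 1$. This gives $M_t^\mu f = f * k_t^\mu$, where $k_t^\mu$ is the $K$-biinvariant function
\bes
k_t^\mu(a_s)=\frac{C(t,\mu)}{\Gamma(\mu)}\,(\cosh 2t-\cosh 2s)^{\mu-1}\,\mathbbm{1}_{[0,t)}(s),
\ees
and the measure is normalized so that radial integrals carry the density $\Delta_{\alpha_0,\beta_0}(s)\,ds$, exactly as in the computation of $M_t^\mu f(1)$. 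For $\operatorname{Re}\mu>0$ this kernel is integrable. By the identity $\widetilde{f\ast g}=\widetilde f\,\what g$, the operator is therefore a multiplier operator with multiplier
\bes
m_t^\mu(\lambda)=\what{k_t^\mu}(\lambda)=\frac{C(t,\mu)}{\Gamma(\mu)}\int_0^t \phi^{(\alpha_0,\beta_0)}_\lambda(s)\,(\cosh 2t-\cosh 2s)^{\mu-1}\Delta_{\alpha_0,\beta_0}(s)\,ds,
\ees
using $\varphi_\lambda(a_s)=\phi^{(\alpha_0,\beta_0)}_\lambda(s)$ from (\ref{phi}). For $\operatorname{Re}\mu\le 0$, the statement that $M_t^\mu$ is a multiplier operator then follows by analytic continuation in $\mu$, via the integrated-by-parts form derived above.

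The key step is the Bateman/Koornwinder fractional integral identity for Jacobi functions: for $\operatorname{Re}\mu>0$ and $t>0$,
\bes
\sinh^{2\alpha+2\mu}t\,\cosh^{2\beta+2\mu}t\;\phi^{(\alpha+\mu,\beta+\mu)}_\lambda(t)=c_{\alpha,\mu}\int_0^t \phi^{(\alpha,\beta)}_\lambda(s)\,\Delta_{\alpha,\beta}(s)\,(\cosh 2t-\cosh 2s)^{\mu-1}\,ds,
\ees
where $c_{\alpha,\mu}$ is a constant independent of $\lambda$ and $t$. I would either quote this from \cite{Koornwinder1984} or verify it directly, as follows. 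Substitute $u=\sinh^2 s$ and $x=\sinh^2 t$, so that $\cosh 2t-\cosh 2s=2(x-u)$ and $\Delta_{\alpha,\beta}(s)\,ds=\tfrac12 u^{\alpha}(1+u)^{\beta}\,du$. The right side then becomes an Euler-type fractional integral of ${}_2F_1(a,b;\alpha+1;-u)$ against $u^\alpha(1+u)^\beta(x-u)^{\mu-1}$. The factor $(1+u)^\beta$ can be absorbed by the Pfaff transformation, or the identity can be checked termwise on the power series.

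An alternative verification, which I would try first if the hypergeometric bookkeeping becomes messy, is to show that both sides, divided by $\sinh^{2\alpha+2\mu}t\cosh^{2\beta+2\mu}t$, are even smooth solutions of (\ref{eqn-1}) with parameters $(\alpha+\mu,\beta+\mu)$ that are regular at $0$. Differentiating under the integral requires care because of the endpoint singularity; I would first integrate by parts as in the $\mu=0$ computation above. Uniqueness of the regular solution then reduces everything to matching the values at $t=0$.

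Finally, the constant is fixed by letting $t\to0$ with $\phi_\lambda(0)=1$. As $s,t\to0$ we have $\cosh 2t-\cosh 2s\approx 2(t^2-s^2)$, and
\bes
\int_0^t s^{2\alpha+1}(t^2-s^2)^{\mu-1}\,ds=\tfrac12\,t^{2\alpha+2\mu}\,B(\alpha+1,\mu).
\ees
This gives $c_{\alpha,\mu}=2^{2-\mu}\,\Gamma(\alpha+\mu+1)/(\Gamma(\alpha+1)\Gamma(\mu))$, up to the exact powers of $2$, which must be tracked carefully. Multiplying the identity by $C(t,\mu)/\Gamma(\mu)$ cancels the factor $\sinh^{2\alpha_0+2\mu}t\,\cosh^{2\beta_0+2\mu}t$ and leaves $e^{2\mu t}$ times a power of $2$ times $\Gamma(\alpha_0+1)/\Gamma(\alpha_0+\mu+1)\cdot\phi^{(\alpha_0+\mu,\beta_0+\mu)}_\lambda(t)$. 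Matching the powers of $2$ against the normalizations of $C(t,\mu)$ and of the radial measure should give the stated factor $2^{-3\mu}$. The main obstacle is the fractional integral identity itself, together with the consistent tracking of the powers of $2$ coming from $\cosh 2t-\cosh 2s=2(\sinh^2t-\sinh^2 s)$ and from the measure normalization.
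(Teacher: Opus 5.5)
Your route is the same as the paper's. You use $G$-invariance to get a $K$-biinvariant kernel, then polar coordinates to get
\[
m_t^\mu(\lambda)=\frac{C(t,\mu)}{\Gamma(\mu)}\int_0^t(\cosh 2t-\cosh 2s)^{\mu-1}\,\phi_\lambda^{(\alpha_0,\beta_0)}(s)\,\Delta_{\alpha_0,\beta_0}(s)\,ds,
\]
and then the Bateman--Koornwinder fractional integral, which the paper simply quotes. The gap is the one step you left open. The powers of $2$ do not produce $2^{-3\mu}$, and your own computation already shows this.

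Your constant $c_{\alpha,\mu}=2^{2-\mu}\Gamma(\alpha+\mu+1)/(\Gamma(\alpha+1)\Gamma(\mu))$ is exact, not merely correct ``up to powers of 2''. Once the identity holds with a constant independent of $t$ and $\lambda$, the limit $t\to0$ determines that constant completely, using $\phi_\lambda(0)=1$ and $\cosh 2t-\cosh 2s=2(\sinh^2t-\sinh^2s)$. Multiplying by $C(t,\mu)/\Gamma(\mu)=4e^{2\mu t}/(\Gamma(\mu)\sinh^{2\alpha_0+2\mu}t\cosh^{2\beta_0+2\mu}t)$ gives
\[
m_t^\mu(\lambda)=2^{\mu}\,e^{2\mu t}\,\frac{\Gamma(\alpha_0+1)}{\Gamma(\alpha_0+\mu+1)}\,\phi_\lambda^{(\alpha_0+\mu,\beta_0+\mu)}(t),
\]
not the stated factor $2^{-3\mu}$.

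Here is an independent check at $\mu=1$, with $\rho=\alpha+\beta+1$. The Jacobi equation gives $\int_0^t\phi_\lambda^{(\alpha,\beta)}\Delta_{\alpha,\beta}\,ds=-\Delta_{\alpha,\beta}(t)\,\partial_t\phi^{(\alpha,\beta)}_\lambda(t)/(\lambda^2+\rho^2)$. Combined with $\partial_t\phi^{(\alpha,\beta)}_\lambda=-\frac{\lambda^2+\rho^2}{4(\alpha+1)}\sinh 2t\,\phi^{(\alpha+1,\beta+1)}_\lambda$, this yields $m_t^1=\frac{2e^{2t}}{\alpha_0+1}\phi^{(\alpha_0+1,\beta_0+1)}_\lambda(t)$. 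The statement instead gives $\frac{e^{2t}}{8(\alpha_0+1)}\phi^{(\alpha_0+1,\beta_0+1)}_\lambda(t)$.

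The discrepancy comes from the paper, not your method. The identity it quotes, with factor $\Gamma(\alpha+1)\Gamma(\mu)/(2^{3\mu+1}\Gamma(\alpha+\mu+1)\sinh 2t)$, is correct for Koornwinder's weight $(2\sinh t)^{2\alpha+1}(2\cosh t)^{2\beta+1}$. The paper instead uses $\Delta_{\alpha,\beta}(t)=\sinh^{2\alpha+1}t\cosh^{2\beta+1}t$ on both sides, which introduces exactly a spurious factor $2^{-4\mu}$.

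No choice of radial measure rescues $2^{-3\mu}$. The density $\Delta_{\alpha_0,\beta_0}$ is forced by requiring $M_t^0$ to be the spherical mean, given the factor $4$ in $C(t,\mu)$. Switching to $(2\sinh t)^{m_1+m_2}(2\cosh t)^{m_2}$ would only multiply everything by the $\mu$-independent factor $2^{2\rho}$.

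So your plan, carried through, proves the corrected formula with $2^{\mu}$, and you should state that rather than try to force $2^{-3\mu}$. The two formulas differ by a factor of modulus $2^{4\operatorname{Re}\mu}$, which is bounded on vertical strips. Hence the error is harmless for the later multiplier estimates and the complex interpolation.
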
 
\begin{proof}

We write,
\beas
    M_{t}^{\mu} f(x) &=& \frac{1}{\Gamma(\mu)} C(t, \mu) \int_{B(x, t)} f(y) \left( \cosh 2t - \cosh 2d(x, y) \right)^{\mu - 1} \, d\sigma(y)\\
   & =& \int_{G/K} \mathcal{K}'(x,y) f(y) \, dy,
\eeas
where,
\bes
    \mathcal{K}'(x, y) = \frac{1}{\Gamma(\mu)} C(t, \mu) \, \mathbbm{1}_{B(x, t)}(y) \left( \cosh 2t - \cosh 2 d(x, y) \right)^{\mu-1}.
\ees
Clearly, $M_t^\mu$ commutes with the action of $G$. To see this, we need to show that 
\bes
    M_t^\mu (L_g f)(x) = L_g (M_t^\mu f)(x) \quad \text{ for all } x \in G/K \text{ and } g \in G
\ees
where, $L_g f(x) = f(g^{-1}x)$ for all $x\in G/K$ and $g\in G$.
Then
\bes
    M_t^\mu (L_g f)(x)
    = \frac{C(t,\mu)}{\Gamma(\mu)} \int_{B(x, t)} f(g^{-1}y) \left( \cosh 2t - \cosh 2 d(x,y) \right)^{\mu-1} \,  d\sigma(y).
\ees
Using the change of variable $z = g^{-1}y $, we get
\bes
    M_t^\mu (L_g f)(x)
    = \frac{C(t,\mu)}{\Gamma(\mu)} \int_{B(g^{-1}x, t)} f(z) \left( \cosh 2t - \cosh 2 d(g^{-1}x,z) \right)^{\mu-1} \, d\sigma(z) 
    = L_g \left( M_t^\mu f \right)(x).
\ees
As $M_t^\mu$ commutes with the action of $G$, it can be shown that for all  $g \in G$, \, \bes \mathcal{K}'(gx, gy) = \mathcal{K}'(x, y),\ees for a.e. $x, y \in G/K.$ To see this
\beas
    M_t^\mu (L_g f)(x) &=& \int_{G/K} \mathcal{K}'(x, y) \, f(g^{-1}y) \, dy \\
    &=& \int_{G/K} \mathcal{K}'(x, gy) \, f(y) \, dy
\eeas
Also,
\bes
    L_g \left( M_t^\mu f \right)(x) = \int_{G/K} \mathcal{K}'(g^{-1}x, y) \, f(y) \, dy
\ees
So we have  \bes \mathcal{K}'(g^{-1}x, y) = \mathcal{K}'(x, gy),\ees for a.e. $x, y \in G/K$. Therefore, \bes \mathcal{K}'(gx, gy) = \mathcal{K}'\left(g^{-1}(gx), y\right) = \mathcal{K}'(x, y),\ees for almost every  $x, y \in G/K.$ Now we will show that $M_t^\mu$ is a Fourier multiplier operator. Consider \bes \mathcal{K}(w) = \mathcal{K}'(w \cdot 1, 1),\ees for $w\in G$. It is easy to see that $\mathcal{K}$ is bi-invariant. Let $x,y\in G/K$. Then, $x = u\cdot 1, \, y = v\cdot 1$ for some $u,v \in G$. Now 
\beas
    M_t^\mu f(v) &=& \int_G \mathcal{K}'(v \cdot 1, u \cdot 1) \, f(u) \, du \\
    &=& \int_G \mathcal{K}'(u^{-1} v \cdot 1, 1) \, f(u) \, du \\
    &=& \int_G \mathcal{K}(u^{-1} v) \, f(u) \, du \\
    &=& \mathcal{K} * f(v).
    \eeas
    
Now
\beas
    \widehat{\mathcal{K}}(\lambda) &=& \int_G \mathcal{K}(u) \, \varphi_\lambda(u^{-1}) \, du \\
    &=& \int_G \mathcal{K}'(u \cdot 1, 1) \, \varphi_\lambda(u^{-1}) \, du \\
    &=& \int_G \mathcal{K}'(1, u^{-1} \cdot 1) \, \varphi_\lambda(u^{-1}) \, du \\
    &=& \int_G \mathcal{K}'(1, u \cdot 1) \, \varphi_\lambda(u) \, du \\
    &=& \frac{C(t, \mu)}{\Gamma(\mu)}  \int_G \mathbbm{1}_{B(1, t)}(u \cdot 1) \left( \cosh 2t - \cosh 2 d(1, u \cdot 1) \right)^{\mu-1} \varphi_\lambda(u)\, du \\
    &=& \frac{C(t, \mu)}{\Gamma(\mu)}  \int_0^t (\cosh 2t - \cosh 2s)^{\mu-1} \, \varphi_\lambda(s) \, \Delta_{\alpha_0, \beta_0}(s) \, ds,
\eeas
using  polar decomposition. Thus, we obtain
\bes
    M_t^\mu f(x) = \mathcal K\ast f(x),
\ees
where,
\bes
    \widehat{\mathcal{K}}(\lambda)= m_{t}^\mu(\lambda) =\frac{1}{\Gamma(\mu)} C(t, \mu) \int_0^t \left( \cosh 2t - \cosh 2s \right)^{\mu - 1} \, \varphi_\lambda(s) \, \Delta_{\alpha_0, \beta_0}(s) \, ds.
\ees

For $\operatorname{Re}\mu>0$ and any $\alpha\geq \beta\geq -\frac{1}{2}$ we have \cite[(2.14)]{Koornwinder1975},
\bes
    \int_0^t \left( \cosh 2t - \cosh 2s \right)^{\mu - 1} \phi_\lambda^{(\alpha, \beta)}(s) \, \Delta_{\alpha, \beta}(s) \, ds =
    \frac{\Gamma(\alpha+1) \,  \Gamma(\mu)}{2^{3\mu+1} \, \Gamma(\alpha +\mu+1)  \,  \sinh 2t} \,  \Delta_{\alpha+\mu, \, \beta+\mu}(t) \, \phi_\lambda^{(\alpha+\mu, \, \beta+\mu)}(t),
    \ees 
    which is equal to 
   
   \bes
   \frac{\Gamma(\alpha+1) \, \Gamma(\mu)}{2^{3\mu+2} \, \Gamma(\alpha +\mu+1)} (\sinh t)^{2\alpha+2\mu} \, (\cosh t)^{2\beta+2\mu} \, \phi_\lambda^{(\alpha+\mu, \, \beta+\mu)}(t).
\ees
Therefore since \bes
\varphi_\lambda=\phi_\lambda^{(\alpha_0, \beta_0)},
\ees  we have for $\operatorname{Re}\mu>0$,
\beas
    m_t^\mu(\lambda) &=& \frac{1}{\Gamma(\mu)}\, C(t, \mu) \, \frac{\Gamma(\alpha_0+1) \, \Gamma(\mu)}{2^{3\mu+2} \, \Gamma(\alpha_0+\mu+1)} \, (\sinh t)^{2\alpha_0+2\mu} \, (\cosh t)^{2\beta_0+2\mu} \, \phi_\lambda^{(\alpha_0+\mu, \, \beta_0+\mu)}(t)\\ 
    &=& \frac{e^{2\mu t}}{2^{3\mu}}\, \frac{\Gamma(\alpha_0+1)}{\Gamma(\alpha_0+\mu+1)} \, \phi_\lambda^{(\alpha_0+\mu, \, \beta_0+\mu)}(t).
\eeas
\end{proof}
Now, using analytic continuation we get for $\operatorname{Re}\mu>-\alpha_0-1$,
\bes
    m_t^\mu(\lambda) = \frac{e^{2\mu t}}{2^{3\mu}}\, \frac{\Gamma(\alpha_0+1)}{\Gamma(\alpha_0+\mu+1)} \, \phi_\lambda^{(\alpha_0+\mu, \, \beta_0+\mu)}(t).
\ees
We have the following estimate of the multiplier function $m_t^\mu(\lambda)$ for $\operatorname{Re }\mu > -\alpha_0 - 1/2$.  The proof of the following lemma is similar (to the case when $\mu$=0). 
\begin{lemma}\label{lemma:estimate for m_t^mu(lambda)}
   \begin{enumerate}[label=(\roman*)] 
       \item Let $\operatorname{Re }\mu > -\alpha_0 - 1/2$ and  $\lambda \in \R$. If $t> 0 $, then
       \bes
            |m_t^\mu(\lambda)| \leq C_\mu \, (1+t) \, e^{-\rho t}.
       \ees
       \item  Let $\operatorname{Re }\mu > -\alpha_0 - 1/2$ and let $t_0>0$ be fixed. Suppose $\lambda \in \R\setminus\{0\}$ and $N\in \mathbb N$. If $t \ge t_0$, then $m_t^\mu(\lambda)$ can be written in the form
       \bes
            m_t^\mu(\lambda) = e^{-\rho t} \left(  e^{i\lambda t} \, c^\mu(\lambda) \, a_2^\mu(\lambda, t) 
            + e^{-i\lambda t}\, c^\mu(-\lambda)\, a_2^\mu(-\lambda, t)
            \right),
       \ees
        where the function $a_2^\mu(\lambda,t)$ satisfies the following inequalities
        \bes
            \left |  \dfrac{\partial^k}{\partial\lambda^k} \,
            \dfrac{\partial^l}{\partial t^l}
            \left ( a_2^\mu(\lambda , t) \right ) \right | \le C_{k,l} \, (1 + |\lambda|)^{- k}.
        \ees
        for all integers $k\in [0,N]$, $l\in \{0,1\}$ and for all $t\ge t_0$.        
   \end{enumerate}
\end{lemma}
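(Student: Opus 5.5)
Both parts will follow from the Jacobi-function representation
\[
m_t^\mu(\lambda)=\frac{e^{2\mu t}}{2^{3\mu}}\,\frac{\Gamma(\alpha_0+1)}{\Gamma(\alpha_0+\mu+1)}\,\phi_\lambda^{(\alpha_0+\mu,\,\beta_0+\mu)}(t),
\]
valid by analytic continuation for $\operatorname{Re}\mu>-\alpha_0-1$. We will then apply the known estimates for Jacobi functions with parameters $(\alpha,\beta)=(\alpha_0+\mu,\beta_0+\mu)$, following Ionescu's treatment of the case $\mu=0$ (\cite{Ionescu2000}, Appendix).

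The key observation concerns the exponential factors. The "$\rho$" attached to $(\alpha,\beta)$ is $\alpha+\beta+1=\rho+2\mu$. Hence every estimate for $\phi^{(\alpha,\beta)}_\lambda$ carries a factor $e^{-(\rho+2\mu)t}$, and the prefactor $e^{2\mu t}$ converts it exactly into $e^{-\rho t}$. This is why the normalization $C(t,\mu)$ was chosen.

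\textbf{Part (i).} When $\operatorname{Re}\mu\ge0$ and $\mu$ is real, so that $\alpha\ge\beta\ge-1/2$, we use Koornwinder's bound $|\phi_\lambda^{(\alpha,\beta)}(t)|\le\phi_0^{(\alpha,\beta)}(t)\le C(1+t)e^{-(\alpha+\beta+1)t}$ for $\lambda\in\R$. Multiplying by $|e^{2\mu t}|$ gives the claim.

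For general complex $\mu$ with $\operatorname{Re}\mu>-\alpha_0-1/2$, we argue in two ranges of $t$.
\begin{itemize}
\item For $t\le t_0$, we use the Laplace-type integral representation (or the hypergeometric series), which gives $|\phi_\lambda^{(\alpha,\beta)}(t)|\le C$ uniformly in $\lambda\in\R$ as long as $\operatorname{Re}\alpha>-1/2$. This is precisely where the hypothesis $\operatorname{Re}\mu>-\alpha_0-1/2$ enters.
\item For $t\ge t_0$, the bound follows from part (ii) together with Lemma \ref{lemma:estimate for lambda c^mu(lambda)}. Near $\lambda=0$ the two terms $c^\mu(\pm\lambda)$ individually have a pole, and the resulting cancellation produces the factor $(1+t)$. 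We handle this exactly as Ionescu does: write $\lambda c^\mu(\lambda)$ as smooth, and use that $(e^{i\lambda t}-e^{-i\lambda t})/\lambda$ is bounded by $t$.
\end{itemize}

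\textbf{Part (ii).} For $t\ge t_0$ and $\lambda\notin i\mathbb Z$, we use the Harish-Chandra expansion
\[
\phi_\lambda^{(\alpha,\beta)}(t)=c_{\alpha,\beta}(\lambda)\Phi_\lambda^{(\alpha,\beta)}(t)+c_{\alpha,\beta}(-\lambda)\Phi_{-\lambda}^{(\alpha,\beta)}(t).
\]
Here $\Phi_\lambda^{(\alpha,\beta)}(t)=(2\cosh t)^{i\lambda-\alpha-\beta-1}\,{}_2F_1\bigl(\tfrac{\alpha+\beta+1-i\lambda}{2},\tfrac{\alpha-\beta+1-i\lambda}{2};1-i\lambda;\cosh^{-2}t\bigr)$, or equivalently $e^{(i\lambda-\rho-2\mu)t}\sum_k\Gamma_k(\lambda)e^{-2kt}$.

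We set
\[
a_2^\mu(\lambda,t)=\frac{\Gamma(\alpha_0+1)}{2^{3\mu}\Gamma(\alpha_0+\mu+1)}\cdot\frac{\Gamma(\alpha_0+\mu+1)}{\Gamma(\alpha_0+1)}\cdot 2^{-2\mu}\,e^{(\rho+2\mu-i\lambda)t}\,\Phi_\lambda^{(\alpha,\beta)}(t),
\]
with the constants adjusted so that $c_{\alpha,\beta}$ becomes $c^\mu$ as defined. Once the constants are absorbed, the stated form of $m_t^\mu(\lambda)$ follows immediately. It remains to prove the symbol estimates
\[
\bigl|\partial_\lambda^k\partial_t^l\bigl(e^{-i\lambda t}(2\cosh t)^{i\lambda}\,{}_2F_1(\dots;\cosh^{-2}t)\bigr)\bigr|\le C(1+|\lambda|)^{-k}.
\]

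\textbf{Main obstacle.} The hard step is these uniform symbol bounds for the hypergeometric factor, in the region $t\ge t_0$ and $\lambda\in\R$ including near $\lambda=0$. We would handle it as follows.
\begin{itemize}
\item Note that $e^{-i\lambda t}(2\cosh t)^{i\lambda}=(1+e^{-2t})^{i\lambda}$. Its $\lambda$-derivatives produce powers of $\log(1+e^{-2t})=O(e^{-2t})$, which are harmless.
\item For the series $\sum_k\Gamma_k(\lambda)e^{-2kt}$, the coefficients are built from the recursion of Gangolli type: $\Gamma_k(\lambda)$ is rational in $\lambda$, with poles only at $\lambda\in -i\mathbb N$. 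Derivatives obey $|\partial_\lambda^j\Gamma_k(\lambda)|\le C k^{c}(1+|\lambda|)^{-j}$ for real $\lambda$.
\item Since $t\ge t_0>0$, the factor $e^{-2kt}\le e^{-2kt_0}$ makes the sum converge. Differentiating once in $t$ only introduces factors $k$ and $\lambda$-independent terms, which still satisfy the bound.
\end{itemize}
If the direct coefficient bounds become messy for complex $\alpha,\beta$, we fall back on Ionescu's Prop.\ A1 argument verbatim, with $(\alpha_0,\beta_0)$ replaced by $(\alpha_0+\mu,\beta_0+\mu)$. Only $\operatorname{Re}\alpha>-1/2$ is used there.
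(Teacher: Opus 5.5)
Your overall route matches the paper's. The paper gives no separate argument: it only says that the $\mu=0$ proof (Ionescu's Prop.~A1) carries over to $\phi_\lambda^{(\alpha_0+\mu,\beta_0+\mu)}$, with $e^{2\mu t}$ compensating the shift $\rho\mapsto\rho+2\mu$, and with $c^\mu=c_{\alpha_0+\mu,\beta_0+\mu}$ holding by definition.

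Your part (i) is sound, with one correction. For $t\le t_0$, use the Mehler-type integral of $\cos(\lambda s)$ against $(\cosh 2t-\cosh 2s)^{\alpha-1/2}$ times a bounded hypergeometric factor. Its kernel is integrable exactly when $\operatorname{Re}\alpha>-1/2$. Drop the "or hypergeometric series" alternative: the power series in $-\sinh^2 t$ is not uniform in $\lambda$. For $t\ge t_0$, combining (ii) with $|c^\mu(\pm\lambda)|\le C$ for $|\lambda|\ge 1$ and the $(h(\lambda)-h(-\lambda))/\lambda$ cancellation for $|\lambda|\le 1$ is correct.

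The step that fails as written is the first bullet of your ``main obstacle''.
\begin{itemize}
\item The $\lambda$-derivatives of $(1+e^{-2t})^{i\lambda}$ are $O(e^{-2t})$, but they have no decay in $\lambda$.
\item The factor ${}_2F_1(\cdots;\cosh^{-2}t)$ is not an order-zero symbol by itself. It behaves like $(1+e^{-2t})^{-i\lambda}$; for $\mathbb{H}^3$, $\mu=0$, it equals $\coth t\,(1+e^{-2t})^{-i\lambda}$ exactly. So its $\lambda$-derivative has size $e^{-2t}$ for every $\lambda$.
\item Hence Leibniz's rule applied to this factorization cannot produce $(1+|\lambda|)^{-k}$. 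Only the product is a symbol.
\end{itemize}

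Your second and third bullets are the right argument, but they must be applied to the full amplitude, not to the ${}_2F_1$ factor. Set $a_2^\mu(\lambda,t)=\frac{\Gamma(\alpha_0+1)}{2^{3\mu}\Gamma(\alpha_0+\mu+1)}\sum_{k\ge0}\Gamma_k(\lambda)e^{-2kt}$. No further constant is needed; the extra factor $\frac{\Gamma(\alpha_0+\mu+1)}{\Gamma(\alpha_0+1)}2^{-2\mu}$ in your formula would break the identity for $m_t^\mu$. Here $\Gamma_0=1$ and $4k(k-i\lambda)\Gamma_k=-\sum_{m=1}^k b_m\,(i\lambda-\rho-2\mu-2k+2m)\,\Gamma_{k-m}$, with $b_m=2(2\alpha+1)+2(-1)^m(2\beta+1)$ and $(\alpha,\beta)=(\alpha_0+\mu,\beta_0+\mu)$.

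The coefficient bounds you asserted also need a mechanism. Work on $\Omega=\{|\operatorname{Im}\lambda|\le\frac14(1+|\operatorname{Re}\lambda|)\}$, which avoids the poles $-i\mathbb{N}$ and on which $|k-i\lambda|\ge c\,(k+|\lambda|)$.
\begin{itemize}
\item The recursion then gives $|\Gamma_k|\le\frac{C}{k}\sum_{j<k}|\Gamma_j|$, hence $|\Gamma_k|\le Ck^{C}$ on $\Omega$.
\item Cauchy's estimates on discs of radius $\frac18(1+|\lambda|)$ centred at real $\lambda$ give $|\partial_\lambda^j\Gamma_k(\lambda)|\le C_j k^{C}(1+|\lambda|)^{-j}$.
\item Summing against $e^{-2kt_0}$ (respectively $2k\,e^{-2kt_0}$ for $l=1$) proves (ii).
\end{itemize}
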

For $\mu =0$, we shall use the notation $a_2$ for the function $a_2^0$ in the rest of the paper. Let $t_0>0$ be fixed and assume $T\ge t_0$. Let $\psi_T: \R_+ \to [0,1]$ be a smooth cutoff function such that
\begin{equation}  \nonumber
\psi_T = \left\{\begin{array}{lll}
1 & \text{ on } & [T-1/2, T+3/2] \\
0 & \text{ outside }&  [T-1, T+2].
\end{array}\right.
\end{equation}
Now, consider $t \in [T, T+1]$. 
We recall that
\bes
    \mathcal{K}(x) = \frac{1}{\Gamma(\mu)} C(t, \mu) \, \mathbbm{1}_{B(x, t)}(e) \left( \cosh 2t - \cosh 2 d(x, e) \right)^{\mu-1}.
\ees
We observe that, $\mathcal K$ is radial and the support of  $\mathcal{K}$ is contained in $[0, t]$. 
In terms of Fourier inversion formula we have,
\bes
    \mathcal{K}(r)= \mathcal{F}^{-1}\left(m_t^\mu\right)(r)
    = \int_0^\infty m_t^\mu(\lambda) \, \phi_\lambda^{(\alpha_0, \beta_0)}(r) \, |c(\lambda)|^{-2} \, d\lambda.
\ees
Thus for all $r$ in the support of $\mathcal{K}$, we have $\psi_T(r) \equiv 1$. Thus whenever $r\in [0,t]$, we can write,
\bes
    \mathcal{K}(r) = \psi_T(r) \, \int_0^\infty m_t^\mu(\lambda) \, \phi_\lambda^{(\alpha_0, \beta_0)}(r) \, |c(\lambda)|^{-2} \, d\lambda.
\ees
Let $\eta_0$ be an even, smooth cutoff function on $\R$ such that 
\begin{equation}  \nonumber
\eta_0(s) = \left\{\begin{array}{lll}
1 & \text{ if } & |s|\le 1 \\
0 & \text{ if } &  |s|\ge 2
\end{array}\right.
\end{equation}
and for $j=1,2,\cdots,$ let
\bes
\eta_j(s) = \eta_0 \left(\frac{s}{2^j}\right) - \eta_0 \left(\frac{s}{2^{j-1}}\right).
\ees
Clearly, supp $\eta_j \subset \{ s\in \R : |s|\in [2^{j-1}, 2^{j+1}] \}$ for any $j\ge 1$.
We define for any $r\in \R$,
\be 
    A_{t,j}^\mu(r) = \psi_T(r) \, \int_0^\infty \eta_j(\lambda) \, m_t^\mu(\lambda) \, \phi_\lambda^{(\alpha_0, \beta_0)}(r) \, |c(\lambda)|^{-2} \, d\lambda = \psi_T(r)\, \mathcal{F}^{-1}(\eta_j m_t^\mu)(r).
\ee
Then clearly support of $A_{t,j}^\mu$ contained in $[T-1,T+2]$. We also observe that $\mathcal{K}(r) = \sum_{j\ge 0} A_{t,j}^\mu(r)$ whenever $r\in [0,t]$ and for all $r$, $\mathcal{K}(r) \le \sum_{j\ge 0} A_{t,j}^\mu(r)$. We need the following lemma, whose proof is an adaptation of Ionescus's argument.

\begin{lemma}\label{lemma: sup_t[T,T+1] |f*A_t,j^mu|}
    Let $t_0> 0$ be fixed. Then for $T\ge t_0$, $j\ge 0$, we have
    \bes
        \left\| \, \sup_{t \in [T, T+1]} |f * A_{t,j}^\mu| \, \right\|_2 \le C \, 2^{-(\operatorname{Re } \mu + \alpha_0)\,j} \, e^{-\rho T} (1+T) \, \|f\|_2.
    \ees
\end{lemma}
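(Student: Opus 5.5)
Following Ionescu's argument, I would control the supremum over $t\in[T,T+1]$ by a Sobolev embedding in $t$. For a $C^1$ function $F$ on $[T,T+1]$,
\bes
\sup_{t\in[T,T+1]}|F(t)|^2\le |F(T)|^2+2\Big(\int_T^{T+1}|F(t)|^2dt\Big)^{1/2}\Big(\int_T^{T+1}|\partial_tF(t)|^2dt\Big)^{1/2}.
\ees
Apply this with $F(t)=f*A^\mu_{t,j}(x)$, integrate in $x$, and use Cauchy--Schwarz. The left-hand side of the lemma is then bounded by
\bes
\sup_{t\in[T,T+1]}\|f*A^\mu_{t,j}\|_2^2+2\sup_{t}\|f*A^\mu_{t,j}\|_2\,\sup_t\|f*\partial_tA^\mu_{t,j}\|_2 .
\ees
So it suffices to prove two fixed-$t$ $L^2$ estimates, uniform in $t\in[T,T+1]$. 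For the function itself I would aim for $C\,2^{-(\operatorname{Re}\mu+\alpha_0+1/2)j}e^{-\rho T}(1+T)\|f\|_2$. For the $t$-derivative I would aim for the same bound times $2^{j}$. The geometric mean of these is $2^{-(\operatorname{Re}\mu+\alpha_0)j}e^{-\rho T}(1+T)$, which is exactly the claimed bound. Since $A^\mu_{t,j}$ is $K$-biinvariant, Plancherel for the Helgason transform gives $\|f*A\|_2\le\|\widehat A\|_{L^\infty(\R)}\|f\|_2$. So everything reduces to bounding the spherical transforms of $A^\mu_{t,j}$ and $\partial_tA^\mu_{t,j}$ uniformly in $\lambda\in\R$.

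The difficulty is the cutoff $\psi_T$. The multiplier of $A^\mu_{t,j}$ is not $\eta_jm^\mu_t$ but the spherical transform of $\psi_T\cdot\mathcal F^{-1}(\eta_jm^\mu_t)$. For $j=0$ I would use Lemma~\ref{lemma:estimate for m_t^mu(lambda)}(i): there $|\eta_0m_t^\mu|\le C(1+T)e^{-\rho T}$, and the kernel $\mathcal F^{-1}(\eta_0m_t^\mu)(r)$ is controlled pointwise by $(1+T)e^{-\rho T}\varphi_0(r)$ up to constants. Multiplying by $\psi_T$ and pairing with $\varphi_\lambda$ over $[T-1,T+2]$ with density $\Delta(r)\sim e^{2\rho r}$ gives the bound, because $\varphi_0(r)^2\Delta(r)\lesssim(1+r)^2$ there. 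I would handle this crudely through the $L^1$-norm of the kernel against $\varphi_0$, using the Kunze--Stein / Herz principle $\|f*k\|_2\le\|f\|_2\int|k|\varphi_0$.

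For $j\ge1$, and $r$ in the support of $\psi_T$ (so $r\ge T-1\ge t_0-1$; I would take $t_0$ large or separately treat small $r$), I would expand $\phi^{(\alpha_0,\beta_0)}_\lambda(r)$ in the Harish-Chandra series. This is the $\mu=0$ case of Lemma~\ref{lemma:estimate for m_t^mu(lambda)}(ii), written as $e^{-\rho r}(e^{i\lambda r}c(\lambda)a_2(\lambda,r)+e^{-i\lambda r}c(-\lambda)a_2(-\lambda,r))$. I would expand $m^\mu_t$ likewise via (ii). Combined with $|c(\lambda)|^{-2}=(c(\lambda)c(-\lambda))^{-1}$, the kernel becomes $e^{-\rho r}e^{-\rho t}$ times a sum of oscillatory integrals
\bes
\int\eta_j(\lambda)\,e^{i\lambda(\pm t\pm r)}\,\frac{c^\mu(\pm\lambda)}{c(\pm\lambda)}\,a_2^\mu(\pm\lambda,t)\,a_2(\pm\lambda,r)\,d\lambda .
\ee
By Lemma~\ref{lemma:estimate for c^mu(lambda)/c(lambda)} and the symbol bounds on $a_2^\mu$ and $a_2$, the amplitude is a symbol of order $-\operatorname{Re}\mu$ on $|\lambda|\sim2^j$.

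Integrating by parts $N$ times in $\lambda$ gives the kernel bound
\be
|A^\mu_{t,j}(r)|\le C\,2^{j(1-\operatorname{Re}\mu)}e^{-\rho(r+t)}(1+2^j|r-t|)^{-N}.
\ee
The $t$-derivative costs an extra factor $2^j$, using the $l=1$ bounds on $a_2^\mu$. On the non-oscillating terms with phase $t+r$ the kernel decays rapidly in $2^j$. The crude $L^1(\varphi_0\,\Delta)$ estimate gives $2^{j(1-\operatorname{Re}\mu)}\cdot2^{-j}(1+T)e^{-\rho T}=2^{-j\operatorname{Re}\mu}(1+T)e^{-\rho T}$. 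This loses $2^{j(\alpha_0+1/2)}$ compared with the target.

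This is the main obstacle. To recover the loss I would not use $L^1$ but an $L^2$ argument. I would view $f\mapsto f*A^\mu_{t,j}$ as the multiplier $\eta_jm^\mu_t$ followed by multiplication of the kernel by $\psi_T$. The estimate $|\eta_jm^\mu_t|\lesssim 2^{-j(\operatorname{Re}\mu+\alpha_0+1/2)}e^{-\rho T}$ follows from (ii) together with $|c^\mu(\lambda)|\sim|\lambda|^{-\alpha_0-\operatorname{Re}\mu-1/2}$, a consequence of Lemma~\ref{lemma:estimate for lambda c^mu(lambda)}. I would then show that truncation by $\psi_T$ perturbs the multiplier only by a smoothing error. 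In practice I would compute the spherical transform of $\psi_T\mathcal F^{-1}(\eta_jm^\mu_t)$ directly. I would insert the expansions of $\varphi_\lambda$ and $\varphi_{\lambda'}$, integrate in $r$ against $\psi_T(r)e^{-2\rho r}\Delta(r)$, which is a smooth bump since $\Delta(r)e^{-2\rho r}$ is bounded with bounded derivatives for $r\ge1$, and obtain a kernel in $(\lambda,\lambda')$ concentrated near $\lambda'=\pm\lambda$ with rapid decay. Schur's test then gives the bound $\sup_{\lambda'}|\widehat{A}(\lambda')|\lesssim(1+T)e^{-\rho T}2^{-j(\operatorname{Re}\mu+\alpha_0+1/2)}$, and the same bound with an extra $2^j$ for $\partial_t$. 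The factor $(1+T)$ would come from the $\varphi_0$ bound in Lemma~\ref{lemma:estimate for m_t^mu(lambda)}(i), and the $e^{-\rho T}$ from the prefactor $e^{-\rho t}$.
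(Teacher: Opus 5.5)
Your skeleton matches the paper's: the Sobolev inequality in $t$ (your form, with the $|F(T)|^2$ term, is the correct one), two fixed-$t$ bounds with $2^{-(\operatorname{Re}\mu+\alpha_0\pm\frac12)j}$, and their geometric mean. You differ in how the cutoff $\psi_T$ is removed for $j\ge1$.

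The paper writes $A^\mu_{t,j}=\mathcal F^{-1}(\eta_jm^\mu_t)-B^\mu_{t,j}$ with $B^\mu_{t,j}=(1-\psi_T)\mathcal F^{-1}(\eta_jm^\mu_t)$. It controls the first term by $\|\eta_jm^\mu_t\|_\infty$. It bounds $B^\mu_{t,j}(r)$ pointwise by $2^{-(\operatorname{Re}\mu+N)j}e^{-\rho(r+t)}(1+|t-r|)^{-N}$, integrating by parts in $\lambda$; the phase is non-stationary because $|t-r|\ge\frac12$ on $\operatorname{supp}(1-\psi_T)$. The crude Herz bound $\int|B^\mu_{t,j}|\varphi_0\Delta$ is then harmless. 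That is exactly your ``smoothing error'' sentence.

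Your ``in practice'' computation is a genuinely different route, and it works. You write $\widehat{A^\mu_{t,j}}(\lambda')=\int\eta_j(\lambda)m^\mu_t(\lambda)|c(\lambda)|^{-2}K(\lambda',\lambda)\,d\lambda$ with $K(\lambda',\lambda)=\int\psi_T\varphi_\lambda\varphi_{\lambda'}\Delta\,dr$. Integration by parts in $r$ gives $|K|\lesssim|c(\lambda)c(\lambda')|\sum_\pm(1+|\lambda\pm\lambda'|)^{-M}$. Nothing is lost because the weight $|c(\lambda)|^{-2}|c(\lambda)c(\lambda')|=|c(\lambda')/c(\lambda)|$ grows at most polynomially in $||\lambda|-|\lambda'||$ for $|\lambda'|\ge1$. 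A direct sup bound (no Schur test is needed) then gives $2^{-(\operatorname{Re}\mu+\alpha_0+\frac12)j}e^{-\rho t}$, and the same with an extra $2^j$ using $\partial_tm^\mu_t$.

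The gain is that you use the Harish-Chandra expansion only on $[T-1,T+2]$. The paper's $B^\mu_{t,j}$ also lives on $[0,T-\frac12]$, which includes a neighbourhood of $r=0$ where the expansion with bounded $a_2$ is not available. The costs are two:
\begin{enumerate}
\item You need all $r$-derivatives of $a_2(\lambda,r)$. The paper's expansion lemma records only $l\le1$; the Harish-Chandra series supplies the rest.
\item You need a separate treatment of $|\lambda'|\le1$, where $c(\pm\lambda')$ has a pole. There, do not expand $\varphi_{\lambda'}$; integrate by parts against $e^{\pm i\lambda r}$ using $|\partial_r^l\varphi_{\lambda'}(r)|\lesssim(1+r)e^{-\rho r}$.
\end{enumerate}

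The step that does not deliver the stated bound is $j=0$. You combine $|\mathcal F^{-1}(\eta_0m^\mu_t)(r)|\lesssim(1+T)e^{-\rho T}\varphi_0(r)$ with $\int_{T-1}^{T+2}\varphi_0(r)^2\Delta(r)\,dr\asymp(1+T)^2$. Your Herz estimate then gives $(1+T)^3e^{-\rho T}$, not $(1+T)e^{-\rho T}$. This is harmless for Lemma \ref{l2-est}, but it is not the lemma as stated.

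To repair it, sharpen the kernel bound. For $0<\lambda\le2$, the simple poles of $c$ and $c^\mu$ at $0$ give $|\varphi_\lambda(r)|\lesssim e^{-\rho r}\min(1+r,\lambda^{-1})$ and $|m^\mu_t(\lambda)|\lesssim e^{-\rho t}\min(1+t,\lambda^{-1})$, and $|c(\lambda)|^{-2}\lesssim\lambda^2$. Hence $|\mathcal F^{-1}(\eta_0m^\mu_t)(r)|\lesssim e^{-\rho(t+r)}$, and Herz then gives $(1+T)e^{-\rho T}$. Alternatively, use the paper's splitting at $j=0$ as well.

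Finally, ``take $t_0$ large'' is not an option. The lemma is stated for arbitrary $t_0>0$, and Lemma \ref{l2-est} uses the small $t_0$ of Lemma \ref{local-reduction}. For $T$ close to $t_0$, $\operatorname{supp}\psi_T$ reaches $r$ near $0$, so your route still needs a local argument there. The paper's proof passes over the same region.
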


\begin{proof}
We will prove the following two estimates
\be \label{lemma:estimate_1}
    \| f * A_{t,j}^\mu \|_2 \le C \, 2^{-(\operatorname{Re }\mu + \alpha_0 + \frac{1}{2})\,j} \, e^{-\rho T} \, (1+T) \, \|f\|_2
\ee
and
\be \label{lemma:estimate_2}
    \| \partial_t (f * A_{t,j}^\mu) \|_2 \le C \, 2^{-(\text{Re}\,\mu + \alpha_0 - \frac{1}{2})\,j} \, e^{-\rho T} \, (1+T) \, \|f\|_2,
\ee
for any $t\in [T,T+1]$. Let us first show (\ref{lemma:estimate_1}). To do this, let 
\bes
    B_{t,j}^\mu(r) = (1 - \psi_T(r)) \, \mathcal{F}^{-1} (\eta_j m_t^\mu)(r) 
\ees
be the complementary kernel of $A_{t,j}^\mu$, such that $A_{t,j}^\mu + B_{t,j}^\mu = \mathcal{F}^{-1} (\eta_j m_t^\mu)$. Then, using Plancherel's theorem, we have
\bes
    \|f * (A_{t,j}^\mu + B_{t,j}^\mu)\|_2 = \|\mathcal{F}(f) \, \mathcal{F}(A_{t,j}^\mu+ B_{t,j}^\mu) \|_2 \leq \|\eta_j m_t^\mu\|_\infty \, \|f\|_2
\ees
Using Lemma \ref{lemma:estimate for lambda^-1 c(lambda)^-1}, Lemma \ref{lemma:estimate for lambda c^mu(lambda)} and Lemma \ref{lemma:estimate for m_t^mu(lambda)}(ii), we get for $\lambda \in $ supp($\eta_j$) and for $j>0$,
\begin{align*}
    \left|m_t^\mu(\lambda) \right| &\leq e^{-\rho t}\, \frac{1}{|\lambda|} \, \left(  \left|\lambda\, c^\mu(\lambda) \right|\, \left|a_2^\mu(\lambda,t) \right| + \left|\lambda\, c^\mu(-\lambda) \right|\, \left|a_2^\mu(-\lambda,t)\right| \right)\\
    &\leq C \, 2^{-(\text{Re}\,\mu + \alpha_0 + \frac{1}{2})\,j} \, e^{-\rho t}.
\end{align*}
    
Hence for $j>0$,
\bes
     \|f * (A_{t,j}^\mu + B_{t,j}^\mu)\|_2 \leq C \, 2^{-(\text{Re}\,\mu + \alpha_0 + \frac{1}{2})\,j} \, e^{-\rho T} \, \|f\|_2.
\ees
For $j=0$, we will use Lemma \ref{lemma:estimate for m_t^mu(lambda)}(i) to obtain 
\bes
     \|f * (A_{t,0}^\mu + B_{t,0}^\mu)\|_2 \leq C \, e^{-\rho T} \,(1+T) \|f\|_2.
\ees
Thus, for any $t\in[T,T+1]$ and $j\ge 0$, 
\be\label{eqn: f * (A_t,j^mu + B_t,j^mu)}
    \|f * (A_{t,j}^\mu + B_{t,j}^\mu)\|_2 \leq C \, 2^{-(\text{Re}\,\mu + \alpha_0 + \frac{1}{2})\,j} \, e^{-\rho T} \, (1+T) \, \|f\|_2.
\ee
On the other hand, we have
\bes
    B_{t,j}^\mu(r) = (1-\psi_T(r)) \int_0^\infty \eta_j(\lambda) \, m_t^\mu(\lambda) \, \phi_\lambda^{(\alpha_0, \beta_0)}(r) \, |c(\lambda)|^{-2} \, d\lambda.
\ees
Now we apply Lemma \ref{lemma:estimate for lambda^-1 c(lambda)^-1} and Lemma \ref{lemma:estimate for m_t^mu(lambda)}(ii), to write for $j\geq 0$
\begin{align*}
    B_{t,j}^\mu(r)& = (1-\psi_T(r)) \, e^{-\rho (r+t)} \\
    &\quad\quad \int_\R \eta_j(\lambda) \, c^\mu(\lambda) \, a_2^\mu(\lambda,t)
    \left[ \dfrac{e^{i\lambda (t+r)}}{c(-\lambda)} \, a_2(\lambda,r) + \dfrac{e^{i\lambda (t-r)}}{c(\lambda)} \, a_2(-\lambda,r) \right]\, d\lambda \\
    &= (1-\psi_T(r)) \, e^{-\rho (r+t)}[I_1(\lambda)+I_2(\lambda)],
\end{align*}
where,
\be\label{eqn: I_1(lambda)}
    I_1(\lambda ) = \int_\R \eta_j(\lambda) \, c^\mu(\lambda) \, a_2^\mu(\lambda,t) \, a_2(-\lambda,r) \,  \dfrac{e^{i\lambda (t-r)}}{c(\lambda)} \, d\lambda,
\ee
and
\be\label{eqn: I_2(lambda)}
    I_2(\lambda ) = \int_\R \eta_j(\lambda) \, c^\mu(\lambda) \, a_2^\mu(\lambda,t) \, a_2(\lambda,r) \, \dfrac{e^{i\lambda (t+r)}}{c(-\lambda)} \, d\lambda.
\ee
Using integration by parts we obtain,
\bes
    I_1(\lambda) = (-1)^{N+1} \int_\R \dfrac{d^{N+1}}{d\lambda^{N+1}} \left( \eta_j(\lambda) \, \frac{c^\mu(\lambda)}{c(\lambda)} \, a_2^\mu(\lambda,t) \, a_2(-\lambda,r) \right) \dfrac{e^{i\lambda (t-r)}}{i^{N+1} (t-r)^{N+1}} \, d\lambda.
\ees
Applying Lemma \ref{lemma:estimate for c^mu(lambda)/c(lambda)} and  Lemma \ref{lemma:estimate for m_t^mu(lambda)} we get for $\lambda \in $ supp($\eta_j$) and for $j\geq 0$,
\begin{align*}
    |I_1(\lambda)| &\leq \int_{\text{supp}(\eta_j)} \frac{2^{-(\operatorname{Re }\mu + N +1)\,j}}{|t-r|^{N+1}}\, d\lambda \\
    &\le C \, 2^{-(\operatorname{Re }\mu + N)\,j} \, |t-r|^{-N-1}.
\end{align*}
If $0 < |t-r| \le 1$, then there exist $k\in \mathbb{N}$ such that $0< 1/k < |t-r|$. Thus in this case we have $|t-r|^{-N-1} < C \, (1+ |t-r|)^{-N}$ and if $|t-r| > 1$, we have $|t-r|^{-N-1} < C \, (1+ |t-r|)^{-N}$. Thus,
\bes
    |I_1(\lambda)| \le C \, 2^{-(\operatorname{Re }\mu + N)\,j} \, (1+|t-r|)^{-N}.
\ees
Similarly we can show that 
\bes
    |I_2(\lambda)| \le C \, 2^{-(\operatorname{Re }\mu + N)\,j} \, (1+|t-r|)^{-N}.
\ees
Hence, we have for $j\geq 0$
\be\label{eqn: estimate for B_{t,j}^mu}
    | B_{t,j}^\mu(r) | \le C \, 2^{-(\operatorname{Re }\mu + N)\,j} \, e^{-\rho(r+t)} \, (1+|t-r|)^{-N}.
\ee
Therefore by Plancherel's theorem,
\bes
    \|f * B_{t,j}^\mu\|_2 = \|\mathcal{F}(f)\, \mathcal{F}(B_{t,j}^\mu)\|_2 \le \|\mathcal{F}(B_{t,j}^\mu)\|_\infty \, \|f\|_2
\ees
Now using Lemma \ref{lemma:estimate for m_t^mu(lambda)}(i) and the estimate (\ref{eqn: estimate for B_{t,j}^mu}) for $B_{t,j}^\mu$  we have,
\begin{align*}
    |\mathcal{F}(B_{t,j}^\mu)(\lambda)| & \leq C \int_0^\infty |B_{t,j}^\mu(r)| \, |\phi_\lambda^{(\alpha_0,\beta_0)}(r)| \, \Delta_{\alpha_0,\beta_0}(r) \, dr  \\ 
    &\leq C \int_0^\infty |B_{t,j}^\mu(r)|\, (1+r) \,e^{-\rho r}\, (\sinh r)^{2\alpha_0+1} \, (\cosh r)^{2\beta_0 +1} \, dr \\
    &\leq C \, 2^{-(\operatorname{Re }\mu + N)\, j} \, e^{-\rho t } \int_{T+3/2}^\infty (1+r) \, (1+|t-r|)^{-N} \, dr
\end{align*}
It can be shown that for $N\geq 3$, 
\bes
    \int_{T+3/2}^\infty (1+r) \, (1+|t-r|)^{-N} \, dr \le C \, (1+t)
\ees
Thus, we obtain
\be\label{eqn: f * B_t,j^mu}
     \|f * B_{t,j}^\mu\|_2  \le C \, 2^{-(\operatorname{Re }\mu + N)\, j} \, e^{-\rho T } \, (1+T) \, \|f\|_2.
\ee
Hence from (\ref{eqn: f * (A_t,j^mu + B_t,j^mu)}) and (\ref{eqn: f * B_t,j^mu}), 
\begin{align*}
    \|f* A_{t,j}^\mu\|_2 
    &\le \|f * (A_{t,j}^\mu + B_{t,j}^\mu)\|_2  + \|f * B_{t,j}^\mu\|_2 \\ 
    &\le C \left( 2^{-(\text{Re}\,\mu + \alpha_0 + \frac{1}{2})\,j} + 2^{-(\operatorname{Re }\mu + N)\, j}   \right) 
    e^{-\rho T} \, (1+T) \, \|f\|_2\\
    &\leq C \, 2^{-(\operatorname{Re }\mu + \alpha_0 + \frac{1}{2})\,j} \, e^{-\rho T} \, (1+T) \, \|f\|_2 \quad \text{for } N > \alpha_0+\frac{1}{2}.
\end{align*}
Now we need to show the estimates (\ref{lemma:estimate_2}). The proof is similar to the above; the only difference being that differentiation with respect to $t$ may bring down an extra factor of $\lambda \asymp 2^j$. First observe that 
\begin{align*}
    \mathcal{F} \left( \frac{\partial}{\partial t} \left( A_{t,j}^\mu + B_{t,j}^\mu \right) \right)(\lambda) 
    &= \int_{X} \frac{\partial}{\partial t} \left( A_{t,j}^\mu + B_{t,j}^\mu \right)(x)\, \phi_\lambda^{(\alpha_0, \beta_0)}(x) \, dx \\ 
    &= \frac{\partial}{\partial t} \int_{X} \left( A_{t,j}^\mu + B_{t,j}^\mu \right)(x)\, \phi_\lambda^{(\alpha_0, \beta_0)}(x) \, dx \\
    &= \partial_t \left( \mathcal{F} \left( A_{t,j}^\mu + B_{t,j}^\mu \right)(\lambda) \right).
\end{align*}
By Plancherel's theorem,
\begin{align*}
    \left\| \partial_t \left( f * \left( A_{t,j}^\mu + B_{t,j}^\mu \right) \right) \right\|_2 
    &= \left\| \mathcal{F}(f) \, \mathcal{F} \left( \partial_t \left( A_{t,j}^\mu + B_{t,j}^\mu \right) \right) \right\|_2 \\
    &= \left\| \mathcal{F}(f) \, \partial_t \mathcal{F} \left( A_{t,j}^\mu + B_{t,j}^\mu \right) \right\|_2 \\
    &= \left\| \mathcal{F}(f) \, \partial_t \left( \eta_j\,m_t^\mu \right) \right\|_2 \\
    &\le  \| \partial_t \left(\eta_j\,m_t^\mu \right) \|_\infty \, \|f\|_2
\end{align*}
From Lemma \ref{lemma:estimate for m_t^mu(lambda)}(ii), we have,
\begin{align*}
    \partial_t(m_t^\mu(\lambda)) &= - \rho \, m_t^\mu(\lambda) + e^{-\rho t} \left(  i\, \lambda\, e^{i\lambda t}\, c^\mu(\lambda) \, a_2^\mu(\lambda,t) 
    - i\, \lambda\, e^{-i\lambda t} \, c^\mu(-\lambda) \, a_2^\mu(-\lambda,t)\right) \\
    &\quad\quad\quad + e^{-\rho t} \left( e^{i\lambda t} \, c^\mu(\lambda) \, \partial_t(a_2^\mu(\lambda,t)) 
    + e^{-i\lambda t} \,c^\mu(-\lambda) \, \partial_t(a_2^\mu(-\lambda,t)) \right).
\end{align*}
Using Lemma \ref{lemma:estimate for lambda^-1 c(lambda)^-1}, Lemma \ref{lemma:estimate for lambda c^mu(lambda)} and Lemma \ref{lemma:estimate for m_t^mu(lambda)}(ii), we get for $\lambda \in $  supp($\eta_j$) and for $j>0$,  
\begin{align*}
    \left|\partial_t(m_t^\mu(\lambda)) \right| &\leq  \rho \, \left|m_t^\mu(\lambda)\right| + e^{-\rho t} \left( \left| \lambda\,c^\mu(\lambda) \right|\, \left| a_2^\mu(\lambda,t)\right| 
    + \left| \lambda \, c^\mu(-\lambda)\right| \, \left| a_2^\mu(-\lambda,t)\right| \right) \\
    &\quad\quad\quad + e^{-\rho t} \, \frac{1}{|\lambda|} \left( \left|\lambda\, c^\mu(\lambda)\right| \, \left| \partial_t(a_2^\mu(\lambda,t))\right| 
    + \left| \lambda\, c^\mu(-\lambda) \right| \, \left| \partial_t(a_2^\mu(-\lambda,t))\right| \right) \\
    &\leq C \, e^{-\rho t} \left(  2^{-(\text{Re}\,\mu + \alpha_0 + \frac{1}{2})\,j} + 2^{-(\text{Re}\,\mu + \alpha_0 - \frac{1}{2})\,j} + 2^{-(\text{Re}\,\mu + \alpha_0 + \frac{1}{2})\,j} \right) \\
    & \le C \, 2^{-(\text{Re}\,\mu + \alpha_0 - \frac{1}{2})\,j} \, e^{-\rho t}.
\end{align*}
Hence for $j>0$,
\bes
    \left\| \partial_t \left( f * \left( A_{t,j}^\mu + B_{t,j}^\mu \right) \right) \right\|_2 \le C \, 2^{-(\text{Re}\,\mu + \alpha_0 - \frac{1}{2})\,j} \, e^{-\rho T} \, \|f\|_2.
\ees
For $j=0$, we will use Lemma \ref{lemma:estimate for m_t^mu(lambda)}(i) to obtain 
\bes
      \left\| \partial_t \left( f * \left( A_{t,0}^\mu + B_{t,0}^\mu \right) \right) \right\|_2 \leq C \, e^{-\rho T} \,(1+T) \|f\|_2.
\ees
Thus, for any $t\in[T,T+1]$ and $j\ge 0$, 
\bes
    \left\| \partial_t \left( f * \left( A_{t,j}^\mu + B_{t,j}^\mu \right) \right) \right\|_2 \leq C \, 2^{-(\text{Re}\,\mu + \alpha_0 - \frac{1}{2})\,j} \, e^{-\rho T} \, (1+T) \, \|f\|_2.
\ees
On the other hand, we have for $j\geq 0$,
\begin{align*}
    B_{t,j}^\mu(r)& = (1-\psi_T(r)) \, e^{-\rho (r+t)} \\
    &\quad\quad \int_\R \eta_j(\lambda) \, c^\mu(\lambda) \, a_2^\mu(\lambda,t)
    \left[ \dfrac{e^{i\lambda (t+r)}}{c(-\lambda)} \, a_2(\lambda,r) + \dfrac{e^{i\lambda (t-r)}}{c(\lambda)} \, a_2(-\lambda,r) \right]\, d\lambda.
\end{align*}
Therefore,
\bes 
    \partial_t(B_{t,j}^\mu(r)) = -\rho\,B_{t,j}^\mu(r) +  J(\lambda)+ i\lambda\, B_{t,j}^\mu(r),
\ees
where,
\begin{align*}
    J(\lambda ) &= (1-\psi_T(r)) \, e^{-\rho (r+t)} \\
    &\quad\quad\int_\R \eta_j(\lambda) \, c^\mu(\lambda) \, \partial_t(a_2^\mu(\lambda,t)) \left[ \dfrac{e^{i\lambda (t+r)}}{c(-\lambda)}\, a_2(\lambda,r) + \dfrac{e^{i\lambda (t-r)}}{c(\lambda)}\, a_2(-\lambda,r) \right]\, d\lambda
\end{align*}
We have already shown the estimate (\ref{eqn: estimate for B_{t,j}^mu}) of $B_{t,j}^\mu$. Now using integration by parts and applying Lemma \ref{lemma:estimate for c^mu(lambda)/c(lambda)} and Lemma \ref{lemma:estimate for m_t^mu(lambda)} we get, 
\bes
    |J(\lambda)| \le C \, 2^{-(\operatorname{Re }\mu + N)\,j} \, e^{-\rho(r+t)} \, (1+|t-r|)^{-N}.
\ees
Finally we obtain,
\be\label{eqn: estimate for partial_t(B_{t,j}^mu)}
    | \partial_t(B_{t,j}^\mu(r)) | \le C \, 2^{-(\operatorname{Re }\mu + N-1)\,j} \, e^{-\rho(r+t)} \, (1+|t-r|)^{-N}.
\ee
Therefore, by Plancherel's theorem,
\bes
    \|\partial_t(f * B_{t,j}^\mu)\|_2 = \|\mathcal{F}(f)\, \mathcal{F}(\partial_t(B_{t,j}^\mu))\|_2 \le \|\mathcal{F}(\partial_t(B_{t,j}^\mu))\|_\infty \, \|f\|_2
\ees
Now using Lemma \ref{lemma:estimate for m_t^mu(lambda)}(i) and the estimate (\ref{eqn: estimate for partial_t(B_{t,j}^mu)}) for $\partial_t(B_{t,j}^\mu)$  we have for $j\geq 0$,
\begin{align*}
    |\mathcal{F}(\partial_t(f * B_{t,j}^\mu)(\lambda)| & \leq C \int_0^\infty |\partial_t(f * B_{t,j}^\mu)| \, |\phi_\lambda^{(\alpha_0,\beta_0)}(r)| \, \Delta_{\alpha_0,\beta_0}(r) \, dr  \\ 
    &\leq C \int_0^\infty |\partial_t(f * B_{t,j}^\mu)|\, (1+r) \,e^{-\rho r}\, (\sinh r)^{2\alpha_0+1} \, (\cosh r)^{2\beta_0 +1} \, dr \\
    &\leq C \, 2^{-(\operatorname{Re }\mu + N -1)\, j} \, e^{-\rho t } \int_{T+3/2}^\infty (1+r) \, (1+|t-r|)^{-N} \, dr\\
    &\leq C \, 2^{-(\operatorname{Re }\mu + N-1)\, j} \, e^{-\rho t } (1+t)
\end{align*}
Thus, we obtain
\bes
     \|\partial_t(f * B_{t,j}^\mu)\|_2  \le C \, 2^{-(\operatorname{Re }\mu + N-1)\, j} \, e^{-\rho T } \, (1+T) \, \|f\|_2.
\ees
Hence for $j\geq 0$,
\begin{align*}
    \left\| \partial_t (f * A_{t,j}^\mu) \right\|_2 
    &\le \left\| \partial_t \left( f * (A_{t,j}^\mu + B_{t,j}^\mu) \right) \right\|_2 + \left\| \partial_t (f * B_{t,j}^\mu) \right\|_2 \\
    &\le C \, \left( 2^{-(\text{Re}\,\mu + \alpha_0 - \frac{1}{2})\,j} +  2^{-(\text{Re}\,\mu + N - 1)\,j} \right) e^{-\rho T}\, (1+T)\, \|f\|_2\\
    &\le C\, 2^{-(\text{Re}\,\mu + \alpha_0 - \frac{1}{2})\,j}\,  e^{-\rho T} \,(1+T) \, \|f\|_2
    \quad \text{for }  N>\alpha_0 + \frac{1}{2}.
\end{align*}

Now fix any $x\in G/K$ and let $F(t) = (f*A_{t,j}^\mu)(x)$ for $t\in [T,T+1]$. By [\cite{sogge2017}, Lemma 2.4.2],
\bes
    \sup_{t \in [T, T+1]} |f * A_{t,j}^\mu (x)|^2 \le C \left( \int_T^{T+1} |f * A_{t,j}^\mu (x)|^2 \, dt \right)^{1/2} \left( \int_T^{T+1} |\partial_t (f * A_{t,j}^\mu(x))|^2 \, dt \right)^{1/2}.
\ees
Therefore,
\begin{align*}
    \left\| \sup_{t \in [T, T+1]} |f * A_{t,j}^\mu| \right\|_2^2 &\le \int_{G/K} \, \sup_{t \in [T, T+1]} |f * A_{t,j}^\mu (x)|^2 \, dx \\
    &\le \int_{G/K} \left( \int_T^{T+1} |f * A_{t,j}^\mu (x)|^2 \, dt \right)^{1/2} \left( \int_T^{T+1} |\partial_t (f * A_{t,j}^\mu(x))|^2 \, dt \right)^{1/2} dx\\
    &\le C \left( \int_{G/K} \int_T^{T+1} |f * A_{t,j}^\mu (x)|^2 \, dt \, dx \right)^{1/2} \\
    &\hspace{1.5in}\left( \int_{G/K} \int_T^{T+1} |\partial_t (f * A_{t,j}^\mu(x))|^2 \, dt \, dx \right)^{1/2}\\
    &= C \left( \int_T^{T+1} \|f * A_{t,j}^\mu\|_2^2 \, dt \right)^{1/2} \left( \int_T^{T+1} \|\partial_t (f * A_{t,j}^\mu)\|_2^2 \, dt \right)^{1/2}    
\end{align*}
Hence,
\bes
    \left\| \, \sup_{t \in [T, T+1]} |f * A_{t,j}^\mu| \, \right\|_2 \le C \, 2^{-(\operatorname{Re } \mu + \alpha_0)\,j} \, e^{-\rho T} (1+T) \, \|f\|_2.
\ees
\end{proof}

We now prove the following local estimate.
\begin{lemma}
\label{local-reduction}
Let $X=G/K$ be a rank-one Riemannian symmetric space of noncompact
type and let $d=\dim X$. There exists $t_0>0$ such that, for
$0<t<t_0$, 
\[
 \left\|
     \sup_{0<t<t_0}|\mathcal M_t^\mu f|
 \right\|_{L^2(X)}
 \lesssim_{\mu} \|f\|_{L^2(X)},
\]
for
\[
        \operatorname{Re}\,\mu >1-\frac d2.
\]

\end{lemma}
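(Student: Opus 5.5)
The plan is to reduce the local estimate to a Euclidean-type statement for small radii and then invoke the known local results: Stein's argument, or Sogge's local maximal theorem for hypersurfaces. Because $\mathcal K$ is supported in the ball of radius $t<t_0$, the operator $\sup_{0<t<t_0}|M_t^\mu f|$ only sees $f$ on a $t_0$-neighbourhood of $x$. So we cover $X$ by geodesic balls $B(x_i,t_0)$ with bounded overlap; this is possible for $t_0$ small by homogeneity. It then suffices to prove
\[
\Bigl\|\sup_{0<t<t_0}|M_t^\mu (f\mathbbm 1_{B(x_i,2t_0)})|\Bigr\|_{L^2(B(x_i,t_0))}\lesssim \|f\|_{L^2(B(x_i,2t_0))}
\]
with a constant independent of $i$. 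By $G$-invariance of $M_t^\mu$ we may take $x_i=o$. In normal (exponential) coordinates around $o$, the metric is smooth and uniformly comparable to the Euclidean one. The kernel is $\Gamma(\mu)^{-1}C(t,\mu)(\cosh 2t-\cosh 2d(x,y))_+^{\mu-1}$, and the key quantities behave as follows:
\begin{itemize}
\item $C(t,\mu)\asymp t^{-2\alpha_0-2\mu}=t^{-(d-1)-2\mu+1}$ for small $t$;
\item $\cosh 2t-\cosh 2d = 2\sinh 2\xi\,(t-d)\cdot(\ldots)\asymp t\,(t-d(x,y))$ for $d(x,y)<t<t_0$, with a smooth nonvanishing factor.
\end{itemize}
Hence $M_t^\mu$ is, up to smooth factors, a Riesz-type fractional average of order $\mu$ over the geodesic balls. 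In other words it is the analytic family of order-$\mu$ fractional integrals in $t$ of the geodesic spherical means $f*d\sigma_s$, weighted by smooth densities.

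For the estimate itself, I would use analytic interpolation in $\mu$, as Stein did. I would not attempt a direct Fourier-integral-operator analysis. Instead, define the operators on the strip with a decomposition: write the kernel as $\sum_j$ of Littlewood--Paley pieces in the variable $t-d(x,y)$ at scale $2^{-j}$ (equivalently, the frequency decomposition $\eta_j$ already used, now at small $t$). For each piece, we need two inputs:
\begin{itemize}
\item the fixed-time $L^2$ bound for averages over the geodesic spheres $\{y:d(x,y)=s\}$, which decays as $2^{-j(d-1)/2}$ by the nonvanishing curvature of small geodesic spheres (Sogge's local smoothing/averaging estimate, or stationary phase for the Fourier integral operator $f\mapsto f*d\sigma_s$ locally);
\item the fractional order $\mu$ contributing $2^{-j\operatorname{Re}\mu}$, and $\partial_t$ costing $2^{j}$.
\end{itemize}
Applying the Sobolev lemma in $t$ (\cite[Lemma 2.4.2]{sogge2017}), exactly as at the end of Lemma~\ref{lemma: sup_t[T,T+1] |f*A_t,j^mu|}, gives
\[
\Bigl\|\sup_{t}|f*A_{t,j}|\Bigr\|_2\lesssim 2^{-j(\operatorname{Re}\mu+\frac{d-1}{2}-\frac12)}\|f\|_2 .
\]
This estimate is first established on a dyadic range $t\in[2^{-k},2^{-k+1}]$ and then extended to $0<t<t_0$ by rescaling the $t$-variable, with a square-function argument in $k$. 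The sum over $j$ converges iff $\operatorname{Re}\mu>1-\frac d2$. The low-frequency part $j=0$ is dominated by the Hardy--Littlewood maximal operator, which is locally bounded since $X$ is locally doubling.

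The main obstacle is the uniformity in small $t$: the Sobolev-in-$t$ argument costs $t^{-1}$ unless we rescale. I would first try to handle this via Sogge's theorem on local maximal functions for the family of hypersurfaces $\{y:d(x,y)=t\}$. That theorem already gives $L^2$ boundedness of $\sup_{0<t<t_0}$ for fractional averages of order $\mu$ with $\operatorname{Re}\mu>1-d/2$, under the rotational curvature condition, and the condition holds here because locally the geodesic spheres are small perturbations of Euclidean spheres. In doing so, one must check that the smooth weights coming from $C(t,\mu)$ and the factorization of $\cosh 2t-\cosh 2d$ are symbols of order zero uniformly in $t$.
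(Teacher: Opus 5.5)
Your proposal is correct and, in the form you finally settle on, is the paper's argument: localize with a bounded-overlap cover using homogeneity, then use $C(t,\mu)\asymp t^{-2\alpha_0-2\mu}$ and the factorization of $\cosh 2t-\cosh 2d(x,y)$ to write the kernel as a smooth nonvanishing weight times $(t-d(x,y))_+^{\mu-1}/\Gamma(\mu)$ (the paper makes this explicit by rescaling $y=\exp_x(tz)$ to the unit sphere), and finally invoke Sogge's local maximal theorem for curved hypersurface families, which gives $L^2$ boundedness for $\operatorname{Re}\mu>1-\frac d2$. Your preliminary Littlewood--Paley/Sobolev-in-$t$ sketch mirrors the paper's large-$t$ lemma, with dyadic rescaling and a square function in $k$ added, and its exponent count is right, but the paper does not use it for small radii and you do not need it once Sogge's theorem is applied.
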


\begin{proof}
We fix $x_0\in X$ and choose a
normal coordinate neighborhood $U$ of $x_0$. By taking $t_0>0$
sufficiently small, we may assume that $B(x,t)$ remains in a fixed
coordinate neighborhood whenever $x\in U$ and $0<t<t_0$.

For $\operatorname{Re}\mu>0$, the kernel defining $\mathcal M_t^\mu$ contains the
factor
\[
 \frac{1}{\Gamma(\mu)}
 \bigl(\cosh(2t)-\cosh(2d(x,y))\bigr)_+^{\mu-1}.
\]
Writing $r=d(x,y)$ and using
\[
 \cosh(2t)-\cosh(2r)
   =2\sinh(t+r)\sinh(t-r),
\]
we obtain
\[
 \cosh(2t)-\cosh(2r)
   =2(t+r)(t-r)b(t,r),
\]
where
\[
 b(t,r)
 =
 \frac{\sinh(t+r)}{t+r}
 \frac{\sinh(t-r)}{t-r}
\]
is smooth and strictly positive for $t,r$ sufficiently small.
Hence, modulo a smooth nonvanishing factor, the singular part of the
kernel is
\[
       \frac{(t-d(x,y))_+^{\mu-1}}{\Gamma(\mu)}.
\]

We next use geodesic normal coordinates centered at $x$ and write
\[
        y=\exp_x(tz).
\]
Then
\[
        d(x,y)=t|z|,
        \qquad
        dy=t^d J(x,t,z)\,dz,
\]
where $J$ is smooth and nonvanishing for small $t$. After incorporating
the normalization occurring in the definition of $\mathcal M_t^\mu$,
as well as the Jacobian and the preceding smooth factors, a localized
piece of $\mathcal M_t^\mu$ takes the form
\be\label{lem:local-reduction}
 T_t^\mu f(x)
 =
 \int
 f\bigl(\exp_x(tz)\bigr)
 A_\mu(t,x,z)
 \frac{(1-|z|)_+^{\mu-1}}{\Gamma(\mu)}
 \,dz,
 \ee
where $A_\mu$ is smooth in $(t,x,z)$ and depends analytically on
$\mu$.

The singular hypersurface in \eqref{lem:local-reduction} is
\[
                    |z|=1.
\]
Cover $S^{d-1}$ by finitely many coordinate patches. On each such
patch, after a rotation of coordinates if necessary, the hypersurface
can be represented as
\[
             z_d=h(t,x;z'),
             \qquad z'=(z_1,\ldots,z_{d-1}).
\]
At $t=0$ this is a piece of the Euclidean unit sphere. For example,
on the upper hemisphere,
\[
             h(0,x;z')=(1-|z'|^2)^{1/2}.
\]
Its Hessian is nondegenerate:
\[
 \det\left(
 \frac{\partial^2h(0,x;z')}
      {\partial z'_j\partial z'_k}
 \right)\neq0.
\]
By continuity, after decreasing $t_0$ if necessary, there is a
constant $c>0$ such that
\[
 \left|
 \det\left(
 \frac{\partial^2h(t,x;z')}
      {\partial z'_j\partial z'_k}
 \right)
 \right|\ge c
\]
uniformly on the supports of the localized amplitudes. Thus the
corresponding hypersurfaces have uniformly nonvanishing Gaussian
curvature.

Furthermore, since the defining function $1-|z|$ vanishes simply on
$|z|=1$, on each coordinate patch we may write
\[
  1-|z|
   =
  q(t,x,z)\bigl(h(t,x;z')-z_d\bigr),
\]
where $q$ is smooth and nonvanishing. Choosing the orientation so
that $q>0$, we have
\[
 \frac{(1-|z|)_+^{\mu-1}}{\Gamma(\mu)}
 =
 q(t,x,z)^{\mu-1}
 \frac{(h(t,x;z')-z_d)_+^{\mu-1}}{\Gamma(\mu)}.
\]
The factor $q^{\mu-1}$ is smooth and may therefore be absorbed into
the amplitude. Consequently, every localized piece is of the form
\bes
 T_t^\mu f(x)
 =
 \int
 f\bigl(\Theta(t,x,z)\bigr)
 a_\mu(t,x,z)
 \frac{(h(t,x;z')-z_d)_+^{\mu-1}}{\Gamma(\mu)}
 \,dz,
 \ees
with
\[
 \det h_{z'z'}(t,x;z')\neq0.
\]
This is the local analytic family of generalized hypersurface means
appearing in Sogge's maximal theorem \cite[Theorem~2.2]{Sogge1987}.

By \cite[Theorem~2.2]{Sogge1987}, the corresponding local maximal
operator is bounded on $L^2$ whenever
\[
              \operatorname{Re}\mu>1-\frac d2.
\]
Thus, for every sufficiently small coordinate ball $U$,
\be\label{lem:local-reduction-3}
 \left\|
    \sup_{0<t<t_0}|T_t^\mu f|
 \right\|_{L^2(U)}
 \lesssim_\mu
 \|f\|_{L^2(U^*)},
\ee
where $U^*$ is a fixed enlargement of $U$.

Finally, since $X=G/K$ is homogeneous, the coordinate neighborhoods
and all the constants above can be chosen uniformly over $X$.
Choose a uniformly locally finite covering $\{U_j\}$ of $X$ by such
balls, together with enlarged balls $\{U_j^*\}$ having uniformly
bounded overlap. Summing the squares of
\eqref{lem:local-reduction-3} gives
\[
 \begin{aligned}
 \left\|
   \sup_{0<t<t_0}|\mathcal M_t^\mu f|
 \right\|_{L^2(X)}^2
 &\lesssim
 \sum_j \|f\|_{L^2(U_j^*)}^2  \\
 &\lesssim
 \|f\|_{L^2(X)}^2.
 \end{aligned}
\]
Therefore
\[
 \left\|
   \sup_{0<t<t_0}|\mathcal M_t^\mu f|
 \right\|_{L^2(X)}
 \lesssim_\mu
 \|f\|_{L^2(X)}
\]
provided
\[
       \operatorname{Re}\mu>1-\frac d2.
\]
Since
\[
       \alpha_0=\frac{d-2}{2},
\]
the result follows.
\end{proof}

\begin{lemma}\label{l2-est}
For $\operatorname{Re }\mu > -\alpha_0$ we have
    \bes
        \left\| \, \sup_{t > 0} |M_t^\mu(f)| \, \right\|_2 \le C \, \|f\|_2.
    \ees
\end{lemma}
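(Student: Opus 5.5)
We split the supremum over $t>0$ into a local part $0<t<t_0$ and a global part $t\ge t_0$, where $t_0>0$ is the constant furnished by Lemma \ref{local-reduction}. Since $\alpha_0=\frac{d-2}{2}$, the hypothesis $\operatorname{Re}\mu>-\alpha_0$ is exactly $\operatorname{Re}\mu>1-\frac d2$. Hence Lemma \ref{local-reduction} gives directly
\[
\Bigl\|\sup_{0<t<t_0}|M_t^\mu f|\Bigr\|_2\lesssim_\mu \|f\|_2 .
\]
It therefore remains to control $\sup_{t\ge t_0}|M_t^\mu f|$.

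For the global part we cover $[t_0,\infty)$ by the intervals $[T,T+1]$ with $T=t_0+k$, $k=0,1,2,\dots$, and bound the supremum pointwise by the sum over $k$. The $L^2$ norm of the sum is then at most the sum of the $L^2$ norms of $\sup_{t\in[T,T+1]}|M_t^\mu f|$.

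Fix such an interval. For $t\in[T,T+1]$ we have $M_t^\mu f=f*\mathcal K$, and by construction $\mathcal K=\sum_{j\ge0}A_{t,j}^\mu$ on its support $[0,t]$. Since $\psi_T\equiv1$ on $[T-1/2,T+3/2]\supset[0,t]$ is not literally true when $T$ is large, the proof must use the identity as the paper states it: the kernel equals $\mathcal F^{-1}(m_t^\mu)$, and the cutoff $\psi_T$ only removes the pieces $B_{t,j}^\mu$. Those pieces were shown to be small in $L^2$, and one controls them via the pointwise bound (\ref{eqn: estimate for B_{t,j}^mu}). I would follow the paper's decomposition and write
\[
\sup_{t\in[T,T+1]}|M_t^\mu f|\le \sum_{j\ge0}\sup_{t\in[T,T+1]}|f*A_{t,j}^\mu| .
\]
If needed, I would add the contribution of $\sum_j\sup_t|f*B_{t,j}^\mu|$. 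That contribution is handled by the same Sobolev-in-$t$ argument, using (\ref{eqn: f * B_t,j^mu}) and its $\partial_t$ analogue, which decay like $2^{-(\operatorname{Re}\mu+N-1)j}e^{-\rho T}(1+T)$. Lemma \ref{lemma: sup_t[T,T+1] |f*A_t,j^mu|} then yields
\[
\Bigl\|\sup_{t\in[T,T+1]}|M_t^\mu f|\Bigr\|_2\le C\sum_{j\ge0}2^{-(\operatorname{Re}\mu+\alpha_0)j}\,e^{-\rho T}(1+T)\,\|f\|_2 .
\]
The $j$-sum converges precisely because $\operatorname{Re}\mu+\alpha_0>0$. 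Summing over $T=t_0+k$, the factor $\sum_k e^{-\rho(t_0+k)}(2+t_0+k)$ is finite since $\rho>0$.

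Combining the local and global bounds gives $\|\sup_{t>0}|M_t^\mu f|\|_2\le C\|f\|_2$. The main delicate point is the justification that the sum over $j$ of the cut-off kernels really dominates $\mathcal K$ uniformly in $t\in[T,T+1]$. For a complex $\mu$ there is no positivity, so I would rather write $\mathcal K=\sum_j(A_{t,j}^\mu+B_{t,j}^\mu)$ exactly and treat the $B$-terms as error terms via their rapid decay in $j$ and their exponential decay in $T$. Convergence of the $j$-series in $L^2$ uniformly in $t$ then follows from the same bounds.
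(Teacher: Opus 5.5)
Your skeleton is the paper's: Lemma \ref{local-reduction} for $0<t<t_0$ (via $-\alpha_0=1-\frac d2$), unit intervals $[T,T+1]$, dyadic pieces in $\lambda$, Lemma \ref{lemma: sup_t[T,T+1] |f*A_t,j^mu|}, and summation in $j$ (needing $\operatorname{Re}\mu>-\alpha_0$) and in $T$ (needing $\rho>0$). Your suspicion about the decomposition is correct, and you should be firmer than ``if needed''. Since $\sum_j A^\mu_{t,j}=\psi_T\,\mathcal K$ and $\psi_T=0$ on $[0,T-1]$, the paper's claim that $\psi_T\equiv1$ on $\operatorname{supp}\mathcal K=[0,t]$ fails for $T>1$. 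The inequality $\sup_{t\in[T,T+1]}|M_t^\mu f|\le\sum_j\sup_{t\in[T,T+1]}|f*A^\mu_{t,j}|$, used in the paper's proof and displayed in your second step, fails with it. For example, take $\mu=1$, $T$ large and $f\ge0$ supported in a small ball at distance $T/2$ from $x$. Then every $f*A^\mu_{t,j}(x)$ vanishes, while $M^1_tf(x)=C(t,1)\int_{B(x,t)}f>0$. So the $B$-terms are mandatory, and the exact identity $\mathcal K=\sum_j(A^\mu_{t,j}+B^\mu_{t,j})$ from your last paragraph is the right starting point.

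Having written that identity, you gain nothing by splitting it, and splitting makes you inherit the weakest part of the paper. The kernel bound (\ref{eqn: estimate for B_{t,j}^mu}) comes from the expansion of Lemma \ref{lemma:estimate for m_t^mu(lambda)}(ii) applied to $\phi_\lambda(r)$, which requires $r\ge t_0$. In deriving (\ref{eqn: f * B_t,j^mu}), the $r$-integral is taken only over $[T+3/2,\infty)$. So the inner part $[0,T-1/2]$ of $\operatorname{supp}(1-\psi_T)$ --- exactly the region behind your objection --- is never estimated. Lemma \ref{lemma: sup_t[T,T+1] |f*A_t,j^mu|} is itself proved as $(A+B)-B$, so your route uses these $B$-bounds twice.

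Instead, put $K_{t,j}=\mathcal F^{-1}(\eta_j m^\mu_t)=A^\mu_{t,j}+B^\mu_{t,j}$. The estimate (\ref{eqn: f * (A_t,j^mu + B_t,j^mu)}) and its $\partial_t$ counterpart rest only on Plancherel and on the bounds for $m^\mu_t$ and $\partial_t m^\mu_t$ from Lemmas \ref{lemma:estimate for lambda c^mu(lambda)} and \ref{lemma:estimate for m_t^mu(lambda)}. They bound $\|f*K_{t,j}\|_2$ and $\|\partial_t(f*K_{t,j})\|_2$ by $2^{-(\operatorname{Re}\mu+\alpha_0+\frac12)j}$ and $2^{-(\operatorname{Re}\mu+\alpha_0-\frac12)j}$ respectively, times $e^{-\rho T}(1+T)\|f\|_2$. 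Use the Sobolev inequality in $t$ in its correct form, $\sup_{[T,T+1]}|F|^2\le|F(T)|^2+2\|F\|_{L^2[T,T+1]}\|F'\|_{L^2[T,T+1]}$ (the paper omits the harmless first term). It gives directly
\[
\Bigl\|\sup_{t\in[T,T+1]}|f*K_{t,j}|\Bigr\|_2\lesssim 2^{-(\operatorname{Re}\mu+\alpha_0)j}\,e^{-\rho T}(1+T)\,\|f\|_2 ,
\]
and your summation over $j$ and $T$ then finishes the proof. The cutoff $\psi_T$ is what Ionescu needs for kernel estimates on $L^p$, $p\neq2$. Since this paper reaches $p\neq2$ only through interpolation, $\psi_T$ has no role in this $L^2$ lemma.
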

\begin{proof}
Let $t_0>0$ be fixed, as in Lemma \ref{local-reduction}. Then by Lemma \ref{lemma: sup_t[T,T+1] |f*A_t,j^mu|},
\begin{align*}
    \left\| \sup_{t \ge t_0} |M_t^\mu(f)| \right\|_2 
    &\le \sum_{T=t_0}^\infty \left\| \sup_{t \in [T, T+1]} |M_t^\mu (f)| \right\|_2 \\
    &\le \sum_{T=t_0}^\infty \sum_{j = 0}^\infty \left\| \sup_{t \in [T, T+1]} |f * A_{t,j}^\mu| \right\|_2 \\
    &\le C \,\|f\|_2 \sum_{T=t_0}^\infty e^{-\rho T} \,(1+T) \left( \sum_{j = 0}^\infty 2^{-(\text{Re}\,\mu + \alpha_0)\,j} \right) \\
    &\le C \, \|f\|_2 \sum_{T=t_0}^\infty e^{-\rho T}\, (1+T) \quad \text{as } \operatorname{Re }\mu > -\alpha_0\\
    &\le C \, \|f\|_2 \quad \text{as } \rho>0.
\end{align*}
 Therefore the lemma follows from Lemma  \ref{local-reduction}.  
\end{proof}

Now we are in a position to prove our main theorem, Theorem \ref{main-thm}.    
\begin{proof}[Proof of Theorem \ref{main-thm}]
Let $f^\ast$  be the centered Hardy-Littlewood maximal function on $G/K$ which is defined by \bes f^*(x) = \sup_{t > 0} \frac{1}{|B(x, t)|} \left| \int_{B(x, t)} f(y) \, \mathrm{d}\sigma(y) \right|.\ees 
First, we will show that for $\operatorname{Re}\,\mu \geqslant 1, \, \mathcal M^\mu f(x) \le C f^*(x)$. We observe that
\bes
    |M_t^\mu f(x)|\leq \frac{1}{|\Gamma(\mu)|} \, |C(t, \mu)| \int_{B(x, t)} |f(y)| \left|\cosh 2t - \cosh 2d(x,y)\right|^{\operatorname{Re}\,\mu - 1} d\sigma(y).
    \ees 
Since, $\cosh 2t - \cosh 2d(x,y) \le \cosh 2t - 1 = 2 \sinh^2 t$. So, for $\operatorname{Re } \mu \ge 1$,
\bes
    \left(\cosh 2t - \cosh 2d(x,y)\right)^{\operatorname{Re }\mu - 1} \le 2^{\operatorname{Re }\mu -1} \sinh^{2\operatorname{Re }\mu-2}(t).
\ees
Using the expression,
\bes 
C(t,\mu) = \frac{4 \, e^{2\mu t}}{\sinh^{2\alpha_0 + 2 \mu}(t) \, \cosh^{2 \beta_0 + 2\mu}(t)}
\ees we get
\begin{align*}
    |M_t^\mu f(x)| &\leq \frac{1}{|\Gamma(\mu)|} \,  |C(t,\mu)| \, 2^{\operatorname{Re }\mu -1} \sinh^{2\operatorname{Re }\mu-2}(t) \int_{B(x, t)} |f(y)| \, d\sigma(y)\\ \\
     &\le C \frac{1}{|B(x, t)|} \int_{B(x, t)} |f(y)| \, d\sigma(y)\\ \\
   &\le C  f^*(x).
\end{align*}
So, for $\operatorname{Re}\,\mu \geqslant 1$, 
\bes
\mathcal M^\mu f(x) \le C f^*(x).\ees

We now use that fact that $f \to f^* \text{ is a bounded operator on } L^p(G/K) \text{ for all } 1 < p \leqslant \infty$ (\cite{Stromberg1981}).
Using this it is clear that $\mathcal M^\mu$ is bounded operator on $L^p(G/K)$  for all $ \ 1 < p \le \infty$. 
In particular, $\mathcal M^\mu$ is bounded operator on $L^{1+r}(G/K)$  for $r>0$ and $\operatorname{Re}\mu\geq 1$.
Also, from Lemma \ref{l2-est}, $\mathcal M^{\mu}$ is a bounded operator on $L^2(G/K)$ whenever $\operatorname{Re}(\mu) > -\alpha_0$. 

Now, using complex interpolation, we obtain
$\mathcal M^{\mu}$ is a bounded operator on $L^p(G/K)$ for
$1 < p \leq 2$  and $\operatorname{Re}\,\mu > -\alpha_0 + (\alpha_0 + 1)\left(\frac{2}{p} - 1\right).$ 
We observe that \bes -\alpha_0 + (\alpha_0 + 1)\left(\frac{2}{p} - 1\right)=1-d+ \frac{d}{p}.\ees This concludes that the proof of $(1)$.

Now we will show for $\operatorname{Re }\mu >0$, $\mathcal M^{\mu} \text{ is a bounded operator on  } L^\infty(G/K)$. Clearly,
$$
    |M_t^\mu f(x)| \le \frac{|C(t,\mu)|}{|\Gamma(\mu)|} \, \|f\|_\infty \, I(t)
$$
where 
$$
    I(t)  = \int_0^t  \left( \cosh 2t - \cosh 2s \right)^{\operatorname{Re}\mu - 1} \sinh^{2\alpha_0 +1}(s) \cosh^{2\beta_0+1}(s) \, ds
$$
For small $t$, 
$$I(t) \leq  C \int_0^t  \left(t^2 - s^2 \right)^{\operatorname{Re}\mu - 1} s^{2\alpha_0 +1} \, ds =  C \, t^{2\operatorname{Re}\mu + 2\alpha_0} \,B\left(\operatorname{Re}\mu, \alpha_0+1\right),$$
which is finite for $\operatorname{Re } \mu > 0$. Also, for small $t$, \bes |C(t,\mu)| \leq \frac{1}{t^{2\operatorname{Re}\mu + 2\alpha_0}}.
\ees
Now for large $t$,
$$
    I(t)  \leq C \int_0^t  \left( e^{2t} - e^{2s} \right)^{\operatorname{Re}\mu - 1} e^{2\rho s} \, ds \le  C\, (e^t)^{2\operatorname{Re}\mu+2\rho-2} \,B(\operatorname{Re}\mu,\rho),
$$
which is finite for $\operatorname{Re } \mu > 0$. Also, for large $t$, \bes |C(t,\mu)| \leq \frac{1}{(e^t)^{2\operatorname{Re}\mu + 2\rho - 2}}.\ees
So, finally we get 
$$
    |M_t^\mu f(x)| \le C \|f\|_\infty.
$$
So, for $\operatorname{Re }\mu >0$, $\mathcal M^{\mu} \text{ is a bounded operator on  } L^\infty(G/K)$. Now, interpolate between $p=2$, $\operatorname{Re } \mu > - \alpha_0$ and $p=\infty$, $\operatorname{Re } \mu >0$, we get
$$
\mathcal M^{\mu} \text{ is a bounded operator on  } L^p(G/K) \text{ for }
2 \le p \le \infty  \text{ and } \operatorname{Re}\,\mu > -\frac{2\alpha_0}{p}.
$$
Again we observe that 
\bes
-\frac{2\alpha_0}{p}=\frac{2-d}{p}.
\ees
This completes the proof.
\end{proof}

As a corollary, we recover the boundedness of usual spherical maximal function. We recall  
$$
\mathcal{M}f(x) = \mathcal{M}^0 f(x) = \sup_{t > 0} \left|M_{t}^{0} f(x) \right|  = \sup_{t > 0} \left| \int_K f(u k a_{t} 1) \, dk  \right|.
$$
Then 
\begin{corollary}
    $\mathcal{M}$ is a bounded operator on $L^p(G/K)$ for $p>\frac{d}{d-1}.$
\end{corollary}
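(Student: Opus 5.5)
Take $\mu=0$ in Theorem \ref{main-thm}. We already know that for $\mu=0$ the analytically continued operator $M_t^0$ is exactly the spherical mean $f\mapsto\int_K f(uka_t\cdot 1)\,dk$. Hence $\mathcal M=\mathcal M^0$, and it suffices to find the $p$ for which $\operatorname{Re}\mu=0$ satisfies the hypotheses of Theorem \ref{main-thm}.

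For $1<p\le 2$, part (1) requires $0>1-d+\frac dp$. This is equivalent to $\frac dp<d-1$, that is, $p>\frac{d}{d-1}$. For $d\ge 3$ we have $\frac{d}{d-1}<2$, so part (1) gives boundedness for $\frac{d}{d-1}<p\le 2$.

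For $2\le p\le\infty$, part (2) requires $0>\frac{2-d}{p}$, which holds precisely when $d>2$. So for $d\ge3$ it gives boundedness for every $p\in[2,\infty]$. Combining the two ranges yields all $p>\frac{d}{d-1}$ when $d\ge 3$.

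The main obstacle is $d=2$. Then $\frac{d}{d-1}=2$, and part (2) fails at $\mu=0$, since it needs $\operatorname{Re}\mu>0$. The only rank-one symmetric space with $d=2$ is the real hyperbolic plane ($m_1=1$, $m_2=0$), and Theorem \ref{main-thm} alone does not cover it. For this case I would invoke Ionescu's theorem quoted in the introduction, which states that $\sup_{\tau\ge0}|f*d\sigma_\tau|$ is bounded on $L^p(X)$ for $p>\frac{n}{n-1}$, including $n=2$ and $p>2$. Alternatively, I would restrict the corollary to $d\ge3$ and cite Ionescu for $d=2$. With either choice the corollary holds for every $p>\frac{d}{d-1}$.
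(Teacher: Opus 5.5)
Your route is the paper's: the corollary is recorded without a separate proof, as the case $\mu=0$ of Theorem~\ref{main-thm}. Your range computation for $d\ge 3$ and $\frac{d}{d-1}<p<\infty$ is exactly that specialization.

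There is one slip at $p=\infty$. There the condition in part (2) reads $\operatorname{Re}\mu>0$, so $\mu=0$ is not covered. Your assertion that $0>\frac{2-d}{p}$ ``holds precisely when $d>2$'' is therefore false at that endpoint. The case $p=\infty$ is nevertheless trivial: each $d\sigma_t$ is a probability measure, so $\|\mathcal M f\|_\infty\le\|f\|_\infty$. You should state this directly rather than attribute it to the theorem.

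Your treatment of $d=2$ is correct and more careful than the paper's. For $d=2$, part (1) at $\mu=0$ would need $p>2$ while $p\le 2$, and part (2) would need $0>0$, so Theorem~\ref{main-thm} gives nothing. Moreover, the paper's identification of $M_t^0$ with the spherical mean is only carried out under $m_1+m_2>1$. Thus the statement is a genuine consequence of the paper's theorem only for $d\ge 3$. For $d=2$ you do need an external input such as Ionescu's theorem. The reason is that the paper's $L^2$--$L^\infty$ interpolation only yields open conditions on $\operatorname{Re}\mu$, and when $d=2$ (so $\alpha_0=0$) their boundary is exactly $\mu=0$.
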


Now we will prove the necessary condition. The proof is mostly in line of proof of Chen--Shen--Wang--Yan.
\begin{proof}[Proof of Theorem \ref{thm:necessary}]
    Let $\Gamma$ be the cone vertexed at $eK$ and tangent to the ball $B_{3c_1}(a_{1/2})$, where $c_1$ is a constant to be determined.
    For sufficiently small $\delta>0$, define $f_\delta$ by
    $$
        f_\delta(y)= \begin{cases}
        \dfrac{|y|^{1-d-\mu}}{-\log |y|} & \text{in } (B_{1/2}(eK) \setminus B_\delta(eK)) \cap \Gamma, \\[2ex]
        0 & \text{outside } (B_{1/2}(eK) \setminus B_\delta(eK)) \cap \Gamma.
        \end{cases}
    $$
    We want to estimate $ M^\mu_{|x|}(f_\delta)(x)$ for $x \in B_{c_1}(a_{1/2})$. To do this, we let $\widetilde{S}_x \subset \Gamma$ be a cone vertexed at $eK$ and tangent to $B_{c_1}(x)$. Set $S_x = B_{1/10}(eK) \cap \widetilde{S}_x$. We choose $c_1$ sufficiently small such that for all sufficiently small $\delta > 0$ and $x \in B_{c_1}(a_{1/2})$, $S_x \cap \text{supp} f_\delta$ is contained in the interior of $B_{|x|}(x)$, and that for $y \in S_x \cap \text{supp} f_\delta$ we have
    \bes
        |x| - d(x,y) \sim |y| >0.
    \ees
    Therefore, for such $x$ we have
    \beas
        | M_{|x|}^\mu(f_\delta)(x)|
        &\ge& C_\mu \int_{S_x \cap \text{ supp } f_\delta} (\cosh 2|x| - \cosh 2d(x, y))^{\mu-1} \, \frac{|y|^{1-d-\mu}}{-\log|y|} \, dy \\
        &\ge& C_\mu \int_\delta^{1/10} r^{\mu-1} \frac{r^{1-d-\mu}}{-\log r} r^{d-1} \, dr \\
        &\ge& C_\mu \log \log(1/\delta)
    \eeas
    using polar coordinates. Thus,
    \bes
        \|\mathcal M^\mu(f_\delta)\|_p \ge \left( \int_{B_{c_1}(a_{1/2})} |M_{|x|}^\mu(f_\delta)(x)|^p \, dx \right)^{1/p} \ge C_\mu \log \log(1/\delta).
    \ees
    Now,
    \beas
        \|f_\delta\|_p^p &=& \int_{(B_{1/2}(eK) \setminus B_\delta(eK)) \cap \Gamma} \dfrac{|y|^{(1-d-\mu)p}}{(-\log |y|)^p} \, dy\\
        &\leq & \int_{\delta}^{1/2} \dfrac{r^{(1-d-\mu)p}}{(-\log r)^p} \, r^{d-1} \, dr \\
       &=& \int_{\delta}^{1/2} \dfrac{r^{\beta}}{(-\log r)^p} \, dr
    \eeas
    using polar coordinates and where $\beta = (1-d-\mu)p + d -1$. If $\mu \le 1 - d + \frac{d}{p}$, then $\beta \ge -1$. So if $\beta > -1$,
    \bes
        \int_{\delta}^{1/2} \dfrac{r^{\beta}}{(-\log r)^p} \, dr
        \le \frac{1}{(\log 2)^{p}} \int_{0}^{1/2} r^\beta \, dr < \infty
    \ees
    and if $\beta = -1$, using the substitution $-\log r = z$, we have
    \bes
        \int_{\delta}^{1/2} \dfrac{1}{r(-\log r)^p} \, dr
        =  \int_{\log 2}^{-\log \delta}  \dfrac{dz}{z^p}
        \leq \int_{\log 2}^{\infty}  \dfrac{dz}{z^p} < \infty \quad (\text{ as } p > 1).
    \ees
    Now if $\mu > 1 - d + \frac{d}{p}$, then $\beta < -1$. So,
    \bes
        \int_{\delta}^{1/2} \dfrac{r^{\beta}}{(-\log r)^p} \, dr
        \le \frac{1}{(\log 2)^{p}} \int_{\delta}^{1/2} r^{\beta} \, dr
        \le C \, \delta^{\beta +1}.
    \ees
    Therefore,
    \bes
        \|f_\delta\|_p \le
        \begin{cases}
            C_\mu & \text{ for } \mu \leq 1 - d + \frac{d}{p}, \\[2ex]
            C_\mu \, \delta ^{1-d-\mu + \frac{d}{p}} & \text{ for } \mu > 1 - d + \frac{d}{p}.
        \end{cases}
    \ees
Therefore, \bes \|\mathcal M^\mu(f_\delta) \|_p \leq C\, \|f_\delta\|_p, \ees for all $\delta>0$, implies  \bes \mu > 1 - d + \frac{d}{p}.\ees
\end{proof}

\begin{remark}
\begin{enumerate}
    \item  It would be interesting to investigate whether the range of $\mu$ for which $\mathcal M^\mu$ is bounded can be extended in the case $2<p\leq\infty$, as in the work of Chen--Shen--Wang--Yan \cite{ChenShenWangYan2025}. 
    \item It would also be interesting to study the boundedness of the lacunary spherical maximal operator associated with parameter $\mu$.
\end{enumerate}
We are working on these to address this questions in a forthcoming work.
\end{remark}

\section*{Acknowledgements}
First author is supported by the PhD fellowship of IIT Bombay. Second author is supported partially by the ANRF grant no. ANRF/ARGM/2025/000319/MTR.

\end{document}